\newtheorem{theorem}{Theorem}[section]
\newtheorem{proposition}[theorem]{Proposition}
\newtheorem{lemma}[theorem]{Lemma}
\theoremstyle{definition}
\newtheorem{exercise}[theorem]{Exercise}
\theoremstyle{remark}
\numberwithin{equation}{section}
\newcommand{\mcg}{\mathrm{Mod}_g}
\renewcommand{\tt}{\mathcal{T}_g}
\newcommand{\ml}{\mathcal{ML}_g}
\newcounter{count}
\newcounter{counterk} 
\newcounter{counterl} 
\newcounter{counterc} 
\newcounter{countercc} 
\newcounter{countere} 
\newcounter{counterd} 
\newcounter{countern} 
\newcounter{counterr}
\begin{document}

\title[Counting problems from the viewpoint of ergodic theory]{Counting problems from the viewpoint of ergodic theory: from primitive integer points to simple closed curves}

\author{Francisco Arana--Herrera}
\address{Institute for Advanced Study, 1 Einstein Drive, Princeton, NJ 08540, USA.}
\email{farana@ias.edu}

\begin{abstract}
	In her thesis, Mirzakhani showed that the number of simple closed geodesics of length $\leq L$ on a closed, connected, oriented hyperbolic surface $X$ of genus $g$ is asymptotic to $L^{6g-6}$ times a constant depending on the geometry of $X$. In this survey we give a detailed account of Mirzakhani’s proof of this result aimed at non-experts. We draw inspiration from classic primitive lattice point counting results in homogeneous dynamics. The focus is on understanding how the general principles that drive the proof in the case of lattices also apply in the setting of hyperbolic surfaces. 
\end{abstract}

\maketitle


\thispagestyle{empty}

\tableofcontents

\section{Introduction}

Let us begin our discussion by recalling a classic result in number theory: the prime number theorem. According to this theorem, the number $\pi(N)$ of positive prime integers $p$ such that $p \leq N$ satisfies the following asymptotic estimate as $N \to \infty$,
\[
\pi(N) \sim \frac{N}{\log(N)},
\]
where the symbol $\sim$ represents the fact that the following identity holds
\[
\lim_{N \to \infty} \frac{\pi(N)}{N / \log (N)} = 1.
\]

Analogous results also hold in the more geometric setting of hyperbolic surfaces.  A closed geodesic on a hyperbolic surface is said to be primitive if it cannot be represented as a concatenation of multiple copies of a shorter closed geodesic. Given a closed, oriented hyperbolic surface $X$ and a parameter $L > 0$, denote by $c(X,L)$ the number of non-oriented, primitive closed geodesics on $X$ of length $\leq L$. The prime geodesic theorem states that, for any closed, connected, oriented hyperbolic surface $X$, the following asymptotic estimate holds as $L \to \infty$,
\[
c(X,L) \sim \frac{e^L}{2L}.
\]
This result shows in particular that the asymptotics of $c(X,L)$ as $L \to \infty$ do not depend on the geometry of $X$ nor on its topology. The first proof of this theorem was given by Huber and Selberg using analytic methods. See \cite[Chapter 9]{Bus92} for a detailed discussion of this proof.

In the more geometric setting of hyperbolic surfaces we can push our curiosity even further. A closed geodesic on a hyperbolic surface is said to be simple if it does not intersect itself. Given a closed, connected, oriented hyperbolic surface $X$ and a parameter $L >0 $, denote by $s(X,L)$ the number of simple closed geodesics on $X$ of length $\leq L$. What are the asymptotics of this quantity as $L \to \infty$? Do they depend on the geometry of $X$? Do they depend on the topology of $X$?

These questions, which a priori seem very similar to the ones answered by the prime geodesic theorem, remained out of reach for quite a long time. The analytic perspective of Huber and Selberg was not of much use in this setting as it could not distinguish simple closed geodesics among primitive ones. Some progress towards answering these questions was made by Birman and Series \cite{BS85}, McShane and Rivin \cite{MR95a,MR95b}, and Rivin \cite{R01}. The first major breaktrough would come through the work of Mirkzakhani, who, in her thesis \cite{Mir04}, proved the following outstanding theorem.

\begin{theorem} \cite[Theorem 1.1]{Mir08b}
	\label{theo:main}
	Let $X$ be a closed, connected, oriented hyperbolic surface of genus $g \geq 2$. Then, there exists $s(X) > 0$ such that the following asymptotic estimate holds as $L \to \infty$,
	\[
	s(X,L) \sim s(X) \cdot L^{6g-6}.
	\]
\end{theorem}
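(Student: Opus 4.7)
The plan is to exploit that, up to the action of $\mcg$ on $\ml$, the simple closed curves on $X$ fall into finitely many topological types and that each type constitutes a single $\mcg$-orbit of integer points in $\ml$. Thus $s(X,L)$ decomposes as a finite sum of orbit counts of the form $\#\{\gamma \in \mcg \cdot \gamma_0 : \ell_X(\gamma) \leq L\}$, where $\ell_X$ is the hyperbolic length function extended continuously and homogeneously of degree one to $\ml$. I would first set up this reformulation carefully, using the change of coordinates principle to identify orbits of topological types, and verifying that the passage through $\ml$ is compatible with primitivity.

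Next, I would reframe the problem as a measure convergence statement, in direct analogy with the primitive lattice point count alluded to in the introduction. Because $\ell_X$ is homogeneous of degree one on the $(6g-6)$-dimensional space $\ml$, the natural rescaling $\gamma \mapsto \gamma/L$ suggests introducing the counting measures
\[
\mu_L^X := \frac{1}{L^{6g-6}} \sum_{\gamma \in \mcg \cdot \gamma_0} \delta_{\gamma/L}
\]
on $\ml$. The desired asymptotic is equivalent to weak-star convergence $\mu_L^X \to c(\gamma_0) \cdot \mu_{\mathrm{Thu}}$ for some constant $c(\gamma_0) > 0$, where $\mu_{\mathrm{Thu}}$ denotes Thurston's piecewise linear volume on $\ml$; evaluating both sides against the indicator of the unit ball $B_X := \{ \lambda \in \ml : \ell_X(\lambda) \leq 1 \}$ would then yield Theorem \ref{theo:main} with $s(X)$ equal to $\mu_{\mathrm{Thu}}(B_X)$ times a sum of the constants $c(\gamma_0)$ over topological types.

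The heart of the matter is establishing this equidistribution. Following the homogeneous dynamics template, I would test $\mu_L^X$ against a continuous compactly supported function $\varphi$ on $\ml$ and unfold the resulting sum to an integral over the moduli space $\mm = \tt / \mcg$, rewriting $\int \varphi \, d\mu_L^X$ as an integral of Thurston volumes of rescaled balls $B_Y$ against a Mirzakhani-type measure on $\mm$ as $Y$ varies; taking $L \to \infty$ and invoking averaging would identify the limit. The constant $c(\gamma_0)$ should then appear as the ratio of the Mirzakhani integral of $Y \mapsto \mu_{\mathrm{Thu}}(B_Y)$ to the Weil--Petersson volume of $\mm$.

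The main obstacle will be the underlying equidistribution of the $\mcg$-orbit on $\ml$, since $\ml$ is not a homogeneous space and $\mcg$ is not a Lie group, so standard mixing of unipotent flows does not apply directly. The substitute, and the technically deepest ingredient, is Masur's theorem that $\mcg$ acts ergodically on $(\ml, \mu_{\mathrm{Thu}})$, which itself ultimately comes from the ergodicity of the $\mathrm{SL}(2,\mathbf{R})$-action on $\qum$ via the Hubbard--Masur correspondence between $\qut$ and pairs in $\ml \times \ml$. A further delicate point will be controlling escape of mass into the cusps of $\mm$, which is needed both to upgrade convergence against continuous compactly supported functions to convergence against the non-continuous indicator of $B_X$ and to rule out accidental loss of orbit points in the limit.
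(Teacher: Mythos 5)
Your overall strategy matches the paper's: decompose $s(X,L)$ by topological type (equation \eqref{eq:sum}), convert each orbit count into a counting measure $\mu_L^{\gamma_0}$ on $\ml$, pin down weak-$\star$ limit points via Masur's ergodicity theorem (Theorem \ref{theo:thu_erg}), and then extract the constant by averaging over $\mathcal{M}_g$ with Mirzakhani's integration formula. However, the way you assemble the constant at the end is muddled in a way that would unravel the proof if carried out as written.

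You claim $c(\gamma_0)$ appears as the ratio of $\int_{\mathcal{M}_g}\mu_{\mathrm{Thu}}(B_Y)\,d\widehat\mu_{\mathrm{wp}}(Y)$ to the Weil--Petersson volume of $\mathcal{M}_g$ --- but that quantity does not depend on $\gamma_0$, which is impossible given that separating and non-separating curves have different frequencies (Theorem \ref{theo:main_2}). The correct bookkeeping is that two \emph{different} averages are being matched. On the one hand Mirzakhani's integration formula (Exercises \ref{ex:mir_int} and \ref{ex:freq}) shows that $P(\gamma_0,L) := \int_{\mathcal{M}_g} s(X,\gamma_0,L)\,d\widehat\mu_{\mathrm{wp}}(X)$ is a polynomial in $L$ of degree $6g-6$ whose leading coefficient is, by definition, $c(\gamma_0)$; this depends on $\gamma_0$ through the Weil--Petersson volume polynomials of the pieces of $S_g$ cut along $\gamma_0$. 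On the other hand, if $\mu_L^{\gamma_0}\to c\cdot\mu_{\mathrm{Thu}}$, then $s(X,\gamma_0,L)/L^{6g-6}=\mu_L^{\gamma_0}(B_X)\to c\cdot B(X)$ for $\widehat\mu_{\mathrm{wp}}$-a.e.\ $X$, and integrating over $\mathcal{M}_g$ via dominated convergence gives $c\cdot b_g$ where $b_g := \int_{\mathcal{M}_g} B(X)\,d\widehat\mu_{\mathrm{wp}}(X)$. Equating the two computations gives $c = c(\gamma_0)/b_g$, so the final constant for a single orbit is $c(\gamma_0)\,B(X)/b_g$ and $s(X)$ is $B(X)/b_g$ times $\sum_{\gamma_0} c(\gamma_0)$. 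Separately, you conflate two distinct technical issues: passing from weak-$\star$ convergence to the non-open set $B_X$ is handled by Portmanteau's theorem together with $\mu_{\mathrm{Thu}}(\partial B_X)=0$ (which follows from homogeneity of $\ell_X$, Exercise \ref{eq:boundary}), and has nothing to do with cusps of $\mathcal{M}_g$; the escape-of-mass/integrability bound $s(X,\gamma_0,L)\le C L^{6g-6}u(X)$ with $u\in L^1(\widehat\mu_{\mathrm{wp}})$ (Exercise \ref{ex:int_strong}) is what justifies the dominated convergence step in the averaging argument, not the step to indicators.
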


Mirzakhani's proof of Theorem \ref{theo:main} uses ergodic theory in a crucial way. Her proof also makes important use of a couple of other breakthroughs of herself: her famous formulas for the total Weil-Petersson volumes of moduli spaces and her famous integration formulas over moduli space. 

The tools developed by Mirzakhani in her thesis have broad-ranging consequences in many fields of mathematics. Let us highlight the following remarkably concrete consequence of her work.

\begin{theorem}
	\cite[Corollary 1.4]{Mir08b}
	\label{theo:main_2}
	On any closed, oriented hyperbolic surface of genus $2$ it is $48$ times more likely for a random long simple closed geodesic to be non-separating rather than separating.
\end{theorem}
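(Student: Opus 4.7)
The plan is to refine Theorem \ref{theo:main} topological-type by topological-type, and then evaluate the ratio of leading coefficients using Mirzakhani's integration formula and her Weil--Petersson volume polynomials.

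First I would prove the following orbit-refined analog of Theorem \ref{theo:main}: for any simple closed curve $\gamma_0$ on $S_g$, the number $s_{\gamma_0}(X,L)$ of simple closed geodesics on $X$ of length $\leq L$ that lie in the $\mcg$-orbit of $\gamma_0$ satisfies
\[
s_{\gamma_0}(X, L) \sim \frac{n(\gamma_0) \cdot B(X)}{b_g} \cdot L^{6g-6}
\]
as $L \to \infty$, where $B(X)$ is the Thurston measure of the unit ball in $\ml$ at $X$, $b_g = \int_{\mm} B(X) \, dV_{WP}(X)$, and $n(\gamma_0) > 0$ depends only on the topological type of $\gamma_0$. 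This is really the heart of the statement: following Mirzakhani, the proof of Theorem \ref{theo:main} naturally proceeds orbit-by-orbit and is then summed over the finitely many topological types of simple closed curves on $S_g$. The clean factorization isolates the $X$-dependence into the single factor $B(X)/b_g$, which is the same for every $\gamma_0$.

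In genus $2$ there are exactly two $\mcg$-orbits of essential simple closed curves: the non-separating orbit, represented by some $\gamma_{ns}$, and the separating orbit, represented by a curve $\gamma_{s}$ cutting $S_2$ into two genus-one one-holed subsurfaces. Hence the probability ratio of non-separating versus separating simple closed geodesics in the ball of radius $L$ tends to $n(\gamma_{ns})/n(\gamma_{s})$, which is independent of $X$. To compute these constants I would integrate the orbit-refined asymptotic over $\mm$ and apply Mirzakhani's integration formula, which in this single-curve case reads
\[
\int_{\mm} s_{\gamma_0}(X, L) \, dV_{WP}(X) = \frac{1}{|\mathrm{Sym}(\gamma_0)|} \int_0^L \ell \cdot V(S_g(\gamma_0), \ell) \, d\ell,
\]
where $S_g(\gamma_0)$ is the (possibly disconnected) surface obtained by cutting $S_g$ along $\gamma_0$ and $V(S_g(\gamma_0), \ell)$ is the total Weil--Petersson volume of its moduli space with each boundary component of length $\ell$. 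For $\gamma_{ns}$, $S_2(\gamma_{ns})$ has signature $(1,2)$ and is connected, so the relevant polynomial is $V_{1,2}(\ell, \ell)$. For $\gamma_{s}$, $S_2(\gamma_{s})$ is a disjoint union of two surfaces of signature $(1,1)$, so the relevant polynomial is $V_{1,1}(\ell)^2$. The two symmetry factors are equal (both arising from an involution that swaps either the two sides of the curve or the two components of the complement), so they cancel in the ratio we care about.

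Plugging in Mirzakhani's polynomials
\[
V_{1,1}(\ell) = \frac{\ell^2 + 4\pi^2}{48}, \qquad V_{1,2}(\ell_1, \ell_2) = \frac{(\ell_1^2 + \ell_2^2 + 4\pi^2)(\ell_1^2 + \ell_2^2 + 12\pi^2)}{192},
\]
retaining only the leading power of $\ell$ in each integrand (which is all that contributes to the $L^{6g-6} = L^6$ asymptotic), one finds leading terms $\ell^5/48$ and $\ell^5/2304$, whose integrals from $0$ to $L$ are $L^6/288$ and $L^6/13824$ respectively. Taking the ratio gives $n(\gamma_{ns})/n(\gamma_{s}) = 13824/288 = 48$, which is exactly Theorem \ref{theo:main_2}. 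The main obstacle in this plan is the first step: justifying the orbit-refined asymptotic with the clean factorization $n(\gamma_0) \cdot B(X)/b_g$. This needs essentially the full proof of Theorem \ref{theo:main}, most crucially the equidistribution of long $\mcg$-orbits of simple closed curves in $\ml$ with respect to the Thurston measure. Once this is in place, the rest is the short polynomial computation indicated above.
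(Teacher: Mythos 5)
Your proposal is correct and follows essentially the same route as the paper: establish the orbit-refined asymptotic $s(X,\gamma_0,L)\sim c(\gamma_0)B(X)L^{6g-6}/b_g$ (Exercise \ref{ex:main_6}), compute the frequencies $c(\gamma_0)$ via Mirzakhani's integration formula applied to $V_{1,2}$ and $V_{1,1}^2$ (Exercises \ref{ex:VV} and \ref{ex:freq_2}), and take the ratio $27648/576=48$. The one place where you gloss is the claim that "the two symmetry factors are equal": the net prefactor is indeed $1/2$ in both cases, but it arises for slightly different reasons—for the non-separating curve purely from the orientation-reversing element in $\mathrm{Stab}(\gamma_{ns})/\mathrm{Stab}_0(\gamma_{ns})$, while for the separating curve it is the product of three contributions ($\sigma_2(\gamma_s)=2$, the factor $2^{-\rho}=1/2$ from the hyperelliptic involutions on the two one-holed tori, and the index-$2$ side swap)—and your heuristic conflates these, though the final numerical equality holds.
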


The main goal of this survey is to give a detailed account of Mirzakhani’s proof of Theorem \ref{theo:main} aimed at non-experts. We will draw inspiration from classic primitive lattice point counting results in homogeneous dynamics. Although we will cover the necessary background, the focus will be on understanding how the general principles that drive the proof in the case of lattices also apply in the setting of hyperbolic surfaces. In particular, we will take for granted several fundamental results about Teichmüller spaces and mapping class groups and focus on understanding their applications.

\subsection*{Organization of this survey.} In \S2 we study counting problems for primitive lattice points in the Euclidean plane. This discussion will later guide the proof of Theorem \ref{theo:main}. In \S3 we cover the background material on hyperbolic surfaces, Teichmüller spaces, and simple closed curves needed to understand the proof of Theorem \ref{theo:main}. In \S4 we discuss Mirzakhani's famous formulas for the total Weil-Petersson volumes of moduli spaces and her famous integration formulas over moduli space. In \S5 we give a complete proof of Theorem \ref{theo:main}. In \S 6 we give a brief overview of several counting results for closed curves on surfaces and other related objects that have been proved since the debut of Mirkzakani's thesis.

\subsection*{Other surveys.} For a review of similar topics from a more algebro-geometric perspective see \cite{Wol13}. For a survey covering the full range of Mirzakhani's outstanding research beyond the few results discussed here see \cite{Wri19}. For discussions of geodesic counting theorems from the point of view geodesic currents see \cite{EU18} and \cite{ES20}.

\subsection*{Acknowledgments.} The author is very grateful to Alex Wright and Steve Kerckhoff for their invaluable advice, patience, and encouragement. The author would also like to thank Alex Wright, Anton Zorich, and Howard Masur for their helpful comments on an earlier version of this survey. This survey got started as a set of notes for a minicourse taught by the author at the CMI-HIMR Dynamics and Geometry Online Summer School. The author is very grateful to Viveka Erlandsson, John Mackay, and Jens Marklof for giving him the chance to teach this minicourse. This survey was finished while the author was a member of the Institute for Advanced Study (IAS). The author is very grateful to the IAS for its hospitality. This material is based upon work supported by the National Science Foundation under Grant No. DMS-1926686.

\section{Counting primitive lattice points in the Euclidean plane}

\subsection*{Outline of this section.} In this section we study counting problems for primitive lattice points in the Euclidean plane. The techniques introduced in this section will serve as a rough guide for the proof of Theorem \ref{theo:main} that will be discussed in \S 5. 

\subsection*{Counting primitive integer points.} Consider the integer lattice $\mathbf{Z}^2 \subseteq \mathbf{R}^2$. In analogy with the definition of prime numbers, a vector $v \in \mathbf{Z}^2$ is said to be primitive if it cannot be written as a non-negative integer multiple of another vector in $\mathbf{Z}^2$. Equivalently, a vector $v = (a,b) \in \mathbf{Z}^2$ is said to be primitive if the greatest common divisor of $a$ and $b$ is $1$. Denote by $\smash{\mathbf{Z}^2_\mathrm{prim}} \subseteq \mathbf{Z}^2$ the subset of all primitive vectors of $\mathbf{Z}^2$. See Figure \ref{fig:primitive_lattice}. Notice $\smash{\mathbf{Z}^2_\mathrm{prim}} \subseteq \mathbf{Z}^2$ is not a sublattice, not even a subgroup. Denote by $\|\cdot \|$ the Euclidean norm on $\mathbf{R}^2$. For every $L >0$ consider the counting function
\begin{equation}
\label{eq:prim}
p\left(\mathbf{Z}^2,L \right) := \# \{v \in \mathbf{Z}^2_\mathrm{prim}  \colon \| v \| \leq L \},
\end{equation}

\begin{figure}[h!]
	\centering
	\includegraphics[width=.27\textwidth]{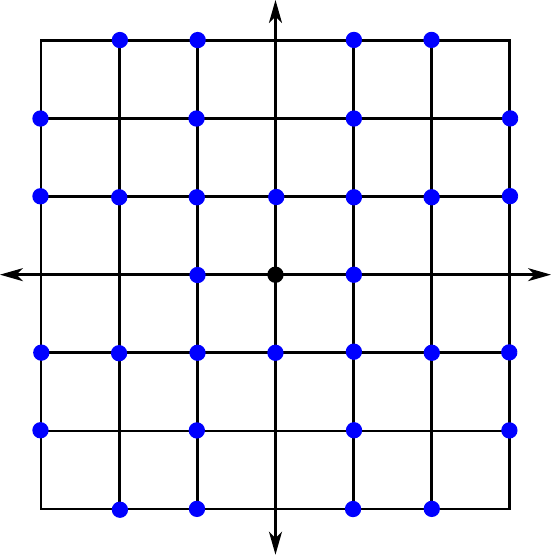}
	\vspace{+.2cm}
	\caption{Primitive vectors of the integer lattice $\mathbf{Z}^2$.} \label{fig:primitive_lattice} 
\end{figure}

Just as in the case of the counting function $\pi(N)$ introduced in \S 1, we are interested in the asymptotics of $p(\smash{\mathbf{Z}^2},L)$ as $L \to \infty$. The main goal of this section is to discuss a proof of the following asymptotic estimate for the counting function $p(\smash{\mathbf{Z}^2},L)$.

\begin{theorem}
	\label{theo:main_lec_1}
	The following asymptotic estimate holds as $L \to \infty$,
	\[
	p\left(\mathbf{Z}^2,L\right) \sim \frac{6}{\pi} \cdot L^2.
	\]
\end{theorem}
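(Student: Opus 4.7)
The plan is to reduce the primitive count $p(\mathbf{Z}^2,L)$ to the much easier count of all nonzero lattice points in the disk of radius $L$ via Möbius inversion, and then invoke the classical Gauss circle estimate. This strategy is, in spirit, the same one that reappears later in the survey: one compares a ``primitive'' counting function to a ``total'' counting function whose asymptotics are more tractable.

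First I would exploit the fact that every $v \in \mathbf{Z}^2 \setminus \{0\}$ admits a unique factorization $v = k \cdot w$ with $k \in \mathbf{Z}_{\geq 1}$ and $w \in \mathbf{Z}^2_\mathrm{prim}$, namely $k = \gcd(|a|,|b|)$ for $v = (a,b)$. Setting
\[
n\left(\mathbf{Z}^2,L\right) := \#\left\{v \in \mathbf{Z}^2 \setminus \{0\} \colon \|v\| \leq L \right\},
\]
this factorization gives the identity $n(\mathbf{Z}^2, L) = \sum_{k=1}^{\lfloor L \rfloor} p(\mathbf{Z}^2, L/k)$, and Möbius inversion then yields
\[
p\left(\mathbf{Z}^2, L\right) = \sum_{k=1}^{\lfloor L \rfloor} \mu(k) \cdot n\left(\mathbf{Z}^2, L/k\right).
\]

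Second, I would invoke the Gauss circle estimate $n(\mathbf{Z}^2, L) = \pi L^2 + O(L)$, obtained by comparing the area of the Euclidean disk of radius $L$ to the area of the union of unit squares centered at the lattice points it contains. Plugging this into the Möbius inversion formula gives
\[
p\left(\mathbf{Z}^2, L\right) = \pi L^2 \sum_{k=1}^{\lfloor L \rfloor} \frac{\mu(k)}{k^2} + O\!\left(\sum_{k=1}^{\lfloor L \rfloor} \frac{L}{k}\right) = \pi L^2 \sum_{k=1}^{\lfloor L \rfloor} \frac{\mu(k)}{k^2} + O(L \log L).
\]
Letting $L \to \infty$ and using the classical evaluation $\sum_{k=1}^{\infty} \mu(k)/k^2 = 1/\zeta(2) = 6/\pi^2$, the main term becomes $\pi L^2 \cdot (6/\pi^2) = (6/\pi) \cdot L^2$, which is the claimed asymptotic.

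The main obstacle is controlling the aggregated error: each term $n(\mathbf{Z}^2, L/k)$ contributes an $O(L/k)$ error, and summing these over $k \leq L$ gives only the $O(L\log L)$ bound above, which is fortunately of strictly lower order than $L^2$. The crude Gauss bound therefore suffices, and no sharper circle estimate is needed to extract the leading-order asymptotic; the minor subtlety of exchanging limit and partial Möbius sum is handled by dominated convergence, since $\sum_k |\mu(k)|/k^2$ converges absolutely.
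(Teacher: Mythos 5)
Your proof is correct, and it is also genuinely different from the one the paper has in mind. You use the classical elementary argument: the factorization $v = k \cdot w$ with $w$ primitive gives $n(\mathbf{Z}^2,L) = \sum_{k \geq 1} p(\mathbf{Z}^2, L/k)$, you invert via M\"obius, and you feed in the Gauss circle estimate $n(\mathbf{Z}^2,L) = \pi L^2 + O(L)$ to get $p(\mathbf{Z}^2,L) = (6/\pi)L^2 + O(L\log L)$. All the steps check out; in fact you get a quantitative error term for free, which the paper's proof does not yield.

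The paper, by contrast, deliberately avoids this elementary route and takes a soft, ergodic-theoretic one: it rescales $\mathbf{Z}^2_\mathrm{prim}$ to form counting measures $\nu_L^\mathrm{prim}$ on $\mathbf{R}^2$, observes that $\mathbf{Z}^2_\mathrm{prim}$ is the $\mathrm{SL}(2,\mathbf{Z})$-orbit of $(1,0)$ so that every weak-$\star$ limit point of $(\nu_L^\mathrm{prim})_L$ is $\mathrm{SL}(2,\mathbf{Z})$-invariant, then invokes ergodicity of Lebesgue under $\mathrm{SL}(2,\mathbf{Z})$ (Theorem \ref{theo:sl_ergodic}) plus an absolute-continuity estimate to conclude every such limit is $c\cdot\nu$ for some constant $c\geq 0$. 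The constant is then pinned down by a Siegel-type averaging over the modular curve $\mathcal{M}_1$ (Proposition \ref{prop:siegel_basic}), via Portmanteau and dominated convergence. This is more machinery for a weaker conclusion (no error term), but it is chosen precisely because each ingredient has an analogue in the Teichm\"uller setting of Theorem \ref{theo:main}: the $\mathrm{SL}(2,\mathbf{Z})$-orbit becomes a $\mathrm{Mod}_g$-orbit in $\mathcal{ML}_g$, Lebesgue becomes the Thurston measure, ergodicity is Masur's theorem, and Siegel's formula becomes Mirzakhani's integration formula. Your M\"obius-inversion scheme has no such analogue: there is no tractable ``count of all integral simple closed multi-curves'' that can be M\"obius-inverted to extract a single mapping-class-group orbit, so the elementary proof, while cleaner for $\mathbf{Z}^2$, would leave you without a toolkit for the hyperbolic case. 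Finally, a small quibble: your suggestion that this primitive-versus-total comparison is ``the same strategy that reappears later in the survey'' is not quite right --- the later comparison is between a discrete $\mathrm{Mod}_g$-orbit and the continuous Thurston measure, not between primitive and non-primitive discrete objects.
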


To prove Theorem \ref{theo:main_lec_1} we begin by rewriting the counting function $p(\mathbf{Z}^2,L)$ in a more convenient way. More concretely, notice that for every $L > 0$,
\begin{align*}
	p(\smash{\mathbf{Z}^2},L) &= \# \{v \in \smash{\mathbf{Z}^2_\mathrm{prim}} \colon \| v \| \leq L \} \\
	&= \# \left\lbrace v \in \smash{\mathbf{Z}^2_\mathrm{prim}}  \colon \| \textstyle\frac{1}{L} \cdot v \| \leq 1 \right\rbrace \\
	&= \# \left\lbrace v \in \textstyle\frac{1}{L} \cdot \smash{\mathbf{Z}^2_\mathrm{prim}}  \colon \| v \| \leq 1 \right\rbrace.
\end{align*}
It is natural then to study the rescaled lattice $\smash{\frac{1}{L} \cdot \mathbf{Z}^2_\mathrm{prim}} \subseteq \mathbf{R}^2$ as $L \to \infty$. We do this by means of a measure theoretic approach. For every $L > 0$ consider the counting measure on $\mathbf{R}^2$ given by
\[
\nu_L^\mathrm{prim}  := \frac{1}{L^2} \cdot \sum_{v \in \mathbf{Z}_\mathrm{prim}^2} \delta_{\frac{1}{L} \cdot v}.
\]
Notice that, if $B \subseteq \mathbf{R}^2$ denotes the unit ball centered at the origin, then
\[
\nu_L^\mathrm{prim}(B) = \frac{p\left(\smash{\mathbf{Z}^2},L\right)}{L^2}.
\]
This setup reduces the original problem of proving an asymptotic estimate for the counting function $p\left(\smash{\mathbf{Z}^2},L\right)$  to the problem of understanding the asymptotic behaviour of the measures $\nu_L^\mathrm{prim}$ as $L \to \infty$.

\subsection*{The Lebesgue measure.} To develop some intuition, let us first consider a much simpler family of counting measures on $\mathbf{R}^2$. For every $L > 0$ consider the counting measure $\nu_L$ on $\mathbf{R}^2$ given by
\begin{equation}
\label{eq:leb_count}
\nu_L  := \frac{1}{L^2} \cdot \sum_{v \in \mathbf{Z}^2} \delta_{\frac{1}{L} \cdot v}.
\end{equation}
Denote by $\nu$ the standard Lebesgue measure on $\mathbf{R}^2$. Based solely on geometric intuition, one expect that the following weak-$\star$ convergence of measures holds,
\begin{equation}
\label{eq:lim_1}
\lim_{L \to \infty} \nu_L  = \nu.
\end{equation}
More concretely, one expects that for every continuous, compactly supported function $f \colon \mathbf{R}^2 \to \mathbf{R}$, 
\begin{equation}
\label{eq:weak_def}
\lim_{L \to \infty}  \int_{\mathbf{R}^2} f \thinspace d\nu_L = \int_{\mathbf{R}^2} f \thinspace d \nu.
\end{equation}
The proof of (\ref{eq:lim_1}) will be left as an exercise towards the end of this section. See Exercise \ref{ex:lebesgue_lim}. Let us point out for the moment that the main idea behind the proof of (\ref{eq:lim_1}) is the fact that every weak-$\star$ limit point of the sequence of counting measures $(\nu_L)_{L > 0}$ is translation invariant. 

\begin{exercise}
	\label{ex:finite_index}
	What would happen with (\ref{eq:lim_1}) if we considered a finite index subgroup of $\mathbf{Z}^2$ instead of all $\mathbf{Z}^2$ in the definition of the counting measures $(\nu_L)_{L>0}$ in (\ref{eq:leb_count})?
\end{exercise}

\subsection*{Invariance of counting measures.} The discussion above suggests that to prove an analogue of (\ref{eq:lim_1}) for the counting measures $(\smash{\nu_L^\mathrm{prim}})_{L>0}$, one should try to understand the behavior of the weak-$\star$ limit points of this sequence. A priori, one has no reason to expect the limit points of this sequence will be translation invariant, as we will conclude a fortiori. Nevertheless, there is still a relevant notion of invariance present in this setting. Indeed, consider the discrete matrix group
\[
\mathrm{SL}(2,\mathbf{Z}) := \left\lbrace \left( \begin{array}{c c}
a & b \\ c & d
\end{array}\right) \in \mathrm{Mat}_{2\times2}(\mathbf{R})\ \bigg\vert \ a,b,c,d \in \mathbf{Z}, ab-cd = 1 \right\rbrace
\]
acting on $\mathbf{R}^2$ by linear transformations. The following crucial exercise will provide the notion of invariance we will use to study the counting measures $(\smash{\nu_L^\mathrm{prim}})_{L>0}$.

\begin{exercise}
	\label{ex:prim_orb}
	Show that the $\mathrm{SL}(2,\mathbf{Z})$ orbit of the vector $(1,0) \in \mathbf{R}^2$ is precisely $\mathbf{Z}^2_{\mathrm{prim}} \subseteq \mathbf{Z}^2$. Conclude that any weak-$\star$ limit point of the sequence $(\smash{\nu_L^\mathrm{prim}})_{L>0}$ is $\mathrm{SL}(2,\mathbf{Z})$-invariant. \textit{Hint: Use Bézout's identity for greatest common divisors.} 
\end{exercise}

Exercise \ref{ex:prim_orb} ensures that any weak-$\star$ limit point of the sequence $(\nu_L^\mathrm{prim})_{L>0}$ is $\mathrm{SL}(2,\mathbf{Z})$-invariant. Using ergodic theory we will show this property greatly constraints the possible weak-$\star$ limit points.

\subsection*{Ergodic theory.} Let $(X,\mathcal{A})$ be a measurable space and $G$ be a countable group acting on $(X,\mathcal{A})$ by measure preserving transformations. We say a $\sigma$-finite measure $\mu$ on $(X,\mathcal{A})$ is $G$-invariant if $\mu(g.A) = \mu(A)$ for every measurable subset $A \in \mathcal{A}$ and every $g \in G$. Furthermore, we say a $G$-invariant measure $\mu$ on $(X,\mathcal{A})$ is $G$-ergodic if it admits no non-trivial $G$-invariant subsets, that is, if $\mu(A) = 0$ or $\mu(X\setminus A) = 0$ for every $A \in \mathcal{A}$ such that $g.A = A$ for every $g \in G$. Equivalently, a $G$-invariant measure $\mu$ on $(X,\mathcal{A})$ is ergodic if every $G$-invariant measurable function is constant almost everywhere with respect to $\mu$. The following exercise characterizes $G$-invariant measures on $(X,\mathcal{A})$ that are absolutely continuous with respect to a $G$-ergodic measure.

\begin{exercise}
	\label{ex:ergodic}
	Let $(X,\mathcal{A})$ be a measurable space and $G$ be a countable group acting on $(X,\mathcal{A})$ by measure preserving transformations. Suppose that $\nu$ is a $G$-ergodic measure on $(X,\mathcal{A})$ and that $\mu$ is a $G$-invariant measure on $(X,\mathcal{A})$ that is absolutely continuous with respect to $\mu$. Show that $\mu$ is a non-negative constant multiple of $\nu$. \textit{Hint: Show that the Radon-Nikodym derivative of $\mu$ with respect to $\nu$ is $G$-invariant and use the $G$-ergodicity of $\nu$ to show this derivative is constant.}
\end{exercise}

In the setting of counting problems for primitive lattice points, the following ergodicity result will be relevant for us. Although we will not prove it in this survey, let us at least mention that this result can be proved using the ergodicity of the horocycle flow on the unit tangent of the modular curve $T^1\mathcal{M}_1 := \mathrm{SL}(2,\mathbf{Z}) \backslash \mathrm{SL}(2,\mathbf{R})$. 

\begin{theorem}
	\label{theo:sl_ergodic}
	The Lebesgue measure $\nu$ on $\mathbf{R}^2$ is ergodic with respect to the linear action of $\mathrm{SL}(2,\mathbf{Z})$.
\end{theorem}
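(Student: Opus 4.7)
The plan is to follow the hint and reduce the problem to the well-known ergodicity of the horocycle flow on $T^1\mathcal{M}_1 = \mathrm{SL}(2,\mathbf{Z}) \backslash \mathrm{SL}(2,\mathbf{R})$. Since the origin is $\mathrm{SL}(2,\mathbf{Z})$-invariant and has Lebesgue measure zero, it suffices to prove ergodicity on the punctured plane $\mathbf{R}^2 \setminus \{0\}$.

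First I would realize this punctured plane as a homogeneous space. The linear action of $\mathrm{SL}(2,\mathbf{R})$ on $\mathbf{R}^2 \setminus \{0\}$ is transitive, and the stabilizer of $(1,0)$ is a one-parameter unipotent subgroup
\[
U = \left\lbrace \left(\begin{array}{cc} 1 & t \\ 0 & 1 \end{array}\right) : t \in \mathbf{R} \right\rbrace.
\]
Thus $\mathbf{R}^2 \setminus \{0\} \cong \mathrm{SL}(2,\mathbf{R})/U$ as $\mathrm{SL}(2,\mathbf{R})$-spaces. Since the Lebesgue measure $\nu$ is preserved by $\mathrm{SL}(2,\mathbf{R})$ (as matrices here have determinant one), and since both $\mathrm{SL}(2,\mathbf{R})$ and $U$ are unimodular, the invariant Radon measure on $\mathrm{SL}(2,\mathbf{R})/U$ is unique up to scale; hence $\nu$ coincides with the pushforward of Haar measure by the quotient map, up to a multiplicative constant.

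Next I would translate $\mathrm{SL}(2,\mathbf{Z})$-invariance into a statement on $T^1\mathcal{M}_1$. A measurable $\mathrm{SL}(2,\mathbf{Z})$-invariant function $f : \mathbf{R}^2 \setminus \{0\} \to \mathbf{R}$ pulls back along $\mathrm{SL}(2,\mathbf{R}) \to \mathrm{SL}(2,\mathbf{R})/U$ to a measurable function on $\mathrm{SL}(2,\mathbf{R})$ that is left-$\mathrm{SL}(2,\mathbf{Z})$-invariant and right-$U$-invariant; equivalently, $f$ descends to a right-$U$-invariant measurable function on $T^1\mathcal{M}_1$. The horocycle flow is precisely the right action of $U$ on $T^1\mathcal{M}_1$, and its ergodicity with respect to the Haar probability measure says that every such function is constant almost everywhere. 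Unwinding, $f$ is constant $\nu$-almost everywhere on $\mathbf{R}^2 \setminus \{0\}$, which yields the ergodicity of $\nu$ under $\mathrm{SL}(2,\mathbf{Z})$.

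The main point requiring care is the measure-theoretic bookkeeping in the identification $\mathbf{R}^2 \setminus \{0\} \cong \mathrm{SL}(2,\mathbf{R})/U$: one needs to verify that Lebesgue measure really does correspond to the invariant quotient measure, so that almost-everywhere statements transfer cleanly between the two spaces. This can be made explicit using a Fubini-type computation with the $KAN$ (Iwasawa) decomposition of $\mathrm{SL}(2,\mathbf{R})$. Once the dictionary is in place the argument becomes essentially formal, with the full weight of the theorem borne by the ergodicity of the horocycle flow.
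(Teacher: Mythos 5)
Your argument is correct and carries out exactly the strategy the paper sketches: the paper explicitly declines to prove Theorem~\ref{theo:sl_ergodic} but states that it "can be proved using the ergodicity of the horocycle flow on $T^1\mathcal{M}_1$," and your reduction via $\mathbf{R}^2\setminus\{0\}\cong \mathrm{SL}(2,\mathbf{R})/U$ together with the translation of $\mathrm{SL}(2,\mathbf{Z})$-invariant functions into right-$U$-invariant functions on $T^1\mathcal{M}_1$ is the standard way to do this. The only point deserving a bit more care than you flag is that "pushforward of Haar measure" should be phrased as "the unique (up to scale) $\mathrm{SL}(2,\mathbf{R})$-invariant Radon measure on $\mathrm{SL}(2,\mathbf{R})/U$" — which you essentially already invoke via unimodularity — so the proof is complete.
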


\subsection*{Portmanteau's theorem.} We now state a classic theorem of Portmanteau that gives useful alternative characterizations to the definition of weak-$\star$ convergence in (\ref{eq:weak_def}).

\begin{theorem}
	\label{theo:port_1}
	Let $X$ be a metric space and $(\mu_L)_{L>0}$ be a sequence of locally finite Borel measures on $X$ converging in the weak-$\star$ topology to a Borel measure $\mu$ on $X$. Then, for every open subset $U \subseteq X$,
	\[
	\mu(U) \leq \liminf_{L \to \infty} \mu_L(U).
	\]
	Additionally, for every compact subset $K \subseteq X$ such that $\mu(\partial K) = 0$,
	\begin{equation}
	\label{eq:port_2}
	\lim_{L \to \infty} \mu_L(K) = \mu(K).
	\end{equation}
\end{theorem}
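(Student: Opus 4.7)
The plan is to deduce both inequalities by sandwiching the indicator functions $\mathbf{1}_U$ and $\mathbf{1}_K$ between continuous, compactly supported test functions, since the weak-$\star$ hypothesis (\ref{eq:weak_def}) supplies convergence against precisely such functions.

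For the first assertion, fix an open set $U \subseteq X$. I would produce an increasing sequence $(f_n)_{n \geq 1}$ of continuous, compactly supported functions with $0 \leq f_n \leq \mathbf{1}_U$ and $f_n \uparrow \mathbf{1}_U$ pointwise. Concretely, I exhaust $U$ by compact subsets $K_n \subseteq K_{n+1} \subseteq U$ with $\bigcup_n K_n = U$, and apply Urysohn's lemma to the disjoint closed sets $K_n$ and $X \setminus U$ to obtain continuous $f_n$ with $f_n \equiv 1$ on $K_n$ and $\supp(f_n) \subseteq U$. For each fixed $n$, since $f_n \leq \mathbf{1}_U$,
\[
\int_X f_n \, d\mu \;=\; \lim_{L \to \infty} \int_X f_n \, d\mu_L \;\leq\; \liminf_{L \to \infty} \mu_L(U),
\]
and letting $n \to \infty$ via the monotone convergence theorem gives $\mu(U) \leq \liminf_L \mu_L(U)$.

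For the second assertion I establish matching upper and lower bounds. By local finiteness of $\mu$, choose a precompact open neighborhood $V$ of $K$ with $\mu(V) < \infty$, and pick a decreasing sequence of open neighborhoods $V_n$ of $K$ with $V_n \subseteq V$ and $\bigcap_n V_n = K$. Urysohn's lemma applied to the pair $(K, X \setminus V_n)$ yields continuous, compactly supported functions $g_n$ with $g_n \equiv 1$ on $K$, $\supp(g_n) \subseteq V$, and $g_n \downarrow \mathbf{1}_K$ pointwise. Since $\mathbf{1}_K \leq g_n$,
\[
\limsup_{L \to \infty} \mu_L(K) \;\leq\; \lim_{L \to \infty} \int_X g_n \, d\mu_L \;=\; \int_X g_n \, d\mu,
\]
and the dominated convergence theorem (with integrable dominator $g_1$, since $\mu(V) < \infty$) sends the right-hand side to $\mu(K)$ as $n \to \infty$. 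For the lower bound, apply the first assertion to the open set $\mathrm{int}(K)$ to obtain $\mu(\mathrm{int}(K)) \leq \liminf_L \mu_L(K)$. Since $K$ is compact, hence closed, we have $\partial K = K \setminus \mathrm{int}(K)$, so the hypothesis $\mu(\partial K) = 0$ yields $\mu(\mathrm{int}(K)) = \mu(K)$, and combining the two bounds gives $\mu_L(K) \to \mu(K)$.

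The main obstacle I anticipate is the bookkeeping around compact support and local finiteness: the hypothesis only supplies convergence against functions in $C_c(X)$, so every approximant must be trimmed to a compact support while still respecting the requisite monotone pointwise limits, and the upper bound in the second assertion genuinely relies on $\mu$ being finite on some neighborhood of $K$ so that $g_1$ is an admissible dominator. In the concrete setting $X = \mathbf{R}^2$ that is relevant to the surrounding counting arguments, precompact neighborhoods and Urysohn functions are elementary, so the only real work is checking that the sandwich closes up cleanly.
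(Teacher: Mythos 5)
The paper states Theorem \ref{theo:port_1} as a classical fact without supplying a proof, so there is no in-text argument to compare against. Your proposal is the standard sandwich argument via Urysohn functions and is essentially correct; I will only flag a few points of care.

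First, several of your constructions quietly assume local compactness (compact exhaustion of an open set, precompact open neighborhoods of $K$), which does not hold in an arbitrary metric space. You acknowledge this, and it is harmless here because every application in the paper is on $\mathbf{R}^2$ or on $\mathcal{ML}_g \cong \mathbf{R}^{6g-6}$; indeed, for the theorem to have any content one should read ``metric space'' as ``locally compact, $\sigma$-compact metric space,'' since the weak-$\star$ topology in (\ref{eq:weak_def}) is defined by pairing with $C_c(X)$. Second, raw Urysohn functions $g_n$ for the pairs $(K, X \setminus V_n)$ need not be pointwise decreasing even if the $V_n$ are nested; either replace $g_n$ by $\min(g_1,\dots,g_n)$, or take the explicit $g_n(x) = \max\bigl(0, 1 - d(x,K)/\delta_n\bigr)$ with $\delta_n \downarrow 0$ small enough that $\supp(g_n) \subseteq V$. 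Alternatively, note that dominated convergence needs no monotonicity: the pointwise limit $g_n \to \mathbf{1}_K$ already follows from $\bigcap_n V_n = K$, and the constant dominator $\mathbf{1}_{\overline{V}}$, which is $\mu$-integrable since $\overline{V}$ is compact and $\mu$ is locally finite, suffices. With either fix, the upper bound $\limsup_L \mu_L(K) \leq \mu(K)$ closes up, and your lower bound via $\mathrm{int}(K)$ and $\mu(\partial K) = 0$ is clean, so the sandwich gives $\mu_L(K) \to \mu(K)$ as desired.
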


\begin{exercise}
	Asuming the weak-$\star$ convergence in (\ref{eq:lim_1}) holds, find counterexamples to identity (\ref{eq:port_2}) in Theorem \ref{theo:port_1} when either $K \subseteq \mathbf{R}^2$ is not compact or does not satisfy $\nu(\partial K) = 0$.
\end{exercise}

\subsection*{Limit points of counting measures.} Using Exercises \ref{ex:prim_orb} and \ref{ex:ergodic}, and Theorem \ref{theo:sl_ergodic} and \ref{theo:port_1}, one can greatly restrict the possible weak-$\star$ limit points of the  the sequence of counting measures $(\smash{\nu_L^\mathrm{prim}})_{L>0}$.

\begin{proposition}
	\label{prop:prim_meas_1}
	Every weak-$\star$ limit point $\nu^\mathrm{prim}$ of the sequence of counting measures $(\smash{\nu_L^\mathrm{prim}})_{L>0}$ is of the form $\nu^\mathrm{prim} = c \cdot \nu$ for some constant $c\geq 0$.
\end{proposition}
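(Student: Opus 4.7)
The plan is to identify two properties of any weak-$\star$ limit point $\nu^\mathrm{prim}$ that together force it to be a constant multiple of the Lebesgue measure $\nu$: $\mathrm{SL}(2,\mathbf{Z})$-invariance and absolute continuity with respect to $\nu$. Once both are in hand, Exercise \ref{ex:ergodic} combined with Theorem \ref{theo:sl_ergodic} yields the conclusion immediately.

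First, I would apply Exercise \ref{ex:prim_orb}, whose second half asserts precisely that every weak-$\star$ limit point of $(\nu_L^\mathrm{prim})_{L>0}$ is $\mathrm{SL}(2,\mathbf{Z})$-invariant. Nothing further is required for this step.

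Second, I would establish the absolute continuity. The key observation is that, as measures on $\mathbf{R}^2$, one has the pointwise bound $\nu_L^\mathrm{prim} \leq \nu_L$ for every $L > 0$, since $\mathbf{Z}^2_\mathrm{prim} \subseteq \mathbf{Z}^2$ and the two measures place mass $1/L^2$ at each corresponding rescaled point. Fix a subsequence $L_k \to \infty$ along which $\nu_{L_k}^\mathrm{prim} \to \nu^\mathrm{prim}$ in the weak-$\star$ topology. For any non-negative $f \in C_c(\mathbf{R}^2)$,
\[
\int_{\mathbf{R}^2} f \, d\nu^\mathrm{prim} = \lim_{k \to \infty} \int_{\mathbf{R}^2} f \, d\nu_{L_k}^\mathrm{prim} \leq \lim_{k \to \infty} \int_{\mathbf{R}^2} f \, d\nu_{L_k} = \int_{\mathbf{R}^2} f \, d\nu,
\]
where the last equality uses the weak-$\star$ convergence $\nu_L \to \nu$ from equation (\ref{eq:lim_1}) (Exercise \ref{ex:lebesgue_lim}). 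Approximating the indicator of any open set $U$ from below by a monotone sequence of non-negative continuous compactly supported functions and invoking monotone convergence upgrades this to $\nu^\mathrm{prim}(U) \leq \nu(U)$ for all open $U$, and then outer regularity of Borel measures on $\mathbf{R}^2$ extends the inequality to arbitrary Borel sets $A$. In particular, $\nu(A) = 0$ forces $\nu^\mathrm{prim}(A) = 0$, so $\nu^\mathrm{prim}$ is absolutely continuous with respect to $\nu$.

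Third, having shown that $\nu^\mathrm{prim}$ is $\mathrm{SL}(2,\mathbf{Z})$-invariant and absolutely continuous with respect to $\nu$, and knowing from Theorem \ref{theo:sl_ergodic} that $\nu$ is $\mathrm{SL}(2,\mathbf{Z})$-ergodic, Exercise \ref{ex:ergodic} applies verbatim to conclude that $\nu^\mathrm{prim} = c \cdot \nu$ for some constant $c \geq 0$. The only subtle point in this argument is the passage from the integral inequality against non-negative test functions to the corresponding inequality between measures; this is routine bookkeeping with outer regularity, and I do not expect any genuine obstacle once the weak-$\star$ convergence $\nu_L \to \nu$ is accepted.
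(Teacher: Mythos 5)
Your proof is correct and shares the same skeleton as the paper's argument (invariance via Exercise \ref{ex:prim_orb}, then reduce to absolute continuity, then conclude via Theorem \ref{theo:sl_ergodic} and Exercise \ref{ex:ergodic}), but you take a genuinely different and cleaner route through the absolute-continuity step. The paper does it by hand: it covers a $\nu$-null set $A$ by open squares of total small measure, bounds $\nu_L^\mathrm{prim}(B)$ for each square $B$ by a packing argument (small squares around each lattice point in $L\cdot B$) to get $\limsup_L \nu_L^\mathrm{prim}(B) \leq 16\,\nu(B)$, and then passes to the limit using Portmanteau. You instead observe the pointwise domination $\nu_L^\mathrm{prim} \leq \nu_L$ as measures and invoke the weak-$\star$ convergence $\nu_L \to \nu$ from (\ref{eq:lim_1}) to conclude $\nu^\mathrm{prim}(U) \leq \nu(U)$ for open $U$, then upgrade to arbitrary Borel sets by outer regularity. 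Your version is shorter, bypasses Portmanteau entirely, and delivers the sharper conclusion $\nu^\mathrm{prim} \leq \nu$ (constant $1$) rather than the paper's factor of $16$ --- more than what's needed for absolute continuity, but it costs nothing.

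The one thing to flag is the dependency: you invoke (\ref{eq:lim_1}), which in the paper's layout is Exercise \ref{ex:lebesgue_lim} and is placed \emph{after} Theorem \ref{theo:equid_lat}, whose proof in turn uses the present proposition. This is not a circularity in substance --- (\ref{eq:lim_1}) is an elementary Riemann-sum fact that can be proved directly (or by the translation-invariance argument the paper sketches before the proposition), entirely independently of any primitive-vector considerations --- but in a write-up you should either prove (\ref{eq:lim_1}) first by such a direct argument, or at least remark explicitly that its proof does not depend on Proposition \ref{prop:prim_meas_1}. The paper deliberately keeps its proof of the proposition self-contained precisely to avoid this forward reference; your argument trades that self-containment for brevity.
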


\begin{proof}
	Let $\smash{\nu^\mathrm{prim}}$ be a weak-$\star$ limit point of the sequence $\smash{(\nu_L^\mathrm{prim})_{L>0}}$. Exercise \ref{ex:prim_orb} guarantees that $\smash{\nu^\mathrm{prim}}$ is $\mathrm{SL}(2,\mathbf{Z})$-invariant. Theorem \ref{theo:sl_ergodic} ensures that the Lebesgue measure $\nu$ on $\mathbf{R}^2$ is ergodic with respect to the linear action of $\mathrm{SL}(2,\mathbf{Z})$. Thus, by Exercise \ref{ex:ergodic}, to prove $\smash{\nu^\mathrm{prim}} = c \cdot \nu$ for some constant $c \geq 0$, it is enough to check that $\smash{\nu^\mathrm{prim}}$ is absolutely continuous with respect to $\nu$. 
	
	To prove this let us consider an arbitrary Borel measurable subset $A \subseteq \mathbf{R}^2$ such that $\nu(A) = 0$. Our goal is to show that $\nu^\mathrm{prim}(A) = 0$. Let $\delta > 0$ be arbitrary. The outer regularity of the Lebesgue measure $\nu$ guarantees that one can find a countable collection of open squares $\{B_i\}_{i\in \mathbf{N}}$ such that
	\begin{equation}
	\label{eq:A0}
	A \subseteq \bigcup_{i \in \mathbf{N}} B_i, \quad \sum_{i \in \mathbf{N}} \nu(B_i) \leq \delta.
	\end{equation}
	
	To bound $\nu^\mathrm{prim}(A)$ let us first give a rough bound of the measure $\nu^\mathrm{prim}(B)$ of an arbitrary open square $B \subseteq \mathbf{R}^2$. Denote by $\epsilon > 0$ the side length of $B$. Notice that, for every $L > 0$, 
	\begin{equation}
	\label{eq:A1}
	\nu^\mathrm{prim}_L(B) \leq \nu_L(B) = \frac{\#\left(\mathbf{Z}^2 \cap (L \cdot B)\right)}{L^2}.
	\end{equation}
	The set $L \cdot B \subseteq \mathbf{R}^2$ is an open square of side length $L \cdot \epsilon$. Suppose $L > 0$ is large enough so that this side length satisfies $L \cdot \epsilon \geq 1$. Consider the set $S \subseteq \mathbf{R}^2$ obtained by taking the union of disjoint open squares of side length $1/2$ centered at every point $v \in \mathbf{Z}^2 \cap (L \cdot B)$. This set is contained in an open square $B' \subseteq \mathbf{R}^2$ of side length $L \cdot \epsilon + 1$. It follows that 
	\begin{equation}
	\label{eq:A2}
	\#\left(\mathbf{Z}^2 \cap (L \cdot B)\right) \cdot (1/4) = \nu(S) \leq \nu(B') = (L \cdot \epsilon + 1)^2 \leq 4 \cdot L^2 \cdot \epsilon^2 = 4 \cdot L^2 \cdot \nu(B).
	\end{equation}
	Putting together (\ref{eq:A1}) and (\ref{eq:A2}) we deduce
	\begin{equation}
	\label{eq:A3}
	\limsup_{L \to \infty} \nu_L^\mathrm{prim}(B) \leq 16 \cdot \nu(B).
	\end{equation}
	
	We now bound $\nu^\mathrm{prim}(A)$. Using the cover in (\ref{eq:A0}) and the subadditivity of $\nu^\mathrm{prim}$ we deduce
	\begin{equation}
	\label{eq:a1}
	\nu^\mathrm{prim}(A) \leq \sum_{i \in \mathbf{N}} \nu^\mathrm{prim}(B_i).
	\end{equation}
	Theorem \ref{theo:port_1} ensures that, for every $i \in \mathbf{N}$,
	\begin{equation}
	\label{eq:a2}
	\nu^\mathrm{prim}(B_i) \leq \liminf_{L \to \infty} \nu_L^\mathrm{prim}(B_i)  \leq \limsup_{L \to \infty} \nu_L^\mathrm{prim}(B_i).
	\end{equation}
	The identity in (\ref{eq:A3}) guarantees that, for every $i \in \mathbf{N}$,
	\begin{equation}
	\label{eq:a3}
	\limsup_{L \to \infty} \nu_L^\mathrm{prim}(B_i) \leq 16 \cdot \nu(B_i).
	\end{equation}
	Putting together (\ref{eq:a1}), (\ref{eq:a2}), and (\ref{eq:a3}), and using the inequality in (\ref{eq:A0}) we deduce
	\[
	\nu^\mathrm{prim}(A) \leq \sum_{i \in \mathbf{N}} 16 \cdot \nu(B_i) \leq 16\cdot \delta.
	\]
	As $\delta > 0$ is arbitrary we conclude $\nu^\mathrm{prim}(A) = 0$, thus finishing the proof.
\end{proof}

\subsection*{The space of unimodular lattices.} Our next goal is to show that the constant $c \geq 0$ in the conclusion of Proposition \ref{prop:prim_meas_1} is positive and independent of the limit point $\nu^\mathrm{prim}$. We do this by considering an averaging argument over the space of unimodular lattices of $\mathbf{R}^2$ up to rotation.

A lattice $\Lambda \subseteq \mathbf{R}^2$ is the $\mathbf{Z}$-span of an $\mathbf{R}$-basis of $\mathbf{R}^2$. A marking of a lattice $\Lambda \subseteq \mathbf{R}^2$ is a choice of positively oriented $\mathbf{R}$-basis $(v_1,v_2)$ of $\mathbf{R}^2$ such that $\Lambda = \mathrm{span}_{\mathbf{Z}}(v_1,v_2)$. The covolume of a lattice $\Lambda := \mathrm{span}_{\mathbf{Z}}(v_1,v_2) \subseteq \mathbf{R}^2$ is defined as $\mathrm{covol}(\Lambda) := |\mathrm{det}(v_1,v_2)|$. This definition is independent of the choice of marking $(v_1,v_2)$. A lattice is said to be unimodular if it has unit covolume. The group
\[
\mathrm{SL}(2,\mathbf{R}) := \left\lbrace \left( \begin{array}{c c}
a & b \\ c & d
\end{array}\right) \in \mathrm{Mat}_{2\times2}(\mathbf{R})\ \bigg\vert \ a,b,c,d \in \mathbf{R}, ab-cd = 1 \right\rbrace
\]
acts transitively on the set of unimodular lattices of $\mathbf{R}^2$. The stabilizer of the integer lattice $\mathbf{Z}^2 \subseteq \mathbf{R}^2$ is the group $\mathrm{SL}(2,\mathbf{Z}) \subseteq \mathrm{SL}(2,\mathbf{R})$. We can thus identify the space of unimodular lattices of $\mathbf{R}^2$ with the quotient $T^1 \mathcal{M}_1 := \mathrm{SL}(2,\mathbf{R})/\mathrm{SL}(2,\mathbf{Z})$. The corresponding identification maps the equivalence class of matrices $[A] \in T^1 \mathcal{M}_1$ to the lattice $A \cdot \mathbf{Z}^2 \subseteq \mathbf{R}^2$. The orthogonal group
\[
\mathrm{SO}(2,\mathbf{R}) := \left\lbrace
\left(\begin{array}{c c}
\cos \theta & - \sin \theta \\
\sin \theta & \cos \theta
\end{array} \right) \in \mathrm{Mat}_{2 \times 2}(\mathbf{R})
\ \bigg\vert \ \theta \in [0,2\pi] \right \rbrace
\]
acts on the space of unimodular lattices through its linear action on $\mathbf{R}^2$, or, equivalently, by left multiplication on $T^1 \mathcal{M}_1$. The corresponding quotient $\mathcal{M}_1:= \mathrm{SO}(2,\mathbf{R})\backslash \mathrm{SL}(2,\mathbf{R})/\mathrm{SL}(2,\mathbf{Z})$ can thus be identified with the space of unimodular lattices of $\mathbf{R}^2$ up to rotation.

It will be convenient to consider the following alternative description of the space $\mathcal{M}_1$. Denote
\[
\mathbf{H}^2 := \{z \in \mathbf{C} \ | \ \Im(z) > 0\}.
\]
The upper half-space $\mathbf{H}^2$ can be identified with the space $\mathrm{SO}(2,\mathbf{R})\backslash \mathrm{SL}(2,\mathbf{R})$ of marked unimodular lattices of $\mathbf{R}^2$ up to rotation via the map which sends $z \in \mathbf{H}^2$ to the marked unimodular lattice $\Lambda(z) := \mathrm{span}_{\mathbf{Z}}(c_z e_1,c_z z) \subseteq \mathbf{R}^2$, where $c_z := \Im(z)^{-1/2} > 0$. The discrete group $\mathrm{SL}(2,\mathbf{Z})$ acts properly discontinuously on $\mathbf{H}^2$ by Möbius transformations in the following way,
\[
\left(\begin{array}{c c}
a & b \\
c & d
\end{array}\right) \cdot z = \frac{az+b}{cz+d}, \quad z \in \mathbf{H}^2.
\]
The identification above intertwines this action with the action of $\mathrm{SL}(2,\mathbf{Z})$ on $\mathrm{SO}(2,\mathbf{R})\backslash \mathrm{SL}(2,\mathbf{R})$ by right multiplication. Thus, the quotient $\mathcal{M}_1 := \mathbf{H}^2/\mathrm{SL}(2,\mathbf{Z}^2)$, commonly known as the modular curve, can be identified with the space of unimodular lattices of $\mathbf{R}^2$ up to rotation. See Figure \ref{fig:mod} for a representation of this space using a fundamental domain of the action of $\mathrm{SL}(2,\mathbf{Z})$ on $\mathbf{H}^2$.

Consider the Lebesgue class measure $\mu$ on $\mathbf{H}^2$ given in the coordinates $z  = x + iy$ by
\[
\mu = \frac{dx \thinspace dy}{y^2}.
\]
A direct computation shows that $\mu$ is preserved by the action of $\mathrm{SL}(2,\mathbf{Z})$ on $\mathbf{H}^2$. Denote by $\widehat{\mu}$ the local pushforward of $\mu$ to the quotient $\mathcal{M}^1 := \mathbf{H}^2/\mathrm{SL}(2,\mathbf{Z})$. More explicitely, $\widehat{\mu}$ is the measure on $\mathcal{M}^1$ obtained by restricting $\mu$ to the fundamental domain of the action of $\mathrm{SL}(2,\mathbf{Z})$ on $\mathbf{H}^2$ in Figure \ref{fig:mod}. Local pushforwards of measures will be discussed in more detail in \S3 following a fundamental domain independent perspective. A direct computation shows that $\widehat{\mu}(\mathcal{M}_1) = \pi/3$.

\begin{figure}[h]
	\centering
	\begin{subfigure}[b]{0.4\textwidth}
		\centering
		\includegraphics[width=.85\textwidth]{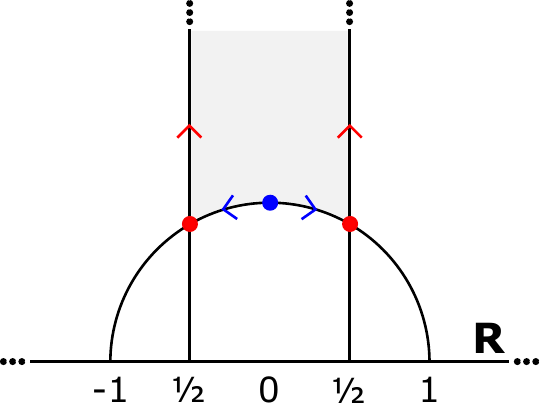}
		\caption{Fundamental domain.}
	\end{subfigure}
	\quad \quad \quad
	~ 
	\begin{subfigure}[b]{0.4\textwidth}
		\centering
		\includegraphics[width=.15\textwidth]{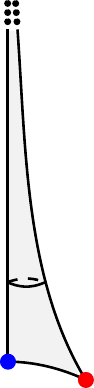}
		\caption{Modular curve $\mathcal{M}_1$.}
	\end{subfigure}
	\caption{A fundamental domain of $\mathrm{SL}(2,\mathbf{Z})$ action on $\mathbf{H}^2$ and the quotient $\mathcal{M}_1$.} 
	\label{fig:mod}
\end{figure}

\subsection*{Averaging over the space of unimodular lattices.} Let $\Lambda \in \mathcal{M}_1$ be a unimodular lattice of $\mathbf{R}^2$. A vector $v \in \Lambda$ is said to be primitive if it cannot be written as a non-negative integer multiple of another vector of $\Lambda$. Denote by $\Lambda_\mathrm{prim} \subseteq \Lambda$ the subset of all primitive vectors of $\Lambda$. Recall that $\| \cdot \|$ denotes the Euclidean norm of $\mathbf{R}^2$. For every $L > 0$ we consider the counting function
\begin{equation}
\label{eq:prim_lat_count}
p(\Lambda,L) := \{v \in \Lambda_\mathrm{prim}  \colon \| v \| \leq L \}.
\end{equation}

As one might already expect after considering the case of the integer lattice $\mathbf{Z}^2 \subseteq \mathbf{R}^2$, explicitely describing $p(\Lambda,L)$ for a given unimodular lattice $\Lambda \in \mathcal{M}_1$ as a function of $L$ is a particularly hard problem. Nevertheless, and perhaps surprisingly, the average
\[
\int_{\mathcal{M}_1} p(\Lambda,L) \thinspace d\widehat{\mu}(\Lambda)
\]
can be computed explicitly. Indeed, the following formula holds. This formula is a consequence of Siegel's famous integration formulas over $\mathcal{M}_1$ \cite{Si45}. In \S 4 we will prove this result using a general local change of variables formula. See Exercise \ref{ex:long}.

\begin{proposition}
	\label{prop:siegel_basic}
	The following integration formula holds,
	\[
	\int_{\mathcal{M}_1}  p(\Lambda,L) \thinspace d\widehat{\mu}(\Lambda) = 2 \cdot L^2.
	\]
\end{proposition}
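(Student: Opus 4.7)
The plan is to use the classical Rankin–Selberg unfolding argument. In three steps: (1) rewrite $p(\Lambda(z), L)$ as a sum over a coset space of $\mathrm{SL}(2,\mathbf{Z})$ using the correspondence between primitive pairs and bottom rows of matrices; (2) unfold the integral from the fundamental domain of $\mathrm{PSL}(2,\mathbf{Z})$ to a fundamental domain of the horocyclic subgroup $N(\mathbf{Z}) = \{\begin{pmatrix} 1 & k \\ 0 & 1 \end{pmatrix} : k \in \mathbf{Z}\}$; (3) compute the resulting elementary integral.

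For step (1), under the identification $\mathbf{R}^2 \cong \mathbf{C}$, the vectors of $\Lambda(z) = \mathrm{span}_{\mathbf{Z}}(c_z, c_z z)$ are $c_z(m+nz)$ for $(m,n) \in \mathbf{Z}^2$, and the primitive ones are exactly those with $\gcd(m,n) = 1$. The norm bound $\|c_z(m+nz)\| \leq L$ becomes $|m+nz|^2 \leq L^2 y$. For any $\gamma = \begin{pmatrix} a & b \\ c & d \end{pmatrix} \in \mathrm{SL}(2,\mathbf{Z})$, setting $(m,n) = (d,c)$ rewrites this as $|cz+d|^2 \leq L^2 y$, equivalently $\Im(\gamma \cdot z) \geq L^{-2}$ by the standard transformation law. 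By Bézout, primitive pairs $(c,d)$ are precisely the bottom rows of $\mathrm{SL}(2,\mathbf{Z})$-matrices and so biject with the left coset space $N(\mathbf{Z}) \backslash \mathrm{SL}(2,\mathbf{Z})$. Since the Möbius action factors through $\mathrm{PSL}(2,\mathbf{Z})$ and $-I \notin N(\mathbf{Z})$, the induced projection $N(\mathbf{Z}) \backslash \mathrm{SL}(2,\mathbf{Z}) \to N(\mathbf{Z}) \backslash \mathrm{PSL}(2,\mathbf{Z})$ is two-to-one, so I obtain
\[
p(\Lambda(z), L) = 2 \sum_{\gamma \in N(\mathbf{Z}) \backslash \mathrm{PSL}(2,\mathbf{Z})} \mathbf{1}_{\Im(\gamma \cdot z) \geq L^{-2}}.
\]

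For step (2), I will use the $\mathrm{PSL}(2,\mathbf{Z})$-invariance of $d\mu = dx\,dy/y^2$ together with the fact that the translates $\gamma F$ for $\gamma \in N(\mathbf{Z}) \backslash \mathrm{PSL}(2,\mathbf{Z})$ tile (up to measure zero) the strip $S = \{z \in \mathbf{H}^2 : 0 \leq \Re z < 1\}$, which is a fundamental domain for $N(\mathbf{Z})$. This yields
\[
\int_{\mathcal{M}_1} p(\Lambda, L) \, d\widehat{\mu}(\Lambda) = 2 \int_S \mathbf{1}_{y \geq L^{-2}} \, \frac{dx\,dy}{y^2}.
\]
Step (3) is then the elementary calculation $\int_0^1 dx \int_{L^{-2}}^\infty y^{-2}\, dy = L^2$, delivering the claimed value $2L^2$. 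The main bookkeeping subtlety is the factor of $2$ arising from $-I$: primitive pairs $(c,d)$ and $(-c,-d)$ give identical Möbius-orbit conditions but are counted separately by $p$; once this is accounted for, the rest is the standard unfolding calculation.
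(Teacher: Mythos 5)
Your proof is correct, and the answer $2L^2$ does match the normalization of $\widehat{\mu}$ used in \S2 (restriction of $dx\,dy/y^2$ to a fundamental domain, total mass $\pi/3$). The underlying idea is the same as the paper's: an unfolding/change of variables from a fundamental domain for $\mathrm{PSL}(2,\mathbf{Z})$ to a fundamental domain for the horocyclic subgroup. The difference is in packaging. The paper defers this computation to Exercise~\ref{ex:long}, where it is routed through the abstract local change-of-variables formula of Exercise~\ref{ex:local_cv} (pushing $\widehat{f}(\Lambda)=\sum_{g\in G/H}f(g.\Lambda)$ down to $X/H$); that abstraction is deliberate, since the same lemma is reused shortly afterwards to derive Mirzakhani's integration formula. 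Your argument is the concrete, classical Rankin--Selberg version of the same unfolding, carried out directly on $\mathbf{H}^2$. You also handle the $\pm I$ bookkeeping correctly and explicitly: the factor of $2$ you isolate from the two-to-one map $N(\mathbf{Z})\backslash\mathrm{SL}(2,\mathbf{Z})\to N(\mathbf{Z})\backslash\mathrm{PSL}(2,\mathbf{Z})$ is precisely the discrepancy the paper flags between the stabilizer-adjusted local pushforward of Exercise~\ref{ex:long} (which yields $L^2$) and the fundamental-domain normalization of Proposition~\ref{prop:siegel_basic} (which yields $2L^2$). The only thing you leave implicit is the standard unfolding step itself --- that $\{\gamma F\}_{\gamma\in N(\mathbf{Z})\backslash\mathrm{PSL}(2,\mathbf{Z})}$ tile a fundamental strip for $N(\mathbf{Z})$ up to measure zero --- but you state it accurately and it is routine.
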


To prove Theorem \ref{theo:main_lec_1} we will need a more uniform control on the integrability of the counting functions $p(\Lambda,L)$ as $L \to \infty$. To this end we consider the function $u \colon \mathcal{M}_1 \to \mathbf{R}$ given by
\[
u(\Lambda) := \sup_{v \in \Lambda} \frac{1}{\|v\|}.
\]

\begin{exercise}
	\label{eq:dct}
	Show there exists a constant $C > 0$ such that for every $\Lambda \in \mathcal{M}_1$ and every $L > 0$,
	\[
	p(\Lambda,L) \leq C \cdot L^2 \cdot u(\Lambda).
	\]
	Additionally, show that the function $u \colon \mathcal{M}_1 \to \mathbf{R}$ is integrable with respect to the measure $\widehat{\mu}$, i.e.,
	\[
	\int_{\mathcal{M}_1} u(\Lambda) \thinspace d\widehat{\mu}(\Lambda) < \infty.
	\]
\end{exercise}

\subsection*{Convergence and compactness.} Before proceeding with the proof of Theorem \ref{theo:main_lec_1} let us discuss a couple of basic convergence and compactness criteria. To prove the sequence of counting measure $(\nu^\mathrm{prim}_L)_{L > 0}$ on $\mathbf{R}^2$ converges in the weak-$\star$ topology we will use the following convergence criterion.

\begin{exercise}
	\label{ex:conv}
	Let $X$ be a metric space. Show that a sequence $(x_L)_{L>0}$ in $X$ converges to $x \in X$ if and only if every subsequence of $(x_L)_{L>0}$ has a subsubsequence converging to $x$.
\end{exercise}

The following compactness criterion for the weak-$\star$ topology serves as a convenient tool for extracting convergent subsequences of locally finite measures on metric spaces. This result is a consequence of the well known Banach-Alaoglu theorem in functional analysis.

\begin{theorem}
	\label{theo:ba_al}
	Let $X$ be a metric space and $(\mu_n)_{n \in \mathbf{N}}$ be a sequence of  locally finite measures on $X$. Suppose that the sequence $(\mu_n(K))_{n \in \mathbf{N}}$ is bounded for every $K \subseteq X$ compact. Then, the sequence $(\mu_n)_{n \in \mathbf{N}}$ has a subsequence converging in the weak-$\star$ topology to a locally finite measure on $X$.
\end{theorem}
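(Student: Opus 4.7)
The plan is to deduce this from the Banach-Alaoglu theorem via the Riesz representation theorem. By Riesz, locally finite positive Borel measures on $X$ correspond (under mild regularity on $X$, which holds in every situation the survey applies the theorem to, e.g. $X = \mathbf{R}^2$) to positive linear functionals on the space $C_c(X)$ of compactly supported continuous functions on $X$, and weak-$\star$ convergence of such measures amounts exactly to pointwise convergence of the corresponding functionals on $C_c(X)$. I would therefore recast the problem as extracting a subsequence of $(\mu_n)$ for which $\int_X f\, d\mu_n$ converges for every $f \in C_c(X)$, with limit still a positive linear functional.

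First I would exhaust $X$ by an increasing sequence of compact sets $K_1 \subseteq K_2 \subseteq \cdots$ with $K_m \subseteq \mathrm{int}(K_{m+1})$ and $\bigcup_m K_m = X$, using local compactness and $\sigma$-compactness. For each $m$, let $C_m$ denote the Banach space of continuous functions on $X$ with support in $K_m$, equipped with the sup norm; since $X$ is separable metric, $C_m$ is separable. The hypothesis supplies a finite constant $M_m := \sup_n \mu_n(K_m)$, and for every $f \in C_m$,
\[
\left| \int_X f \, d\mu_n \right| \leq \|f\|_\infty \cdot \mu_n(K_m) \leq M_m \|f\|_\infty,
\]
so the restrictions of the $\mu_n$ form a uniformly norm-bounded sequence in the dual $C_m^\ast$.

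Next I would invoke Banach-Alaoglu. Combined with separability of $C_m$, the closed ball of radius $M_m$ in $C_m^\ast$ is sequentially weak-$\star$ compact, so from $(\mu_n)$ I can extract a subsequence along which $\int_X f\, d\mu_n$ converges for every $f \in C_m$. A standard diagonal extraction over $m$ then produces a single subsequence $(\mu_{n_k})$ for which $\int_X f\, d\mu_{n_k}$ converges for every $f \in \bigcup_m C_m = C_c(X)$. The resulting limit $\Lambda(f) := \lim_k \int_X f\, d\mu_{n_k}$ is linear, positive (as a pointwise limit of positive functionals), and finite on every $f \in C_c(X)$. The Riesz representation theorem then produces a locally finite positive Borel measure $\mu$ on $X$ with $\Lambda(f) = \int_X f\, d\mu$ for every $f \in C_c(X)$, which by construction is the weak-$\star$ limit of $(\mu_{n_k})$.

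The main technical obstacle is the diagonal passage together with verifying that the limiting functional really corresponds to a locally finite measure rather than, say, blowing up on some $f \in C_c(X)$; this is precisely what the bound $|\Lambda(f)| \leq M_m \|f\|_\infty$ for $f \in C_m$ rules out, so once the uniform bounds from the hypothesis are in place the argument is essentially bookkeeping. The remaining delicacy is ensuring the underlying space is nice enough for Riesz, but for metric $X$ in which every compact set has finite measure the standard formulation applies directly.
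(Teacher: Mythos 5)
The paper does not actually give a proof of this theorem: it only remarks that ``this result is a consequence of the well known Banach--Alaoglu theorem.'' Your argument is exactly the standard unfolding of that remark---identify locally finite measures with positive functionals on $C_c(X)$ via Riesz representation, use the hypothesis to get uniform bounds in each $C(K_m)^\ast$, extract a weak-$\star$ convergent subsequence via Banach--Alaoglu together with separability of $C(K_m)$, diagonalize over $m$, and apply Riesz once more to the limiting positive functional---so it is correct and essentially the intended proof. You are also right to flag that, although the statement says ``metric space,'' the argument (and Riesz representation) really requires $X$ to be locally compact, $\sigma$-compact, and separable; this is a mild imprecision in the paper's formulation rather than a flaw in your proof, and those properties hold for every $X$ the survey actually applies the theorem to ($\mathbf{R}^2$ and $\mathcal{ML}_g \cong \mathbf{R}^{6g-6}$).
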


\subsection*{Equidistribution of primitive integer points.} We are finally ready to prove that the sequence of counting measure $\smash{(\nu^\mathrm{prim}_L)_{L > 0}}$ on $\mathbf{R}^2$ converges in the weak-$\star$ topology to a constant multiple of the Lebesgue measure $\nu$. We will later deduce Theorem \ref{theo:main_lec_1} directly from this result.

\begin{theorem}
	\label{theo:equid_lat}
	With respect to the weak-$\star$ topology for measures on $\mathbf{R}^2$,
	\[
	\lim_{L \to \infty} \nu_{L}^\mathrm{prim} = \frac{6}{\pi^2} \cdot \nu.
	\]
\end{theorem}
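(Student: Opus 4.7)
The plan is to combine Theorem \ref{theo:ba_al}, Proposition \ref{prop:prim_meas_1}, Siegel's integration formula (Proposition \ref{prop:siegel_basic}), and the subsequence criterion of Exercise \ref{ex:conv}. Concretely, I will show that every subsequence of $(\nu_L^\mathrm{prim})_{L>0}$ admits a sub-subsequence converging in the weak-$\star$ topology, and that every such sub-subsequential limit equals $\frac{6}{\pi^2} \cdot \nu$; by Exercise \ref{ex:conv} this gives convergence of the full sequence.

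For compactness, write $B_R \subseteq \mathbf{R}^2$ for the closed ball of radius $R$ centered at the origin. Exercise \ref{eq:dct} applied to $\Lambda = \mathbf{Z}^2$ gives $\nu_L^\mathrm{prim}(B_R) = p(\mathbf{Z}^2, LR)/L^2 \leq C R^2 u(\mathbf{Z}^2)$ for every $R, L > 0$, so $(\nu_L^\mathrm{prim})_{L>0}$ has uniformly bounded mass on every compact ball. Theorem \ref{theo:ba_al} then extracts a weak-$\star$ convergent sub-subsequence $(\nu_{L_n}^\mathrm{prim})_n$, and Proposition \ref{prop:prim_meas_1} identifies its limit as $c \cdot \nu$ for some $c \geq 0$. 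The remaining task is to show $c = 6/\pi^2$.

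For this I would run the same analysis simultaneously over all unimodular lattices, then average using Siegel. For $\Lambda \in \mathcal{M}_1$ set $\nu_L^{\Lambda,\mathrm{prim}} := L^{-2} \sum_{v \in \Lambda_\mathrm{prim}} \delta_{v/L}$. The identity $(A \cdot \mathbf{Z}^2)_\mathrm{prim} = A \cdot \mathbf{Z}^2_\mathrm{prim}$, a direct consequence of Exercise \ref{ex:prim_orb}, gives $\nu_L^{\Lambda,\mathrm{prim}} = A_\ast \nu_L^\mathrm{prim}$ whenever $\Lambda = A \cdot \mathbf{Z}^2$. Since Lebesgue measure is $\mathrm{SL}(2,\mathbf{R})$-invariant and $A_\ast(c \cdot \nu) = c \cdot \nu$, the same sub-subsequence $L_n$ satisfies $\nu_{L_n}^{\Lambda,\mathrm{prim}} \to c \cdot \nu$ for every $\Lambda \in \mathcal{M}_1$, with the same constant $c$.

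Siegel's formula now reads
\[
\int_{\mathcal{M}_1} \nu_L^{\Lambda,\mathrm{prim}}(B_1) \, d\widehat{\mu}(\Lambda) = \frac{1}{L^2} \int_{\mathcal{M}_1} p(\Lambda, L) \, d\widehat{\mu}(\Lambda) = 2.
\]
Since $\partial B_1$ is Lebesgue-null, Theorem \ref{theo:port_1} gives $\nu_{L_n}^{\Lambda,\mathrm{prim}}(B_1) \to c \cdot \nu(B_1) = c \pi$ for each $\Lambda \in \mathcal{M}_1$, and the bound $\nu_L^{\Lambda,\mathrm{prim}}(B_1) \leq C \cdot u(\Lambda)$ from Exercise \ref{eq:dct}, combined with $u \in L^1(\widehat{\mu})$, provides the domination needed. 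Passing $L = L_n \to \infty$ in the display above via dominated convergence yields $2 = c \pi \cdot \widehat{\mu}(\mathcal{M}_1) = c \pi^2 / 3$, hence $c = 6/\pi^2$ for every sub-subsequential limit. I expect the main obstacle to be careful bookkeeping at this final step: verifying that the bound of Exercise \ref{eq:dct} is genuinely uniform in $L$ so that dominated convergence applies on $\mathcal{M}_1$, and tracking how the pushforward identity $\nu_L^{\Lambda,\mathrm{prim}} = A_\ast \nu_L^\mathrm{prim}$ transfers subsequential convergence uniformly across all lattices with a common $c$.
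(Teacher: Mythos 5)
Your proof is correct and follows essentially the same strategy as the paper: extract a weak-$\star$ convergent subsubsequence via Banach--Alaoglu, identify the limit as $c \cdot \nu$ by Proposition \ref{prop:prim_meas_1}, and pin down $c$ by passing to the limit in Siegel's average via Portmanteau and dominated convergence. The only cosmetic difference is bookkeeping: where the paper fixes the measures $\nu_{L_k}^{\mathrm{prim}}$ and pulls back the unit ball to $B_A = A^{-1}B_1$, you push forward the measures to $\nu_L^{\Lambda,\mathrm{prim}} = A_\ast \nu_L^{\mathrm{prim}}$ and keep the test set $B_1$ fixed; these are dual phrasings of the same computation, and your worry about whether the common $c$ transfers across $\Lambda$ is handled by exactly the $\mathrm{SL}(2,\mathbf{R})$-invariance of $\nu$ you invoke (one small note: the identity $(A\cdot\mathbf{Z}^2)_{\mathrm{prim}} = A\cdot\mathbf{Z}^2_{\mathrm{prim}}$ is a direct consequence of $A$ being an invertible linear map, not of Exercise \ref{ex:prim_orb}, which concerns the $\mathrm{SL}(2,\mathbf{Z})$-orbit of $(1,0)$).
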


\begin{proof}
	By Exercise \ref{ex:conv} applied to the space of regular measures on $\mathbf{R}^2$ with the weak-$\star$ topology, it is enough to show every subsequence of $\smash{(\nu^\mathrm{prim}_L)_{L > 0}}$ has a subsubsequence converging to $(6/\pi^2) \cdot \nu$ in the weak-$\star$ topology. Theorem \ref{theo:ba_al} and the bound in (\ref{eq:A3}) ensure that every subsequence of $\smash{(\nu^\mathrm{prim}_L)_{L > 0}}$ has a subsubsequence $\smash{(\nu^\mathrm{prim}_{L_k})_{k \in \mathbf{N}}}$ converging in the weak-$\star$ topology to a locally finite measure $\nu^\mathrm{prim}$ on $\mathbf{R}^2$. By Proposition \ref{prop:prim_meas_1}, it must be the case that $\nu^\mathrm{prim} = c \cdot \nu$ for some constant $c \geq 0$. It is enough then to show that $c = 6/\pi^2$, independent of the subsubsequence considered. To prove this identity holds we compute the following limit in two different ways
	\[
	\lim_{k \to \infty} \int_{\mathcal{M}_1} \frac{p(\Lambda,L_k)}{L_k^2} \thinspace d\widehat{\mu}(\Lambda).
	\]
	
	The first computation is immediate. Directly from Proposition \ref{prop:siegel_basic} we deduce
	\begin{equation}
	\label{eq:s1}
	\lim_{k \to \infty} \int_{\mathcal{M}_1} \frac{p(\Lambda,L_k)}{L_k^2} \thinspace d\widehat{\mu}(\Lambda) = 2.
	\end{equation}
	
	The second computation requires more work. For every matrix $A \in \mathrm{SL}(2,\mathbf{R})$ consider the subset
	\[
	B_A := \{v \in \mathbf{R}^2 \colon \|A \cdot v\| \leq 1\}.
	\]
	Notice that, for every unimodular lattice $\Lambda := A \cdot \mathbf{Z}^2 \in \mathcal{M}_1$ with $A \in \mathrm{SL}(2,\mathbf{Z})$ and every $k \in \mathbf{N}$,
	\begin{equation*}
	\frac{p(\Lambda,L_k)}{L_k^2} = \smash{\nu_{L_k}^\mathrm{prim}}(B_A).
	\end{equation*}
	In this setting, as $\lim_{k \to \infty} \smash{\nu^\mathrm{prim}_{L_k}} = c \cdot \nu$ in the weak-$\star$ topology, Theorem \ref{theo:port_1} ensures that
	\[
	\lim_{k \to \infty} \frac{p(\Lambda,L_k)}{L_k^2} = \lim_{k \to \infty} \smash{\nu^\mathrm{prim}_{L_k}}(B_A) = c \cdot \nu(B_A) = c \cdot \pi.
	\]
	From this identity, Exercise \ref{eq:dct}, the dominate convergence theorem, and the fact that $\widehat{\mu}(\mathcal{M}_1) = \pi/3$ we deduce
	\begin{equation}
	\label{eq:s2}
	\lim_{k \to \infty} \int_{\mathcal{M}_1} \frac{p(\Lambda,L_k)}{L_k^2} \thinspace d\widehat{\mu}(\Lambda) = c \cdot \frac{\pi^2}{3}.
	\end{equation}
	
	Putting together (\ref{eq:s1}) and (\ref{eq:s2}) we deduce
	\[
	c \cdot \frac{\pi^2}{3} = 2.
	\]
	Solving for $c$ we conclude $c = 6/\pi^2$, thus finishing the proof.
\end{proof}

\begin{exercise}
	\label{ex:lebesgue_lim}
	Recall the definition of the sequence of counting measures $(\nu_{L})_{L > 0}$ on $\mathbf{R}^2$ in (\ref{eq:leb_count}). Using the methods introduced in the proof of Theorem \ref{theo:equid_lat} show that (\ref{eq:lim_1}) holds, i.e., show that with respect to the weak-$\star$ topology for measures on $\mathbf{R}^2$,
	\[
	\lim_{L \to \infty} \nu_L = \nu.
	\]
\end{exercise}

\subsection*{Counting primitive integer points.} We are now ready to prove Theorem \ref{theo:main_lec_1}, the main result of this section.  Let us restate this theorem in the following equivalent way.

\begin{theorem}
	\label{theo:main_lec_1_red}
	The following asymptotic estimate holds,
	\[
	\lim_{L \to \infty} \frac{p\left(\mathbf{Z}^2,L\right)}{L^2} = \frac{6}{\pi}.
	\]
\end{theorem}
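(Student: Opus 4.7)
The plan is to obtain this limit as an essentially immediate consequence of the equidistribution statement in Theorem \ref{theo:equid_lat}, combined with the compact-set half of Portmanteau's theorem (Theorem \ref{theo:port_1}).

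The key observation is that if $B \subseteq \mathbf{R}^2$ denotes the closed unit ball centered at the origin, then by the very definition of $\nu_L^{\mathrm{prim}}$ and the rescaling already carried out at the start of the section,
\[
\nu_L^{\mathrm{prim}}(B) = \frac{p(\mathbf{Z}^2,L)}{L^2}
\]
for every $L > 0$. The set $B$ is compact, and its boundary is the unit circle, which has two-dimensional Lebesgue measure zero; in particular $\nu(\partial B) = 0$.

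First I would invoke Theorem \ref{theo:equid_lat} to conclude that $\nu_L^{\mathrm{prim}} \to (6/\pi^2)\cdot\nu$ in the weak-$\star$ topology as $L \to \infty$. Since the limit measure $(6/\pi^2)\nu$ gives zero mass to $\partial B$, the compact-set clause (\ref{eq:port_2}) of Theorem \ref{theo:port_1} applies to $K = B$, yielding
\[
\lim_{L \to \infty} \nu_L^{\mathrm{prim}}(B) \;=\; \frac{6}{\pi^2} \cdot \nu(B) \;=\; \frac{6}{\pi^2} \cdot \pi \;=\; \frac{6}{\pi}.
\]
Combining this with the identity $\nu_L^{\mathrm{prim}}(B) = p(\mathbf{Z}^2,L)/L^2$ gives the desired asymptotic.

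There is no substantive obstacle: the heavy lifting — extracting weak-$\star$ limits, exploiting $\mathrm{SL}(2,\mathbf{Z})$-invariance together with ergodicity of Lebesgue measure, and pinning down the constant via Siegel's integration formula — has already been carried out in Proposition \ref{prop:prim_meas_1}, Theorem \ref{theo:equid_lat}, and Proposition \ref{prop:siegel_basic}. The only minor technical point is that one must use the compact-set clause of Portmanteau's theorem rather than weak-$\star$ convergence against continuous compactly supported test functions, since the indicator $\mathbf{1}_B$ is not continuous; this is legitimate precisely because $\partial B$ is a smooth curve of Lebesgue measure zero.
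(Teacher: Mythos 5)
Your proof is correct and follows exactly the same route as the paper: rewrite $p(\mathbf{Z}^2,L)/L^2$ as $\nu_L^{\mathrm{prim}}(B)$, apply Theorem \ref{theo:equid_lat}, and invoke the compact-set clause of Portmanteau's theorem to pass to the limit. The paper's proof is slightly terser (it does not explicitly note that $\nu(\partial B)=0$), but that hypothesis check is exactly the right thing to spell out and you state it correctly.
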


\begin{proof}
 Recall that if $B \subseteq \mathbf{R}^2$ is the Euclidean unit ball centered at the origin then, for every $L > 0$,
 \[
 \frac{p\left(\mathbf{Z}^2,L\right)}{L^2} = \smash{\nu_{L}^\mathrm{prim}}(B).
 \]
 Using Theorems \ref{theo:equid_lat} and \ref{theo:port_1} we conclude
  \[
 \lim_{L \to \infty} \frac{p\left(\mathbf{Z}^2,L\right)}{L^2} = \lim_{L \to \infty} \smash{\nu_{L}^\mathrm{prim}}(B) = \frac{6}{\pi^2} \cdot \nu(B) = \frac{6}{\pi}. \qedhere
 \]
\end{proof}

\begin{exercise}
	Using Theorem \ref{theo:equid_lat} show that for every unimodular lattice $\Lambda \in \mathcal{M}_1$,
	\[
	\lim_{L \to \infty} \frac{p(\Lambda,L)}{L^2} = \frac{6}{\pi}.
	\]
\end{exercise}

\section{Hyperbolic surfaces, Teichmüller spaces, and simple closed curves}

\subsection*{Outline of this section.} In this section we cover the background material needed to understand the proof of Theorem \ref{theo:main}. The focus will be in developing geometric intuition rather than on giving complete proofs. Unless otherwise stated, all surfaces considered will be connected and orientable. Two excellent references for the topics that will be covered in this section are \cite{FM11} and \cite{Mar16}.

\subsection*{The hyperbolic plane.} The hyperbolic plane $\mathbf{H}^2$ is the unique, up to isometry, two dimensional simply connected Riemannian manifold of constant sectional curvature $-1$. The hyperbolic plane can be modeled on the upper half space $\{z \in \mathbf{C} \ | \ \Im(z) > 0\}$ by endowing it with the Riemannian metric
\[
g := \frac{dx^2 + dy^2}{y^2}.
\]
The geodesics of this metric are the lines and half circles of the upper half space perpendicular to the the real axis $\mathbf{R} \subseteq \mathbf{C}$. See Figure \ref{fig:h2}. The orientation preserving isometries of this metric can be identified with the group $\mathrm{PSL}(2,\mathbf{R}) = \mathrm{SL}(2,\mathbf{R})/\{\pm \mathrm{I}\}$ acting on $\mathbf{H}^2$ by Möbius transformations:
\[
\left( \begin{array}{c c}
a & b \\ c & d
\end{array}\right).z := \frac{az +b}{cz+d}, \quad \forall z \in \mathbf{H}.
\]
This group acts simply transitively on the unit tangent bundle of $\mathbf{H}^2$. Such a large isometry group should hint at a rigid geometry: it becomes hard to distinguish objects up to isometry. The next exercise is a manifestation of this idea. Nevertheless, in dimension two the situation remains quite flexible, as we will see below. In higher dimensions the picture becomes incredibly rigid; curious readers are invited to investigate Mostow's rigidity theorem.

\begin{figure}[h!]
	\centering
	\includegraphics[width=.35\textwidth]{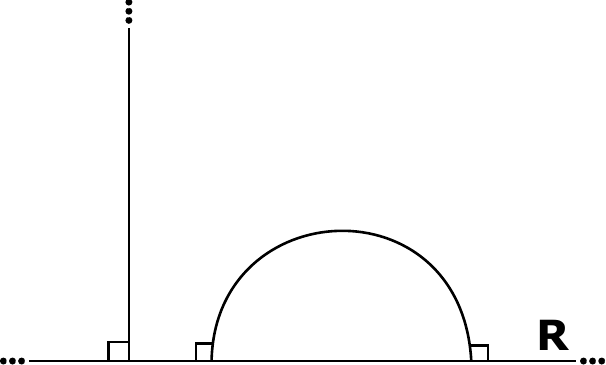}
	\vspace{+.2cm}
	\caption{The geodesics of the hyperbolic plane.} \label{fig:h2} 
\end{figure}

\begin{exercise}
	\label{ex:rigid_hex}
	Show that for every $(a,b,c) \in (\mathbf{R}^+)^3$ there exists a unique, up to isometry, hyperbolic right angled hexagon with alternating edge lengths $(a,b,c)$. \textit{Hint: Consider a configuration as in Figure \ref{fig:hexagon} and study how the length $z(y)$ varies as the parameter $y$ varies.}
\end{exercise}

\begin{figure}[h!]
	\centering
	\includegraphics[width=.40\textwidth]{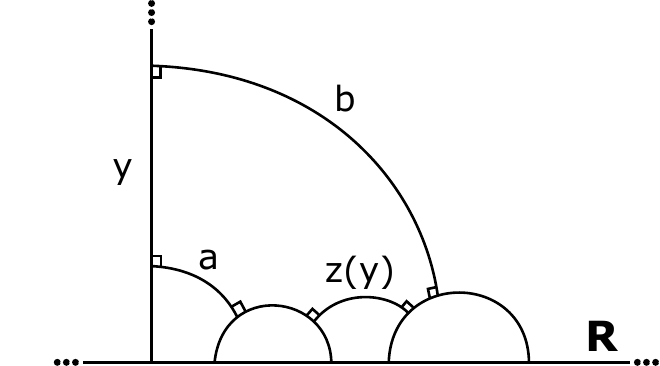}
	\vspace{+.2cm}
	\caption{Right angled hyperbolic hexagons are rigid.} \label{fig:hexagon} 
\end{figure}

\subsection*{Hyperbolic surfaces.} A hyperbolic surface is a surface whose geometry is locally modelled on $\mathbf{H}^2$. More concetely, a hyperbolic surface $X$ is a surface endowed with an atlas of charts to $\mathbf{H}^2$ whose transition functions are restrictions of isometries of $\mathbf{H}^2$. Pulling back the metric of $\mathbf{H}^2$ via these charts yields a metric of constant curvature equal to $-1$ on $X$. A geodesic of $X$ is a geodesic of this metric. Equivalently, a geodesic of $X$ is a curve which is mapped to geodesics of $\mathbf{H}^2$ via local charts. Every closed curve on $X$ can be tightened, i.e., homotoped, to a unique geodesic representative.

We will not spend much time discussing the many interesting features of hyperbolic surfaces but let us at least highlight one fact that is very useful to keep in mind for the sake of geometric intuition. The following fact is known as the collar lemma: every simple closed geodesic on a hyperbolic surface has a collar whose width goes to infinity as the length of the geodesic goes to zero. See Figure \ref{fig:collar_lemma}. 

\begin{figure}[h]
	\centering
	\begin{subfigure}[b]{0.4\textwidth}
		\centering
		\includegraphics[width=0.2\textwidth]{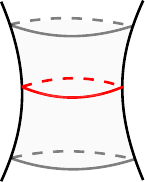}
			\vspace{+.8cm}
		\caption{Every geodesic has a collar.}
	\end{subfigure}
	\quad \quad \quad
	~ 
	\begin{subfigure}[b]{0.4\textwidth}
		\centering
		\includegraphics[width=0.2\textwidth]{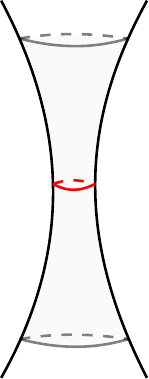}
		\caption{Shorter geodesics have longer collars.}
	\end{subfigure}
	\caption{The collar lemma.} 
	\label{fig:collar_lemma}
\end{figure}

\subsection*{Teichmüller space.} All hyperbolic surfaces of a given genus fit together nicely into a moduli space. To keep track of how any pair of hyperbolic surfaces looks with respect to each other we will record more information than just their position in moduli space. This leads us to introduce Teichmüller space. We will later see how to recover moduli space as a quotient of Teichmüller space.

For the rest of this lecture we fix an integer $g \geq 2$ and a connected, oriented, closed surface $S_g$ of genus $g$. The Teichmüller space $\mathcal{T}_{g}$ of marked hyperbolic structures on $S_g$ is the space of all pairs $(X,\varphi)$ where $X$ is an oriented hyperbolic surface and $\varphi \colon S_g \to X$ is an orientation preserving homeomorphism, modulo the equivalence relation $(X_1,\varphi_1) \sim (X_2,\varphi_2)$ if and only if there exists an orientation preserving isometry $I \colon X_1 \to X_2$ isotopic to $\varphi_2 \circ \varphi_1^{-1}$. In most situations we will omit the marking of a point $(X,\varphi) \in \mathcal{T}_g$ and denote it simply by $X \in \mathcal{T}_g$.

Roughly speaking, a point in Teichmüller space does not only keep track of a hyperbolic structure on $S_g$ but also of how $S_g$ is ``wearing" that hyperbolic structure. This allows us to communicate marked hyperbolic structures on $S_g$ in a very explicit way. For instance, given a marked hyperbolic structure $(X,\varphi) \in \mathcal{T}_g$, any parametrized closed curve $\gamma$ on $S_g$ can be canonically identified with the closed geodesic obtained by tightening the closed curve $\varphi(\gamma)$ on $X$ to its unique geodesics representative. We denote the length of this representative by $\ell_\gamma(X)$.

\subsection*{The mapping class group.} The mapping class group of $S_g$, denoted $\mcg$, is the group of orientation preserving homeomorphisms of $S_g$ up to homotopy. More explicitely, 
\[
\mcg := \mathrm{Homeo}^+(S_g) / \mathrm{Homeo}_0(S_g).
\] 
This group acts naturally on $\mathcal{T}_g$ by changing the markings: for every $(X,\varphi) \in \mathcal{T}_g$ and every $\phi \in \mcg$, 
\[
\phi.(X,\varphi) = (X,\varphi \circ \phi^{-1}).
\]
We think of $\mcg$ acting on $\tt$ as an analogue of $\mathrm{SL}(2,\mathbf{Z})$ acting on $\mathbf{H}^2$.

\begin{exercise}
	Let $(X,\varphi) \in \mathcal{T}_g$ be a marked hyperbolic structure on $S_g$. Show there exists a natural one-to-one correspondence between the $\mcg$ stabilizer of $(X,\varphi)$ and the set of isometries of $X$.
\end{exercise}

A particularly important family of mapping classes are Dehn twists. Given a simple closed curve $\gamma$ on $S_g$, the Dehn twist $T_\gamma$ of $S_g$ along $\gamma$ is the mapping class which leaves the surface $S_g$ untouched safe for an embedded annular neighborhood of $\gamma$ which gets twisted to the right, with respect to the orientation of $S_g$, by a full rotation. See Figure \ref{fig:dt}.

\begin{figure}[h!]
	\centering
	\vspace{+0.1cm}
	\begin{tabular}{c c c}
		\includegraphics[scale=.8]{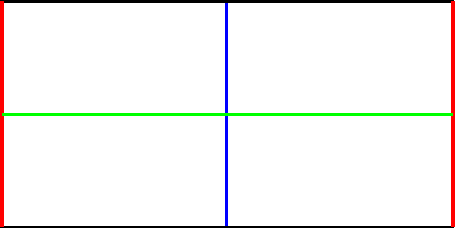} & \begin{tabular}{c} $\xrightarrow{ \quad T_\gamma \quad } $  \\ \\ \\ \\ \\ \end{tabular}& \begin{tabular}{c} \includegraphics[scale=.8]{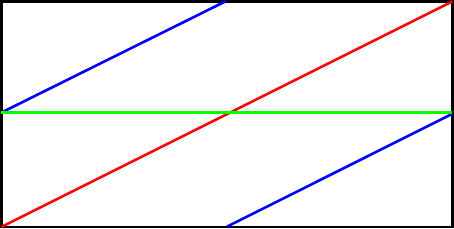} \\[1.5cm] \end{tabular}
	\end{tabular}
	\vspace{-1.55cm}
	\captionsetup{width=\linewidth}
	\caption{Dehn twist in an annular neighborhood of a simple closed curve $\gamma$ (in green).} \label{fig:dt}
\end{figure}

\subsection*{Moduli space of hyperbolic surfaces.} As $\mcg$ acts on $\mathcal{T}_g$ by changing the markings, one should expect that, if one quotients $\mathcal{T}_g$ by this action, one should get a space grouping together all (unmarked) hyperbolic surfaces of genus $g$. It turns out that $\mcg$ acts on $\mathcal{T}_g$ properly discontinuously and thus the corresponding quotient behaves nicely. The quotient $\mathcal{M}_g := \mathcal{T}_g / \mcg$ is the moduli space of genus $g$ hyperbolic surfaces. By uniformization, this space can be canonically identified with the moduli space of genus $g$ Riemann surfaces you might have seen in algebraic geometry.

\subsection*{Fenchel-Nielsen coordinates.} Notice that any orientable topological surface of genus $g \geq 2$ can be constructed by glueing $2g-2$ pairs of pants, that is, spheres with three boundary components, along their boundaries. See Figure \ref{fig:pants} for an example. A similar construction can also be considered for hyperbolic surfaces. Indeed, cutting an orientable hyperbolic surface $X$ of genus $g \geq 2$ along any collection of $3g-3$ disjoint simple closed geodesics yields $2g-2$ hyperbolic pairs of pants with geodesic boundary components. This class of pairs of pants is very rigid, as the following exercise shows.

\begin{figure}[h!]
	\centering
	\includegraphics[width=.4\textwidth]{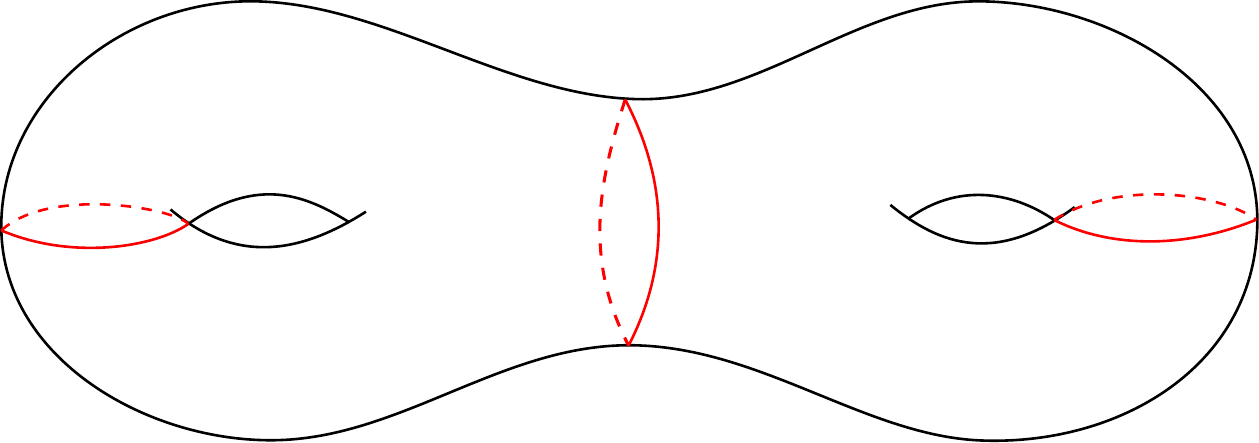}
	\caption{Pair of pants decomposition of a genus $2$ surface.} \label{fig:pants} 
\end{figure}

\begin{exercise}
	\label{ex:pants_rigid}
	Show that, for every $a,b,c \in \mathbf{R}^+$, there exists a unique, up to isometry, hyperbolic pair of paints with geodesic boundary components of lengths $a,b,c$. \textit{Hint: Cutting a hyperbolic pair of pants with geodesic boundary components along the orthogeodesics joining its boundary components yields a pair of isometric hyperbolic right-angled hexagons. See Figure \ref{fig:hex_cut}.}
\end{exercise}

\begin{figure}[h!]
	\centering
	\includegraphics[width=.25\textwidth]{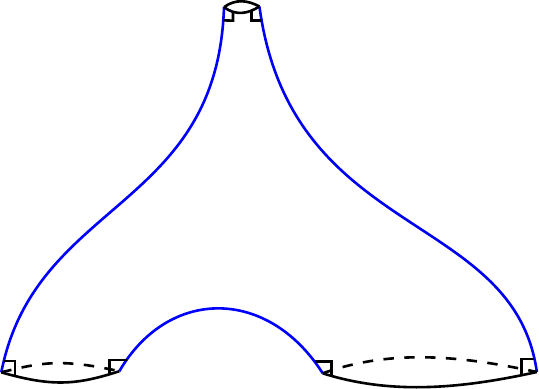}
	\caption{Cutting a hyperbolic pair of pants into isometric right-angled hexagons.} \label{fig:hex_cut} 
\end{figure}

By Exercise \ref{ex:pants_rigid}, the hyperbolic pairs of pants obtained by cutting an orientable hyperbolic surface $X$ of genus $g \geq 2$ along a maximal collection of disjoint simple closed geodesics are determined, up to isometry, by the lengths of the geodesics one cuts along. Glueing back these pairs of pants following the same original pattern allows us to recover $X$. One needs to be careful at this point as there are several ways in which one can glue back these pants along their cuffs. Indeed, for each geodesic one cutted $X$ along there is a full circle worth of different twist with which one can glue back the adjacent pairs of pants. If one is interested in marked hyperbolic surfaces rather than just hyperbolic surfaces, there is actually a full line worth of different twists one can glue with respect to.

More concretely, one can deform (marked) oriented hyperbolic surfaces using the following operation: given a (marked) oriented hyperbolic surface $X$, a simple closed geodesic $\gamma$ on $X$, and $t \in \mathbf{R}$, cut $X$ along $\gamma$ and glue the resulting surface back along $\gamma$ with a twist of $t$ units of hyperbolic length to the right with respect to the orientation of $X$. See Figure \ref{fig:FN_twist}. This operation is known as a Fenchel-Nielsen twist. Given a simple closed curve $\gamma$ on $S_g$ and a marked hyperbolic structure on $X \in \mathcal{T}_g$, the point $T_\gamma.X \in \mathcal{T}_g$ is equal to the marked hyperbolic structure obtained by doing a Fenchel-Nielsen twist of parameter $t = \ell_{\gamma}(X)$ along the unique geodesic representative of $\gamma$ on $X$.

\begin{figure}[h]
	\centering
	\begin{subfigure}[b]{0.4\textwidth}
		\centering
		\includegraphics[width=.85\textwidth]{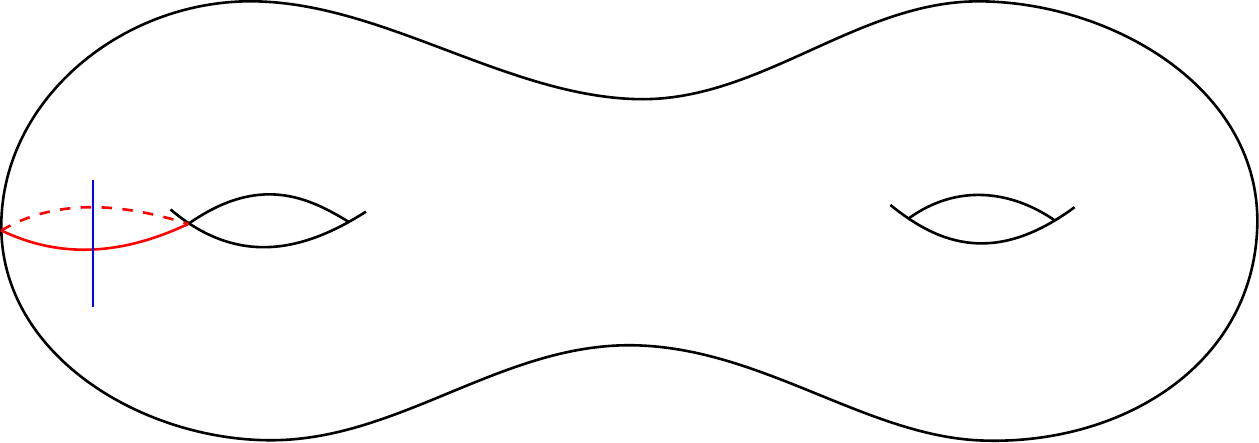}
		\caption{Before twist.}
	\end{subfigure}
	\quad \quad \quad
	~ 
	\begin{subfigure}[b]{0.4\textwidth}
		\centering
		\includegraphics[width=.85\textwidth]{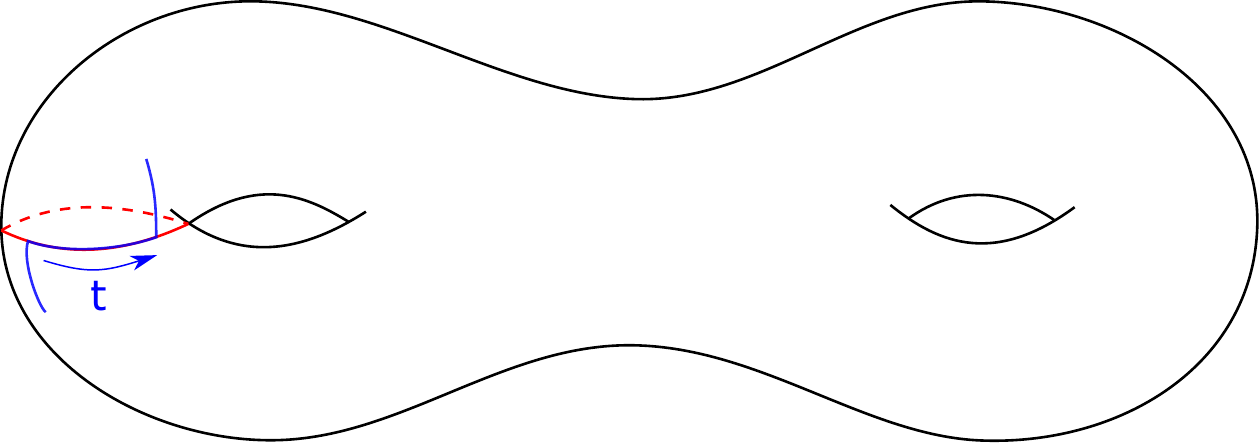}
		\caption{After twist.}
	\end{subfigure}
	\caption{Fenchel Nielsen twist along a simple closed curve (in red).} 
	\label{fig:FN_twist}
\end{figure}

The discussion above leads us to introduce Fenchel-Nielsen coordinates. A pair of pants decomposition of $S_g$ is a maximal collection of disjoint simple closed curves on $S_g$. Fix a pair of pants decomposition $\mathcal{P} := \smash{(\gamma_i)_{i=1}^{3g-3}}$ of $S_g$. Given a marked hyperbolic structure $X \in \mathcal{T}_g$, for every $i \in \{1,\dots,3g-3\}$ denote $\ell_i(X) := \ell_{\gamma_i}(X)$ and let $\tau_i(X)$ be the twist of $X$ at the geodesic representative of $\gamma_i$. The parameters $(\ell_i,\tau_i)_{i=1}^{3g-3} \in \smash{(\mathbf{R}^+ \times \mathbf{R})^{3g-3}}$ provide global coordinates for $\mathcal{T}_g$ known as Fenchel-Nielsen coordinates. In particular, $\mathcal{T}_g$ is homeomorphic to an open ball of dimension $6g-6$. For every $i \in \{1,\dots,3g-3\}$, the action of the Dehn twist $T_{\gamma_i}$ in Fenchel-Nielsen coordinates corresponds to leaving all coordinates constant safe for $\tau_i$ which gets changed by the transformation $\tau_i \mapsto \tau_i + \ell_i$.

\begin{exercise}
	Using the definition of Fenchel-Nielsen coordinates, the collar lemma, Dehn twists, and your geometric intuition, come up with an intuitive explanation of the following fact: a marked hyperbolic structure on Teichmüller space escapes to infinity, i.e., leaves every compact set, if and only if one of its geodesics becomes arbitrarily long. 
\end{exercise}

Let us highlight an interesting feature of geodesic pair of pants decompositions of hyperbolic surfaces. By work of Bers \cite{Ber85}, every closed, connected, oriented hyperbolic surface admits a geodesic pair of pants decompositions whose cuffs have lengths bounded by a linear function of $g$. Buser conjectured this result can be improved to ensure the cuffs have lengths bounded by a linear function of $\sqrt{g}$ \cite{Bus92}. This conjecture remains open until today, even for random hyperbolic surfaces.

\begin{exercise}
	\label{ex:pants}
	Show that there are finitely many pair of pants decompositions of $S_g$ up to homeomorphism. Can you give an asymptotic estimate for the number of such equivalence classes? \textit{Hint: If $I_N$ denotes the number of isomorphism classes of cubic multigraphs on $N$ vertices then, as $N \to \infty$,
	\[
	I_N \sim \frac{e^2}{\sqrt{\pi N}} \cdot \left( \frac{3N}{4e}\right)^{N/2}.
	\]}
\end{exercise}

\begin{exercise}
	\label{ex:compactness}
	Use Bers's theorem and Exercise \ref{ex:pants} to show that for every $\epsilon > 0$ the subset $K_\epsilon \subseteq \mathcal{M}_g$ of genus $g$ hyperbolic surfaces all of whose closed geodesics have length $\geq \epsilon$ is compact. This result is commonly known as Mumford's compactness criterion. \textit{Hint: Using Fenchel-Nielsen coordinates write $K_\epsilon \subseteq \mathcal{M}_g$ as a union of finitely many projections of compact subsets of $\mathcal{T}_g$.}
\end{exercise}

\subsection*{The Weil-Petersson measure on Teichmüller space.} The discussion above suggests considering the following volume form on Teichmüller space: Given a set of Fenchel-Nielsen coordinates $(\ell_i,\tau_i)_{i=1}^{3g-3} \in \smash{(\mathbf{R}^+ \times \mathbf{R})^{3g-3}}$, denote by $v_{\mathrm{wp}}$ the volume form on $\mathcal{T}_g$ given by
\begin{equation}
\label{eq:wol1}
v_{\mathrm{wp}} := \bigwedge_{i=1}^{3g-3} d\ell_i \wedge d\tau_i.
\end{equation}
This volume form is independent of the choice of Fenchel-Nielsen coordinates and in particular is mapping class group invariant. This fact is a consequence of a deep result of Wolpert known as Wolpert's magic formula \cite{Wol83}. We refer to the volume form $v_{\mathrm{wp}}$ above as the Weil-Petersson volume form of $\mathcal{T}_g$. We refer to the corresponding measure $\mu_{\mathrm{wp}}$ as the Weil-Petersson measure of $\mathcal{T}_g$.

\subsection*{Local pushforwards of measures.} In the upcoming discussion we make use of the following abstract measure theoretic construction. Let $X$ be a locally compact, Hausdorff, second countable topological space endowed with a properly discontinuous action of a discrete group $G$. Notice that $X/G$ is also locally compact, Hausdorff, and second countable. Let $\pi \colon X \to X/G$ be the corresponding quotient map. As the action of $G$ on $X$ is properly discontinuous, one can cover $X$ by open subsets $U \subseteq X$ invariant under the action of finite subgroups $\Gamma_U < G$ and such that $gU \cap U = \emptyset$ for all $g \in G \setminus \Gamma_U$. Open sets $U/\Gamma_U \subseteq X/G$ of this form will be refered to as well covered.

Given a locally finite $G$-invariant Borel measure $\mu$ on $X$, there exists a unique locally finite Borel measure $\pi_\#  \mu$ on $X/G$ satisfying the following property: If $U/\Gamma_U \subseteq X/G$ is a well covered open set, 
\begin{equation}
	\label{eq:stab_fact}
	\left(\pi_\#  \mu\right)|_{U/\Gamma_U} =  \smash{\frac{1}{|\Gamma_U|} \cdot \left(\pi|_U\right)_* \left(\mu|_U\right)},
\end{equation}
where $(\pi|_U)_* (\mu|_U)$ denotes the usual pushforward of the measure $\mu|_U$ through the map $\pi|_U$. We refer to the measure $\pi_\#  \mu$ as the local pushfoward of $\mu$ to $X/G$,

\begin{exercise}
	Check that the definition of $\pi_\#  \mu$  gives rise to a unique well defined measure on $X/G$.
\end{exercise}

\subsection*{The Weil-Petersson measure on moduli space.} Recall that the quotient $\mathcal{M}_g := \mathcal{T}_g /\mcg$ is the moduli space of genus $g$ hyperbolic surfaces. As the action of $\mcg$ on $\mathcal{T}_g$ is properly discontinuous and preserves the Weil-Petersson measure $\mu_{\mathrm{wp}}$, one can consider the local pushforward of $\mu_{\mathrm{wp}}$ to $\mathcal{M}_g$. We denote this measure by $\widehat{\mu}_{\mathrm{wp}}$ and refer to it as the Weil-Petersson measure of $\mathcal{M}_g$. 

\begin{exercise}
	\label{ex:wp_vol}
	Using Bers's theorem and Exercise \ref{ex:pants}, show that the Weil-Petersson measure $\widehat{\mu}_{\mathrm{wp}}$ on $\mathcal{M}_g$ is finite. Can you give a bound on the total Weil-Petersson measure $\widehat{\mu}_{\mathrm{wp}}(\mathcal{M}_g)$? \textit{Hint: Follow a similar approach as in Exercise \ref{ex:compactness}.}
\end{exercise}

Mirzakhani and Zograf \cite{MZ15} proved asymptotic estimates for the total Weil-Petersson volumes $\widehat{\mu}_{\mathrm{wp}}(\mathcal{M}_g)$ as $g \to \infty$ that wildly differ from the rough upper bound one obtains in Exercise \ref{ex:wp_vol}. The finiteness of the Weil-Petersson measure on moduli space suggests studying random hyperbolic surfaces sampled according to this measure. This has become a very active field of study in recent years. As a starting point for curious readers we recommend \cite{Mir13,MZ15,MP19,Mo20,MT20,LW21}.

\subsection*{Simple closed multi-curves.} We refer to a parametrized simple closed curve on $S_g$ up to homotopy and orientation reversal as a simple closed curve. We will always assume simple closed curves on $S_g$ are homotopically non-trivial. The group $\mcg$ acts naturally on these equivalence classes. Given a marked hyperbolic structure $X \in \mathcal{T}_g$, every simple closed curve on $S_g$ corresponds to a unique simple closed geodesic on $X$. Through this perspective, counting problems for simple closed geodesics on $X$ can be easily reformulated as counting problems for simple closed curves on $S_g$.

\begin{exercise}
	The $9g-9$ theorem, see for instance \cite[Theorem 10.7]{FM11}, guarantees that a marked hyperbolic structure $X \in \mathcal{T}_g$ is completely determined by its simple marked length spectrum, i.e., by the function which to every simple closed curve $\gamma$ on $S_g$ assigns the length $\ell_{\gamma}(X)$ of its unique geodesic representative with respect to $X$. Using this theorem and Dehn-Thurston coordinates show that the kernel of the action of $\mcg$ on $\mathcal{T}_g$ is equal to the kernel of the action of $\mcg$ on the set of simple closed curves on $S_g$.
\end{exercise}

More generally, a simple closed multi-curve on $S_g$ is a formal sum $\gamma = \smash{\sum_{i=1}^k} a_i \gamma_i$ of distinct simple closed curves on $S_g$ with real positive weights $a_i \in \mathbf{R}^+$.  Given a marked hyperbolic structure $X \in \mathcal{T}_g$ we define $\ell_{\gamma}(X) := \smash{\sum_{i=1}^k a_i \ell_{\gamma_i}(X)}$. An integral simple closed multi-curve on $S_g$ is a simple closed multi-curve all of whose weights are integral. The group $\mcg$ acts naturally on simple closed multi-curves by acting on each of the components of the formal sum. We think of $\mcg$ acting on integral simple closed multi-curves as an analogue of $\mathrm{SL}(2,\mathbf{Z})$ acting on $\mathbf{Z}^2$.

We say two simple closed multi-curves on $S_g$ have the same topological type if they belong to the same mapping class group orbit. More generally, we say two simple closed multi-curves on homeomorphic surfaces have the same topological type if there exists a homeomorphism between the surfaces mapping one simple closed multi-curve to the other. See Figure \ref{fig:type} for an example. We think of integral simple closed multi-curves on $S_g$ of a fixed topological type as an analogue of $\mathbf{Z}^2_{\mathrm{prim}}$.

\begin{figure}[h]
	\centering
	\begin{subfigure}[b]{0.4\textwidth}
		\centering
		\includegraphics[width=.85\textwidth]{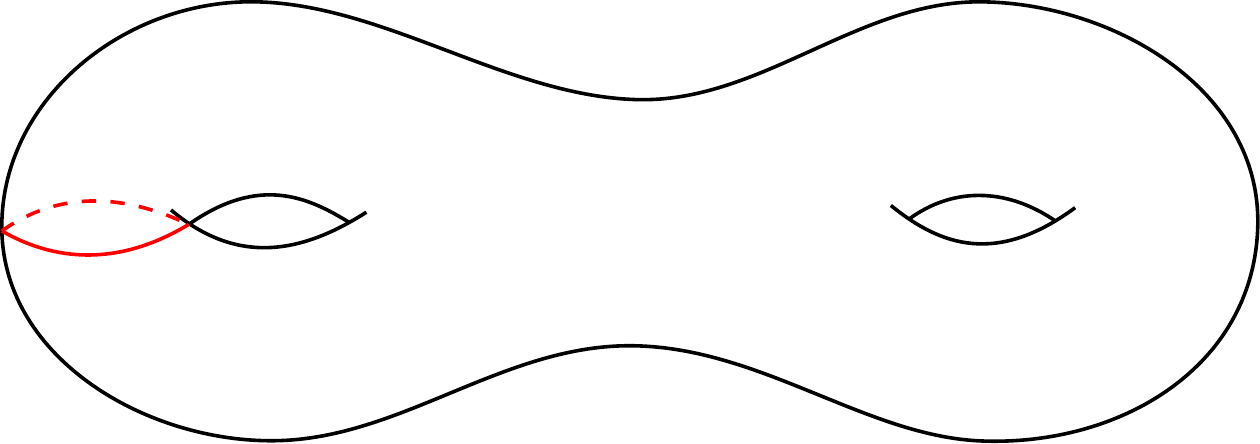}
		\caption{Non-separating simple closed curve.}
	\end{subfigure}
	\quad \quad \quad
	~ 
	\begin{subfigure}[b]{0.4\textwidth}
		\centering
		\includegraphics[width=.85\textwidth]{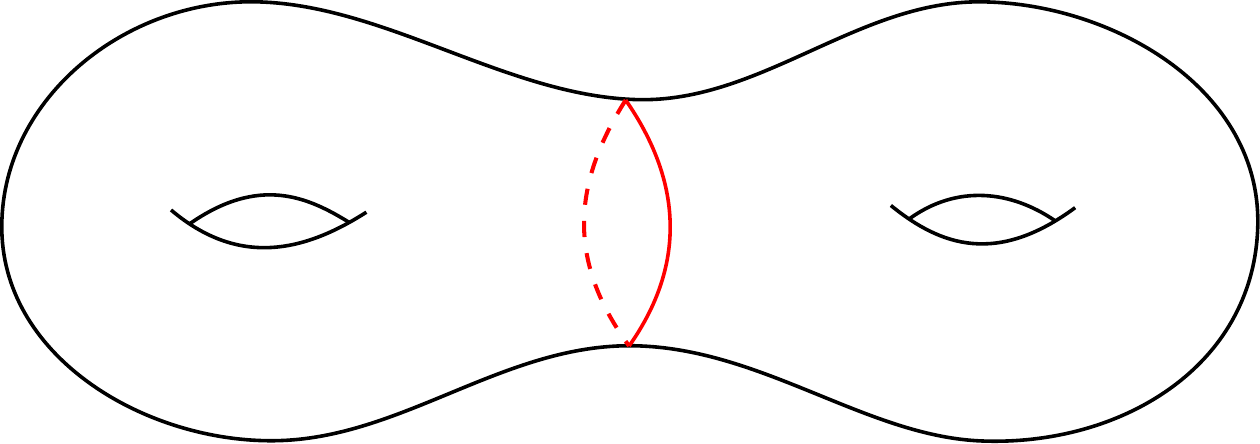}
		\caption{Separating simple closed curve.}
	\end{subfigure}
	\caption{Simple closed curves of different topological types on a genus $2$ surface.} 
	\label{fig:type}
\end{figure}

\subsection*{Measured geodesic laminations.} Just as the integer lattice $\mathbf{Z}^2$ sits inside the continuum $\mathbf{R}^2$, we would like to have a continuum interpolating between integral simple closed multi-curves on $S_g$. Although we will not need it explicitely, let us give a geometric description of how objects in this continuum look like. Fix a marked hyperbolic structure $X \in \mathcal{T}_g$. A geodesic lamination on $X$ is a closed subset of $X$ that can be written as a disjoint union of simple geodesics. The most basic example of a geodesic lamination is a union of disjoint simple closed geodesics. See Figure \ref{fig:lamination} for a more complicated example. Very commonly, the intersection of a geodesic lamination with a transverse arc is a fractal set, making it is hard to draw non-trivial examples.

\begin{figure}[h!]
	\centering
	\includegraphics[width=.4\textwidth]{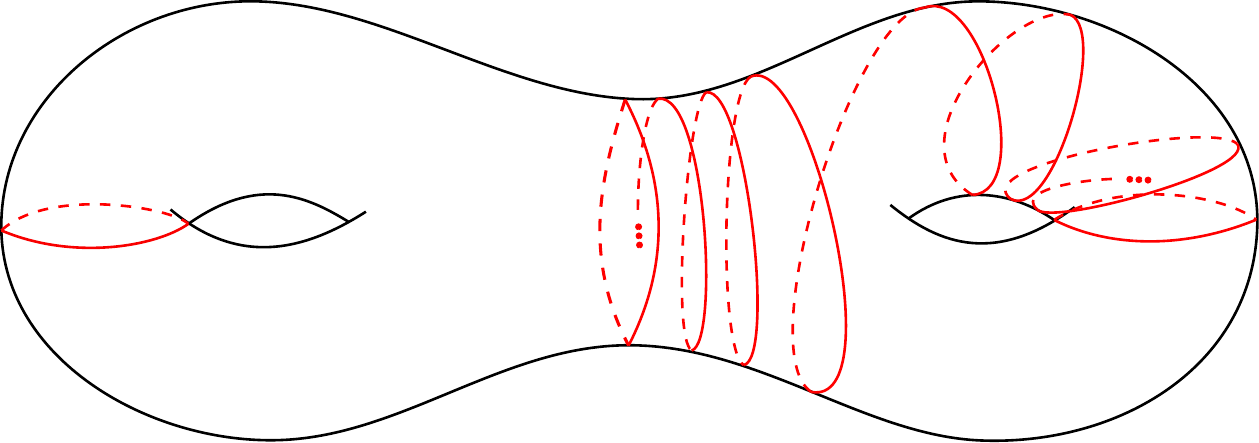}
	\caption{A non-trivial example of a geodesic lamination on a genus $2$ surface.} \label{fig:lamination} 
\end{figure}

A measured geodesic lamination is a geodesic lamination endowed with a fully supported invariant transverse measure. The transverse measure assigns a finite Borel measure to every arc transverse to the the lamination. This assignment is invariant under splitting of arcs and homotopies preserving the leaves of the lamination. Not every lamination admits a fully supported invariant transverse measure. 

\begin{exercise}
	Show that the geodesic lamination in Figure \ref{fig:lamination} does not admit a fully supported invariant transverse measure. \textit{Hint: Consider an arc going across the middle closed geodesic.}
\end{exercise} 

The different spaces of measured geodesic laminations obtained as $X$ varies over $\mathcal{T}_g$ can be canonically identified to each other. We will denote by $\mathcal{ML}_g$ any such space and refer to it as the space of measured geodesic laminations on $S_g$. Simple closed multi-curves embed naturally into $\ml$ by considering their geodesic representatives and endowing them with appropriately weighted transverse dirac measures. We denote by $\ml(\mathbf{Z}) \subseteq \ml$ the set of integral simple closed multi-curves on $S_g$. 

The space $\ml$ can be topologized in such a way that rationally weighted simple closed multi-curves on $S_g$ are dense in it. The action of $\mcg$ on simple closed multi-curves extends continuously to an action on $\ml$. We think of $\mcg$ acting on $\ml$ as an analogue of $\mathrm{SL}(2,\mathbf{Z})$ acting on $\mathbf{R}^2$. By work of Thurston \cite{T80}, the length $\ell_\lambda(X) > 0$ of a measured geodesic lamination $\lambda \in \ml$ with respect to a marked hyperbolic structure $X \in \mathcal{T}_g$ can be defined in a unique continuous way extending the definition on simple closed multi-curves introduced above.

\subsection*{Dehn-Thurston coordinates.} In a similar spirit to how hyperbolic surfaces can be constructed by glueing hyperbolic pairs of pants with geodesic boundary components, integral simple closed multi-curves on surfaces can be constructed by glueing simple arc systems on pairs of pants. By an arc on a pair of pants we will always mean an arc joining two of its boundary components. Every simple arc on a pair of pants is isotopic to exactly one of the six arcs represented in Figure \ref{fig:DT}. Every simple arc system on a pair of pants is isotopic to an arc system contructed by taking disjoint parallel copies of these arcs. Given an assignment of non-negative integers $(a,b,c) \in \mathbf{N}$ to the boundary components of a pair of pants satisfying $a+b+c \in 2\mathbf{N}$, there exists a unique, up to isotopy, arc system realizing $(a,b,c)$ as the number of intersections of the arc system with the boundary components.

\begin{figure}[h!]
	\centering
	\includegraphics[width=.25\textwidth]{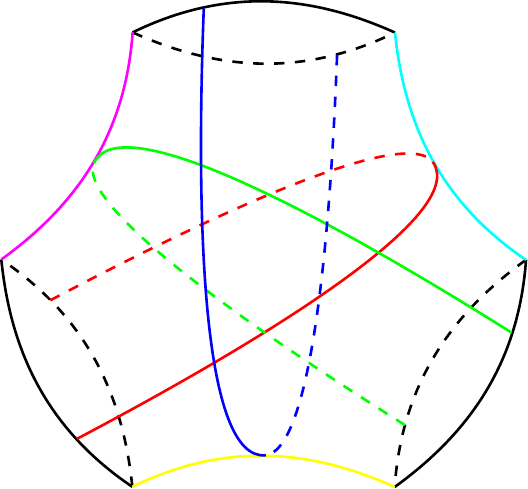}
	\caption{The six isotopy classes of simple arcs in a pair of pants.} \label{fig:DT} 
\end{figure}

Given simple arc systems on pairs of pants glued along two of their boundary components, these arc systems can be glued together if and only if the number of times they intersect the glued boundary components is the same. There exist $\mathbf{Z}$ many different ways of glueing these arc systems depending on how much we twist one with respect to the other before glueing them together.

Let $\mathcal{P}:=(\gamma_i)_{i=1}^{3g-3}$ be a pair of pants decomposition of $S_g$. Following the discussion above we can parametrize integral simple closed multi-curves on $S_g$ in terms of intersection numbers $m_i \in \mathbf{N}$ and twisting numbers $t_i \in \mathbf{Z}$ with respect to the components of $\mathcal{P}$. Consider the parameter space
\[
\Sigma:= \prod_{i=1}^{3g-3} \left(\mathbf{R}_{\geq 0} \times \mathbf{R}\right)/\sim,
\]
where $\sim$ denotes the equivalence relation on $\mathbf{R}_{\geq 0} \times \mathbf{R}$ identifying $(0,t) \sim (0,-t)$ for every $t \in \mathbf{R}$. The parameters $\smash{(m_i,t_i)_{i=1}^{3g-3}}$ give rise to a bijection between the set of integral simple closed multi-cuves on $S_g$ and the set of integral points in $\Sigma$ such that for every complementary region $R$ of $S_{g,n} \backslash \mathcal{P}$ the parameters $m_i$ corresponding to components $\gamma_i$ of $\mathcal{P}$ bounding $R$ add up to an even number. If $m_i = 0$ and $t_i = t \in \mathbf{Z}$, the corresponding simple closed multi-curve has $|t|$ disjoint parallel copies of $\gamma_i$. We refer to the parameters $\smash{(m_i,t_i)_{i=1}^{3g-3}}$ as a set of Dehn-Thurston coordinates.

Although we have not defined twisting numbers precisely, we can easily describe the effect of changing them by a given amount. See Figure \ref{fig:DT_twist} for an example. In particular, for every $i \in \{1,\dots,3g-3\}$, the action of the Dehn twist $T_{\gamma_i}$ in Dehn-Thurston coordinates corresponds to leaving all coordinates constant except for $t_i$ which gets changed by the transformation $t_i \mapsto t_i + m_i$. 

\begin{figure}[h]
	\centering
	\begin{subfigure}[b]{0.4\textwidth}
		\centering
		\includegraphics[width=.35\textwidth]{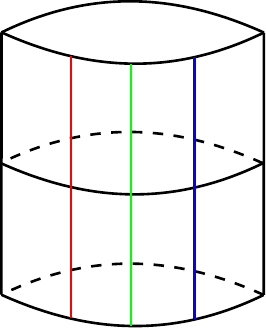}
		\caption{Before change.}
	\end{subfigure}
	\quad \quad \quad
	~ 
	\begin{subfigure}[b]{0.4\textwidth}
		\centering
		\includegraphics[width=.35\textwidth]{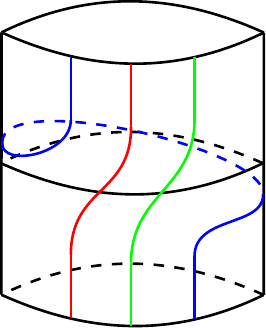}
		\caption{After change.}
	\end{subfigure}
	\caption{The effect of changing a twist coordinate by $t_i \mapsto t_i+1$.} 
	\label{fig:DT_twist}
\end{figure}

By work of Thurston \cite{T80}, the parametrization of $\ml(\mathbf{Z})$ provided by any set of Dehn-Thurston coordinates extends to a homeomorphism between $\ml$ and the parameter space $\Sigma$. This shows in particular that $\ml$ is homeomorphic $\mathbf{R}^{6g-6}$. For more details on the definition and properties of Dehn-Thurston coordinates we refer the reader to \cite[\S 1.2]{PH92}.

\subsection*{The Thurston measure.} Any set of Dehn-Thurston coordinates induces a homeomorphism between $\ml$ and the parameter space $\Sigma$. The Thurston measure $\mu_{\mathrm{Thu}}$ on $\ml$ is the pullback of the Lebesgue measure on $\Sigma$ via this identification. This measure is well defined, i.e., independent of the choice of Dehn-Thurston coordinates. Indeed, in analogy with the case of the Lebesgue measure, one can characterize $\mu_{\mathrm{Thu}}$ as the weak-$\star$ limit of a natural family of rescaled integral simple closed multi-curve counting measures. Consider the natural $\mathbf{R}^+$ action on $\ml$ scaling transverse measures. This action corresponds to the natural scaling action of $\mathbf{R}^+$ on $\Sigma$ via the identification by Dehn-Thurston coordinates. For every $L > 0$ consider the rescaled counting measure
\[
\mu_L := \frac{1}{L^{6g-6}} \sum_{\gamma \in \ml(\mathbf{Z})} \delta_{\frac{1}{L} \cdot \gamma}.
\]

\begin{exercise}
	\label{ex:thu_1}
	Show that the sequence of rescaled counting measures $(\mu_L)_{L > 0}$ on $\ml$ converges in the weak-$\star$ topology as $L \to \infty$ to the pullback of the Lebesgue measure on $\Sigma$ under the identification with $\ml$ induced by any set Dehn-Thurston coordinates.
\end{exercise}

Directly from Exercise \ref{ex:thu_1} we deduce $\mu_{\mathrm{Thu}}$ is invariant with respect to the $\mcg$ action on $\ml$. The definition of $\mu_{\mathrm{Thu}}$ in terms of Dehn-Thurston coordinates guarantees it satisifies the following scaling property: for every Borel measurable subset $A \subseteq \mathcal{ML}_{g}$ and every $t > 0$,
\begin{equation}
\label{eq:thu_meas_scale}
\mu_{\text{Thu}}(t \cdot A) = t^{6g-6} \cdot \mu_{\text{Thu}}(A).
\end{equation}

\begin{exercise}
	\label{eq:boundary}
	Let $f \colon \ml \to \mathbf{R}$ be a non-negative, continuous function. Suppose that $f$ is homogeneous with respect to the $\mathbf{R}^+$ scaling action on $\ml$. Using (\ref{eq:thu_meas_scale}) show that, for every $c > 0$, 
	\[
	\mu(f^{-1}(\{c\})) = 0.
	\]
\end{exercise}

\subsection*{Analogies with the lattice setting.} Let us finish this section by highlighting some analogies that will be useful to have in mind when proving Theorem \ref{theo:main}. The proof of Theorem \ref{theo:main} will follow the same approach as the proof of Theorem \ref{theo:main_lec_1}, but will use the analogies in Table \ref{tb:1} together with important results of Mirzakhani that will be discused in \S 4.

\begin{table}[h]
	\begin{tabular}{c | c }
		Lattice setting & Surface setting \\
		\hline \hline 
		$\text{SL}(2,\mathbf{Z})$ & $\text{Mod}_g$ = mapping class group of $S_g$ \\
		$\mathbf{R}^2$  & $\mathcal{ML}_g$ = measured geodesic laminations on $S_g$\\
		$\mathbf{Z}^2_{\text{prim}}$ & $\text{Mod}_g \cdot \gamma$ = mapping class group orbit\\
		$\text{Leb}_{\mathbf{R}^2}$ & $\mu_{\text{Thu}}$ = Thurston measure \\
		$B_A$ & $B_X = \{\lambda \in \mathcal{ML}_g \ | \ \ell_\lambda(X) \leq 1\}$\\
	\end{tabular}
	\caption{Analogies between the lattice and surface settings.}
	\label{tb:1}
\end{table}

\section{Weil-Petersson volumes and Mirzakhani's integration formulas}

\subsection*{Outline of this section.} In this section we discuss two remarkable results of Mirzhakhani that will play a crucial role in the proof of Theorem \ref{theo:main}. The first of these results corresponds to the fact that the total Weil-Petersson volume of any moduli space of hyperbolic surfaces with totally geodesic boundary components is a polynomial on the lengths of the boundary components. See Theorem \ref{theo:vol_pol}. The second of these results corresponds to Mirzakahani's famous integration formula over moduli space. See Exercise \ref{ex:mir_int}. We focus most of our attention on the second of these results. We sketch a proof of the first of these results for tori with one boundary component at the end of this section.

\subsection*{Surfaces with boundary.} The first result we discuss concerns moduli spaces of hyperbolic surfaces with totally geodesic boundary components and their total Weil-Petersson volumes. To state this result precisely we first introduce some notation. As many of the definitions that follow are analogous to the ones discussed in the previous section, we do not spend much time covering them.

Let $g,b \geq 0$ be a pair of non-negative integers such that $2 - 2g - b < 0$. Fix a connected, oriented, compact surface $\smash{S_{g,b}}$ of genus $g$ with $b$ labeled boundary components $\beta_1,\dots,\beta_{b}$.  Denote by $\smash{\text{Mod}_{g,b}}$ the mapping class group of $\smash{S_{g,b}}$, that is, the group of orientation preserving homeomorphisms of $\smash{S_{g,b}}$ that setwise fix each boundary component, up to homotopy setwise fixing each boundary component.

Let $\smash{\mathbf{L}:= (L_i)_{i=1}^{b}} \in \smash{(\mathbf{R}_{>0})^{b}}$ be a vector of positive real numbers. Denote by $\mathcal{T}_{g,b}(\mathbf{L})$ the Teichmüller space of  marked, oriented hyperbolic structures on $S_{g,b}$ with labeled geodesic boundary components whose lengths are given by $\mathbf{L}$, up to isotopy fixing each boundary component setwise. The mapping class group $\smash{\text{Mod}_{g,b}}$ acts properly discontinuously on $\mathcal{T}_{g,b}(\mathbf{L})$ by changing the markings. The quotient $\mathcal{M}_{g,b}(\mathbf{L}) := \mathcal{T}_{g,b}(\mathbf{L}) / \text{Mod}_{g,b}$ is the moduli space of oriented hyperbolic structures on $S_{g,b}$ with labeled geodesic boundary components whose lengths are given by $\mathbf{L}$. 

\subsection*{Weil-Petersson volumes.} The Teichmüller space $\mathcal{T}_{g,b}(\mathbf{L})$ can be parametrized using Fenchel-Nielsen coordinates in a similar way as for closed surfaces. More concretely, for any pair of pants decomposition $\mathcal{P}:= \smash{(\gamma_i)_{i=1}^{3g-3+b}}$ of $S_{g,b}$, the length and twist parameters $(\ell_i,\tau_i)_{i=1}^{3g-3+b}$ of marked hyperbolic structures $X \in \mathcal{T}_{g,b}(\mathbf{L})$ with respect to the components of $\mathcal{P}$ provide a global coordinate system for $\mathcal{T}_{g,b}(\mathbf{L})$. The Weil-Petersson volume form $v_{\mathrm{wp}}$ of $\mathcal{T}_{g,b}(\mathbf{L})$ can be defined as follows,
\begin{equation}
\label{eq:wol2}
v_{\mathrm{wp}} := \bigwedge_{i=1}^{3g-3+b} d\ell_i \wedge d\tau_i.
\end{equation}
By work of Wolpert \cite{Wol83}, this volume form is well defined, independent of the choice of Fenchel-Nielsen coordinates, and in particular mapping class group invariant. We refer to the measure induced by $v_\mathrm{wp}$ as the Weil-Petersson measure of $\mathcal{T}_{g,b}(\mathbf{L})$ and denote it by $\mu_{\mathrm{wp}}$.

On the moduli space $\mathcal{M}_{g,b}(\mathbf{L}) := \mathcal{T}_{g,b}(\mathbf{L}) / \text{Mod}_{g,b}$ consider the local pushforward $\widehat{\mu}_{\mathrm{wp}}$ of the Weil-Petersson measure $\mu_{\mathrm{wp}}$. We refer to this measure as the Weil-Petersson measure of $\mathcal{M}_{g,b}(\mathbf{L})$. The same arguments suggested in Exercise \ref{ex:wp_vol} can be used to show that the Weil-Petersson measure $\widehat{\mu}_{\mathrm{wp}}$ on $\mathcal{M}_{g,b}(\mathbf{L})$ is finite. The total Weil-Petersson volume of $\mathcal{M}_{g,b}(\mathbf{L})$ will be denoted by
\[
V_{g,b}(\mathbf{L}) := \widehat{\mu}_{\mathrm{wp}}(\mathcal{M}_{g,b}(\mathbf{L})).
\]
By Exercise \ref{ex:pants_rigid}, if the surface $S_{g,b}$ is a pair of pants, i.e., if $g = 0$ and $b = 3$, the moduli space $\mathcal{M}_{g,n}^{b}(\mathbf{L})$ consists of exactly one point. In this case we adopt the convention 
\[
V_{0,3}(\mathbf{L}) := 1.
\]

The following remarkable theorem of Mirzakhani shows that the total Weil-Petersson volumes $V_{g,b}(\mathbf{L}) $ are polynomials on the $\mathbf{L}$ variables. We will later sketch a proof of this result for the case of tori with one boundary component, i.e., for the case $g=1$ and $b=1$.

\begin{theorem} \cite[Theorem 1.1]{Mir07a} \cite[Theorem 1.1]{Mir07c}
	\label{theo:vol_pol}
	Let $g, b \geq 0$ be non-negative integers such that $2 - 2g - b < 0$. The total Weil-Petersson volume
	\[
	V_{g,b}(L_1,\dots,L_{b})
	\]
	is a polynomial of degree $3g-3+b$ on the variables $L_1^2,\dots,L_{b}^2$. Moreover, if 
	\[
	V_{g,b}(L_1,\dots,L_{b}) = \sum_{\substack{\alpha \in (\mathbf{Z}_{\geq 0})^{b}\\ |\alpha| \leq 3g-3+b} } c_\alpha \cdot L_1^{2\alpha_1} \cdots L_{b}^{2\alpha_{b}},
	\]
	where $|\alpha| := \alpha_1 + \cdots + \alpha_{b}$ for every $\alpha \in (\mathbf{Z}_{\geq 0})^{b}$, then $c_\alpha \in \mathbf{Q}_{>0} \cdot \pi^{6g-6 +2b - 2|\alpha|}$. In particular, the leading coefficients of $V_{g,b}(L_1,\dots,L_{b})$ belong to $\mathbf{Q}_{> 0}$.
\end{theorem}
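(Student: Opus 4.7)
The plan is to prove Theorem \ref{theo:vol_pol} by induction on the complexity $3g-3+b$ of $S_{g,b}$, combining two of Mirzakhani's major tools: her generalization of McShane's identity to bordered hyperbolic surfaces, and her integration formula over moduli space (Exercise \ref{ex:mir_int}). The base case has $3g-3+b=0$, forcing $(g,b)=(0,3)$, and by our convention $V_{0,3}(L_1,L_2,L_3)=1$, which is trivially a polynomial of degree $0$ in the $L_i^2$ with leading coefficient $1 \in \mathbf{Q}_{>0} \cdot \pi^{0}$ (note $6g-6+2b-2|\alpha| = 0$ here).

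For the inductive step, I would apply Mirzakhani's generalized McShane identity on a hyperbolic surface $X \in \mathcal{M}_{g,b}(\mathbf{L})$ with marked boundary component $\beta_1$. This identity expresses $L_1$ as a sum of two kinds of contributions, indexed by isotopy classes of embedded pairs of pants $P \subseteq S_{g,b}$ with $\beta_1$ as one boundary:
\[
L_1 = \sum_{\{\alpha_1,\alpha_2\}} \mathcal{D}\bigl(L_1,\ell_{\alpha_1}(X),\ell_{\alpha_2}(X)\bigr) + \sum_{i=2}^{b}\sum_{\gamma} \mathcal{R}\bigl(L_1,L_i,\ell_{\gamma}(X)\bigr),
\]
where $\mathcal{D}$ and $\mathcal{R}$ are explicit elementary functions (built from logarithms of hyperbolic trig expressions). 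Integrating both sides against $\widehat{\mu}_{\mathrm{wp}}$ over $\mathcal{M}_{g,b}(\mathbf{L})$, the left-hand side becomes $L_1 \cdot V_{g,b}(\mathbf{L})$. For the right-hand side, I would apply Mirzakhani's integration formula to each mapping class group orbit of multi-curves. This unfolds each orbital sum into an integral over $\mathbf{R}_+$ or $\mathbf{R}_+^2$ of the relevant $\mathcal{D}$ or $\mathcal{R}$ weighted by $x$, $xy$, and by products of Weil-Petersson volumes of the strictly simpler moduli spaces obtained by cutting $S_{g,b}$ along the corresponding curves.

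By the inductive hypothesis, each such product of volumes is a polynomial in $L_1^2,\dots,L_b^2$ and in $x^2$, $y^2$ whose coefficients are positive rationals times appropriate powers of $\pi$. The proof therefore reduces to checking, once and for all, that the basic integrals
\[
\int_0^\infty \mathcal{R}(L_1,L_i,x)\, x^{2k+1}\, dx, \qquad \int_0^\infty\!\!\int_0^\infty \mathcal{D}(L_1,x,y)\, x^{2k+1} y^{2j+1}\, dx\, dy
\]
are polynomials in $L_1^2$, $L_i^2$ (respectively $L_1^2$) that are divisible by $L_1$ and whose coefficients lie in the required rational multiples of $\pi$-powers. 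These integrals are evaluated by substitutions that linearize the logarithms inside $\mathcal{D}$ and $\mathcal{R}$ and reduce everything to integrals of rational or elementary functions; the powers of $\pi$ come in from integrals of the form $\int_0^\infty u^{2k+1}/(e^u \pm 1)\, du$, which produce Bernoulli-type constants with the correct $\pi$-scaling.

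Assembling these pieces yields $L_1 \cdot V_{g,b}(\mathbf{L})$ as a polynomial in $L_1^2,\dots,L_b^2$ that is divisible by $L_1$, and dividing by $L_1$ gives the desired polynomial expression for $V_{g,b}(\mathbf{L})$. A dimension count (the total Weil-Petersson volume has complex dimension $3g-3+b$, hence real dimension $6g-6+2b$, and each unfolding loses $2$ units of dimension per cut curve while the $\mathcal{D},\mathcal{R}$ integrations restore them weighted by $L_1^{2\alpha_1}$) shows the top degree in the $L_i^2$ is exactly $3g-3+b$ and pins down the $\pi$-exponents. The main obstacle will be the bookkeeping in the last step: verifying that the elementary but intricate integrals of $\mathcal{D}$ and $\mathcal{R}$ genuinely return polynomials with the sharp coefficient structure claimed (in particular positivity of leading coefficients and the exact exponent $6g-6+2b-2|\alpha|$ of $\pi$), rather than just polynomials with complex-analytic coefficients.
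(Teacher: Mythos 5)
Your plan follows the same route the paper indicates for this theorem — Mirzakhani's original proof via her generalization of McShane's identity fed into her integration formula to produce a recursion in complexity, which is exactly what the paper sketches for the base case $V_{1,1}(L)$ before citing \cite[\S 5]{Mir07a} for the general statement. But as written your induction does not actually cover the theorem. The inductive step applies the generalized McShane identity at a distinguished boundary component $\beta_1$, so it only advances the induction when $b \geq 1$; the theorem, however, allows $b = 0$ with $g \geq 2$, and a closed surface has no boundary on which to run McShane's identity. The recursion on complexity will therefore hit $(g,b) = (2,0)$ at level $3$ and be unable to proceed. Mirzakhani handles closed surfaces by a separate device (relating $V_{g,0}$ to the behaviour of $V_{g,1}$ at degenerate boundary length, or equivalently via the bordered case with boundary length tending to $0$); your plan needs an explicit statement of how $V_{g,0}$ is reached.

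Beyond that, you have correctly located — and deferred — the entire content of the coefficient statement to ``bookkeeping.'' Two things are worth flagging there. First, to conclude $V_{g,b}$ is a polynomial you must divide the integrated McShane identity by $L_1$, which requires showing the right-hand side is divisible by $L_1$; this is not automatic from your outline. Mirzakhani sidesteps it by differentiating in $L_1$ first (exactly the hint the paper gives after its $V_{1,1}$ computation), obtaining a recursion for $\partial(L_1 V_{g,b})/\partial L_1$ in which the kernel moments $\int_0^\infty x^{2k+1}\, \partial_{L_1}\mathcal{D}\,dx$ and $\int_0^\infty x^{2k+1}\, \partial_{L_1}\mathcal{R}\,dx$ have closed forms involving $\zeta(2k)$. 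These produce precisely the positive rational multiples of even powers of $\pi$, and antidifferentiating then gives $L_1$-divisibility for free. Second, showing that the exponent of $\pi$ in $c_\alpha$ is exactly $6g-6+2b-2|\alpha|$ and that the top total degree is exactly $3g-3+b$ (not merely $\leq$) requires tracking degrees through this recursion carefully; your dimension-count heuristic is a good guide but is not itself a proof. These are not wrong steps, but they are where the theorem lives, and a proposal that names them as ``the main obstacle'' without sketching the resolution has not yet earned the conclusion.
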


\subsection*{The local change of variables formula.} We now introduce a local change of variables formula that will be used in forthcoming discussions. Let $X$ be a locally compact, Hausdorff, second countable topological space endowed with a properly discontinuous action of a discrete group $G$ and $f \colon X \to \mathbf{R}$ be a measurable, non-negative function. Suppose that $f$ is invariant with respect to a subgroup $H < G$, that is, $f(h.x) = f(x)$ for every $x \in X$ and every $h \in H$. Denote by $\smash{\widetilde{f}} \colon X/H \to \mathbf{R}$ the measurable, non-negative function induced by $f$ on $X/H$. Consider the measurable non-negative function $\smash{\widehat{f}} \colon X/G \to \mathbf{R}$ which to every $x \in X/G$ assigns the value
\[
\widehat{f}(x) := \sum_{g \in G/H} f(g.x).
\]
Let $\mu$ be a locally finite $G$-invariant Borel measure on $X$. Denote by $\smash{\widetilde{\mu}}$ the local pushforward of $\mu$ to $X/H$ and by $\smash{\widehat{\mu}}$ the local pushforward of $\mu$ to $X/G$. The following exercise can be interpreted as a local change of variables formula.

\begin{exercise}
	\label{ex:local_cv}
	Show that, in the setting described above, the following integration formula holds,
	\[
	\int_{X/G} \widehat{f}(x) \thinspace d\widehat{\mu}(x) = \int_{X/H} \widetilde{f}(y) \thinspace d\widetilde{\mu}(y).
	\]
	\textit{Hint: Use a partition of unity of $X/G$ to reduce to the case where f is supported on the preimage of a well covered neighborhood.}
\end{exercise}

The following exercise, which corresponds to Proposition \ref{prop:siegel_basic}, is a classical application of the local change of variables formula in the setting of lattices.

\begin{exercise}
	\label{ex:long}
	Let $X := \mathrm{SO}(2,\mathbf{R})\backslash\mathrm{SL}(2,\mathbf{R})$ and $G := \mathrm{SL}(2,\mathbf{Z})$ acting on $X$ via right multiplication. Recall that $X$ can be identified with the space of marked, oriented, unimodular lattices on $\mathbf{R}^2$ up to rotation via the map which sends $A \in X$ to $A \cdot \mathbf{Z}^2 \subseteq \mathbf{R}^2$. Denote by $\| \cdot \|$ the Euclidean norm on $\mathbf{R}^2$ and by $e_1 := (1,0) \in \mathbf{R}^2$ the first canonical vector of $\mathbf{R}^2$. For every $L > 0$ consider the function $f \colon X \to \mathbf{R}$ which to every $A \in X$ assigns the value $f(A) := \mathbbm{1}_{\| A \cdot e_1\| \leq L}$. Notice that $f$ is invariant with respect to the subgroup $H < G$ generated by the unipotent matrix
	\[
	u_1 := \left(\begin{array}{c c}
	1 & 1 \\
	0 & 1
	\end{array} \right).
	\]
	Indeed, $H$ is the stabilizer of $e_1$ with respect to the linear action of $G$ on $\mathbf{R}^2$. Recall that $\mathcal{M}_1 := X/G$ can be identified with the space of oriented, unimodular lattices on $\mathbf{R}^2$ up to rotation. Recall the definition of the counting function $p(\Lambda,L)$ in (\ref{eq:prim_lat_count}). Show that for every $\Lambda \in \mathcal{M}_1$ and every $L > 0$,
	\[
	\smash{\widehat{f}}(\Lambda) = p(\Lambda, L).
	\]
	Recall that $X$ can be identified with the upper half space $\mathbf{H}^2$ via the map which sends $z \in \mathbf{H}^2$ to the unimodular lattice $\Lambda_z := \mathrm{span}_\mathbf{Z}(c_z e_1, c_z z)\subseteq \mathbf{R}^2$, where $c_z := \Im(z)^{-1/2} > 0$. Show that if $A \in X$ gets identified with $z \in \mathbf{H}^2$ then the following identity holds,
	\[
	\|A \cdot e_1\| = \Im(z)^{-1/2}.
	\]
	In particular, this implies that for every $L > 0$,
	\[
	f(A) = \mathbf{1}_{\Im(z) \geq 1/L^2}.
	\]
	Recall that the identification of $X$ with $\mathbf{H}^2$ conjugates the right action of $G$ on $X$ with the left action of $G$ on $\mathbf{H}^2$ by Möbius transformations. This identification sends the Haar measure $\mu$ on $X$ to the measure $y^{-2} dx dy$ on $\mathbf{H}^2$. Denote by $\widehat{\mu}$ the local pushforward of $\mu$ to $\mathcal{M}_1$ as defined in (\ref{eq:stab_fact}). Notice that this definition differs from the one used in Proposition \ref{prop:siegel_basic} by a factor of $2$. Using Exercise \ref{ex:local_cv}, the fundamental domain in Figure \ref{fig:mod}, and the discussion above, show that 
	\[
	\int_{\mathcal{M}_1} p(\Lambda,L)  \thinspace d\widehat{\mu}(\Lambda)= L^2.
	\]
\end{exercise}

\subsection*{The cut and glue fibration.} We now discuss a remarkable observation due to Mirzakhani regarding how the Weil-Petersson measure on certain quotients of Teichmüller space disintegrates along the fibers of natural projections over products of moduli spaces of lower complexity. This observation is nothing more than a simple consequence of Wolpert's magic formula but its importance cannot be understated.

For the rest of this discussion we fix an integer $g \geq 2$ and a connected, oriented, closed surface $S_g$ of genus $g$. Recall that $\mcg$ denotes the mapping class group of $S_g$. Given a simple closed curve $\alpha$ on $S_{g}$ denote by
$
\text{Stab}_0(\alpha) \subseteq \text{Mod}_{g}
$
the subgroup of mapping classes of $S_{g}$ that fix $\alpha$ up to isotopy together with its orientations. Although $\alpha$ is unoriented, it admits two possible orientations. A mapping class belongs to $\text{Stab}_0(\alpha)$ if it sends each orientation of $\alpha$ back to itself. An ordered simple closed multi-curve on $S_g$ is a tuple $\smash{\gamma := (\gamma_i)_{i=1}^{k}}$ of pairwise disjoint, pairwise non-isotopic simple closed curves on $S_g$. Given such a simple closed multi-curve on $S_{g}$ denote
\[
\text{Stab}_0(\gamma) := \bigcap_{i=1}^k \text{Stab}_0(\gamma_i) \subseteq \text{Mod}_{g}.
\]

For the rest of this discussion we fix an ordered simple closed multi-curve $\gamma := (\gamma_i)_{i=1}^k$ on $S_{g}$. Recall that $\mathcal{T}_g$ denotes the Teichmüller space of marked hyperbolic structures on $S_g$. The quotient 
$
\mathcal{T}_{g}/\text{Stab}_0(\gamma)
$
fibers naturally over a product of moduli spaces of surfaces with boundary of less complexity than $S_{g}$. This fibration, which we refer to as the cut and glue fibration of $\mathcal{T}_{g}/\text{Stab}_0(\gamma)$, will play a crucial role in our discussion of Mirzakhani's integration formula. To describe this fibration in detail we first introduce some notation.

Let $S_{g}(\gamma)$ be the potentially disconnected oriented surface with boundary obtained by cutting $S_{g}$ along the components of $\gamma$. See Figure \ref{fig:cut} for an example. Let $c > 0$ be the number of components of $S_g(\gamma)$. Fixing an orientation on each component of $\gamma$, we can keep track of which components of $S_{g}(\gamma)$ lie to the left and to the right of each component of $\gamma$, so we can label the components of $S_{g}(\gamma)$ in a consistent way, say $\Sigma_j$ with $j \in \{1,\dots,c\}$. As the components of $\gamma$ are labeled and oriented, this induces a labeling of the boundary components of each $\Sigma_j$. Let $g_j,b_j \geq 0$ with $2 - 2g_j - b_j < 0$ be a pair of non-negative integers such that $\Sigma_j$ is homeomorphic to $S_{g_j,b_j}$. Fix a homeomorphism between these surfaces respecting the labeling of their boundary components. 

\begin{figure}[h]
	\centering
	\begin{subfigure}[b]{0.4\textwidth}
		\centering
		\includegraphics[width=.85\textwidth]{pictures/tt_1.pdf}
		\caption{Before cutting.}
	\end{subfigure}
	\quad \quad \quad
	~ 
	\begin{subfigure}[b]{0.4\textwidth}
		\centering
		\includegraphics[width=.85\textwidth]{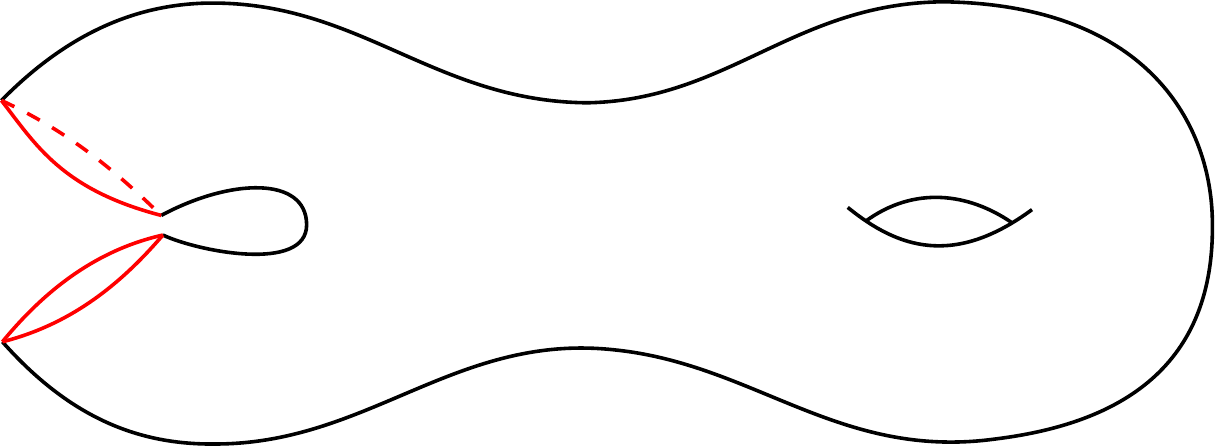}
		\caption{After cutting.}
	\end{subfigure}
	\caption{Cutting a genus $2$ surface along a non-separating simple closed curve.} 
	\label{fig:cut}
\end{figure}

The base of the cut and glue fibration is the space $\Omega_{g}(\gamma)$ of pairs $(\mathbf{L},\mathbf{X})$ where
\begin{gather*}
\mathbf{L} := (\ell_i)_{i=1}^k \in (\mathbf{R}_{>0})^k,\\
\mathbf{X} := (X_j)_{j=1}^c \in \prod_{j=1}^c \mathcal{M}_{g_j,b_j}(\mathbf{L}_j),
\end{gather*}
such that $\mathbf{L}_j \in (\mathbf{R}_{>0})^{b_j}$ is defined using the vector $\mathbf{L} := (\ell_i)_{i=1}^k \in (\mathbf{R}_{>0})^k$ and the correspondence between the labeling of the boundary components of $\Sigma_j$ and the labeling of the components of $\gamma$. 

An ordered simple closed multi-geodesic $\alpha := (\alpha_i)_{i=1}^k$ on a hyperbolic surface $X$ is a tuple of pairwise disjoint simple closed geodesics on $X$. Every ordered simple closed multi-geodesic is in particular an ordered simple closed multi-curve. Two ordered simple closed multi-curves on homeomorphic surfaces are said to have the same topological type if there exists a homeomorphism between the surfaces mapping one simple closed multi-curve to the other respecting their labelings. Recall that $\mathcal{M}_g$ denotes the moduli space of genus $g$ hyperbolic surfaces. The following exercise will play a crucial role in our description of the cut and glue fibration.

\begin{exercise}
	\label{ex:quot}
	Show there exists a one-to-one correspondence between points in the quotient space $\mathcal{T}_{g} /\text{Stab}_0(\gamma)$ and pairs $(X,\alpha)$ where $X \in \mathcal{M}_{g}$ is a genus $g$ hyperbolic surface and $\alpha := (\alpha_i)_{i=1}^k$ is a simple closed multi-geodesic on $X$ of the same topological type as $\gamma$ with a choice of orientation on each of its components, modulo the equivalence relation $(X',\alpha') \sim (X'',\alpha'')$ if and only if there exists an orientation-preserving isometry $I \colon X' \to X''$ sending the components of $\alpha'$ to the components of $\alpha''$ respecting their labelings and orientations. 
\end{exercise}

Consider the identification of the quotient space $\mathcal{T}_{g,n} /\text{Stab}_0(\gamma)$ provided by Exercise \ref{ex:quot}. The cut and glue fibration of $\mathcal{T}_{g,n} /\text{Stab}_0(\gamma)$ is the map $\Psi \colon \mathcal{T}_{g,n} /\text{Stab}_0(\gamma) \to \Omega_{g,n}(\gamma)$ which to every equivalence class $(X,\alpha) \in \mathcal{T}_{g,n} /\text{Stab}_0(\gamma)$ assigns the pair $(\mathbf{L},\mathbf{X}) \in \Omega_g(\gamma)$ given by
\begin{align*}
\mathbf{L} := \smash{(\ell_{\alpha_i}(X))_{i=1}^k}, \quad
\mathbf{X} := \smash{(X(\alpha)_j)_{j=1}^c},
\end{align*}
where $X(\alpha)_j \in \mathcal{M}_{g_j,b_j}(\mathbf{L}_j)$ is the $j$-th component (according to the labeling and orientation of the components of $\alpha$) of the hyperbolic surface with geodesic boundary components obtained by cutting $X$ along the components of $\alpha$. The fiber $\Psi^{-1}(\mathbf{X},\mathbf{L})$ above any pair $(\mathbf{L}, \mathbf{X}) \in \Omega_{g}(\gamma)$ is given by all possible ways of glueing the components of $\smash{\mathbf{X}:=(X_j)_{j=1}^c}$ along their boundaries respecting the labelings. Given any point $(X,\alpha) \in \Psi^{-1}(\mathbf{X},\mathbf{L})$, the whole fiber $\Psi^{-1}(\mathbf{X},\mathbf{L})$ can be recovered by considering all possible Fenchel-Nielsen twists of $X$ along the components of $\alpha$.

Consider a point $(\mathbf{L},\mathbf{X}) \in \Omega_{g}(\gamma)$ on the base of the cut and glue fibration. Let $(X,\alpha) \in \Psi^{-1}(\mathbf{X},\mathbf{L})$ with $\alpha := (\alpha_i)_{i=1}^k$. The fiber $\Psi^{-1}(\mathbf{X},\mathbf{L})$ supports well defined $1$-forms $d\tau_{\alpha_i}$ that measure the infinitesimal Fenchel-Nielsen twists along the components of $\alpha$. Wedging these $1$-forms yields a natural volume form on $\Psi^{-1}(\mathbf{X},\mathbf{L})$. For every set of lengths  $\mathbf{L} := (\ell_i)_{i=1}^k$ and for Weil-Petersson almost every $\mathbf{X}$, the total mass of the fiber $\Psi^{-1}(\mathbf{X},\mathbf{L})$ with respect to this volume form is given by
\begin{equation}
\label{eq:d1}
| \Psi^{-1}(\mathbf{X},\mathbf{L}) | = 2^{-\rho_{g,n}(\gamma)} \cdot \ell_1 \cdots \ell_k,
\end{equation}
where $\rho_{g}(\gamma)$ is the number of components of $\gamma$ that bound (on any of its sides) a component of $S_{g}(\gamma)$ that is homeomorphic to a torus with one boundary component. We refer to pairs $(\mathbf{L},\mathbf{X}) \in \Omega_{g}(\gamma)$ such that $(\ref{eq:d1})$ holds as generic pairs. The factor $2^{-\rho_{g}(\gamma)}$ reflects the observation that every hyperbolic torus with one geodesic boundary component has a non-trivial isometric involution preserving its boundary setwise. See Figure \ref{fig:inv} for a picture of this involution.

\begin{figure}[h!]
	\centering
	\includegraphics[width=.3\textwidth]{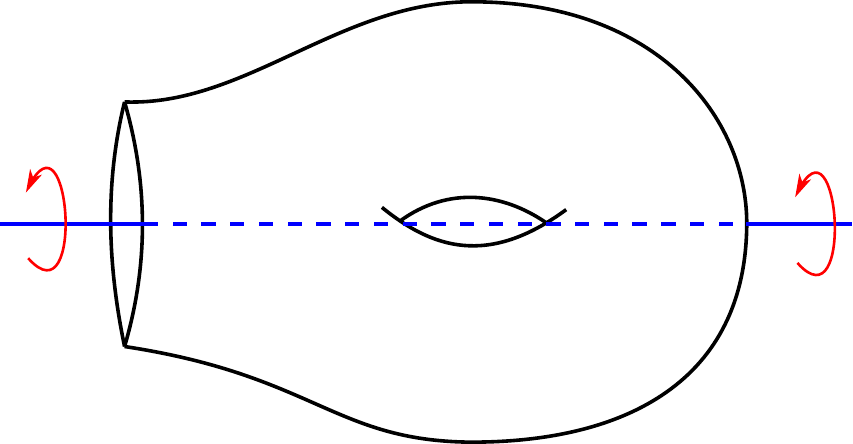}
	\caption{The isometric involution of a torus with one boundary component.} 
	\label{fig:inv} 
\end{figure}

Denote by $\mu_{\mathrm{wp}}$ the Weil-Petersson measure on $\mathcal{T}_{g}$ and by $\widetilde{\mu}_{\mathrm{wp}}$ the local pushforward of $\mu_{\mathrm{wp}}$ to $\mathcal{T}_g/\mathrm{Stab}_0(\gamma)$. It follows from Wolpert's magic formula, i.e., from (\ref{eq:wol1}) and (\ref{eq:wol2}), that $\widetilde{\mu}_{\mathrm{wp}}$ can be disintegrated along the fibers of the cut and glue fibration $\Psi \colon \mathcal{T}_{g,n} /\text{Stab}_0(\gamma) \to \Omega_{g,n}(\gamma)$ as
\[
\widetilde{\mu}_\mathrm{wp} = \sigma_g(\gamma) \cdot \prod_{i=1}^k d\tau_{\alpha_i} \otimes \prod_{i=1}^c \widehat{\mu}_{\mathrm{wp}}^j \otimes \prod_{i=1}^k d\ell_i,
\]
where $\widehat{\mu}_\text{wp}^j$ denotes the Weil-Petersson measure on $\mathcal{M}_{g_j,b_j}(\mathbf{L}_j)$ and $\sigma_g(\gamma) \colon \mathcal{T}_g/\mathrm{Stab}_0(\gamma) \to \mathbf{Q}_{>0}$ is the function with non-negative rational values that records the discrepancy between the stabilizer factors, i.e., the factors $|\Gamma|$ in (\ref{eq:stab_fact}), of the measure $\widetilde{\mu}_{\mathrm{wp}}$ and the product of measures $\prod_{i=1}^c \widehat{\mu}_{\mathrm{wp}}^j$. One can show that the function $\sigma_g(\gamma)$ is almost everywhere constant with respect to $\widetilde{\mu}_{\mathrm{wp}}$ and is given by
\[
\sigma_{g}(\gamma) := \frac{\prod_{j=1}^c |K_{g_j,b_j}|}{|\text{Stab}_0(\gamma)\cap K_{g}|},
\]
where $K_{g_j,b_j} \triangleleft \text{Mod}_{g_j,b_j}$ is the kernel of the mapping class group action on $\mathcal{T}_{g_j,b_j}$ and $K_{g} \triangleleft \text{Mod}_{g}$ is the kernel of the mapping class group action on $\mathcal{T}_{g}$. Indeed, $\widehat{\mu}_{\mathrm{wp}}^j$ almost every hyperbolic surface in $\mathcal{M}_{g_j,b_j}(\mathbf{L})$ has $|K_{g_j,b_j}|$ automorphisms and analogously for elements of $\mathcal{T}_g/\mathrm{Stab}_0(\gamma)$. For example, if $g =2$ and $\gamma$ is a separating simple closed curve on $S_{2}$, then $\sigma_{2}(\gamma) = 4/2 = 2$. 

Let us record the most important conclusions of the discussion above in the following theorem.

\begin{theorem}
	\label{theo:cut_glue_fib}
	The Weil-Petersson measure $\widetilde{\mu}_{\mathrm{wp}}$ on $\mathcal{T}_{g} /\text{Stab}_0(\gamma)$ can be disintegrated along the fibers of the cut and glue fibration $\Psi \colon \mathcal{T}_{g,n} /\text{Stab}_0(\gamma) \to \Omega_{g,n}(\gamma)$ in the following way,
	\[
	\widetilde{\mu}_\mathrm{wp} = \sigma_g(\gamma) \cdot \prod_{i=1}^k d\tau_{\alpha_i} \otimes \prod_{i=1}^c \widehat{\mu}_{\mathrm{wp}}^j \otimes \prod_{i=1}^k d\ell_i.
	\]
	Generically, the volume of the fiber $\Psi^{-1}(\mathbf{X},\mathbf{L})$ above a $(\mathbf{L},\mathbf{X}) \in \Omega_{g,n}(\gamma)$ with $\mathbf{L} := (\ell_i)_{i=1}^k$ is equal to
	\[
	| \Psi^{-1}(\mathbf{X},\mathbf{L}) | = 2^{-\rho_{g}(\gamma)} \cdot \ell_1 \cdots \ell_k.
	\]
\end{theorem}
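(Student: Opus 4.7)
The plan is to extend $\gamma$ to a full pants decomposition of $S_g$, apply Wolpert's magic formula to factor the Weil-Petersson volume form, and then pass to the quotient by $\mathrm{Stab}_0(\gamma)$ by analyzing its action in Fenchel-Nielsen coordinates. Extend $\gamma = (\gamma_1, \ldots, \gamma_k)$ to a pants decomposition $\mathcal{P} = (\gamma_1, \ldots, \gamma_{3g-3})$ in such a way that each additional curve $\gamma_{k+1}, \ldots, \gamma_{3g-3}$ lies in a single component of $S_g(\gamma)$; this induces pair of pants decompositions on each $\Sigma_j \cong S_{g_j, b_j}$. The associated Fenchel-Nielsen coordinates $(\ell_i, \tau_i)_{i=1}^{3g-3}$ on $\mathcal{T}_g$ split into the $\gamma$-coordinates $(\ell_i, \tau_i)_{i=1}^k$ and coordinates that pull back from Fenchel-Nielsen coordinates on the complementary Teichm\"uller spaces $\mathcal{T}_{g_j, b_j}(\mathbf{L}_j)$ via the natural cutting map. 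Wolpert's magic formula applied on both $\mathcal{T}_g$ and each $\mathcal{T}_{g_j, b_j}(\mathbf{L}_j)$, via (\ref{eq:wol1}) and (\ref{eq:wol2}), then yields the factorization
\begin{equation*}
v_{\mathrm{wp}} = \bigwedge_{i=1}^k \bigl(d\ell_i \wedge d\tau_i\bigr) \wedge \bigwedge_{j=1}^c v_{\mathrm{wp}}^j,
\end{equation*}
where $v_{\mathrm{wp}}^j$ is the pullback of the Weil-Petersson form on $\mathcal{T}_{g_j, b_j}(\mathbf{L}_j)$.

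Next I would describe how $\mathrm{Stab}_0(\gamma)$ acts in these coordinates. It is generated by three kinds of elements: Dehn twists $T_{\gamma_i}$ for $i \leq k$, which shift $\tau_i \mapsto \tau_i + \ell_i$ and fix every other coordinate; pure mapping classes of each $\Sigma_j$ that fix its boundary setwise with orientations, acting through the full group $\mathrm{Mod}_{g_j, b_j}$ on the $\Sigma_j$-coordinates only; and, on each torus-with-one-boundary component $\Sigma_j$, the hyperelliptic involution $\iota_j$ extended by the identity on the complement, which preserves $\partial \Sigma_j$ setwise but shifts the adjacent $\tau_i$ by half of the boundary length. Projecting $\mathcal{T}_g/\mathrm{Stab}_0(\gamma)$ onto the boundary lengths $\mathbf{L}$ and the unmarked pieces $\mathbf{X}$ recovers the cut-and-glue map $\Psi$ via the identification of Exercise \ref{ex:quot}; the fiber is parametrized by the $k$ twist coordinates modulo the identifications produced by the Dehn twists and the torus involutions.

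The disintegration formula then follows by applying the local change of variables formula (Exercise \ref{ex:local_cv}) to the factored form of $v_{\mathrm{wp}}$: quotienting the $\Sigma_j$-factors by $\mathrm{Mod}_{g_j, b_j}$ produces the measures $\widehat{\mu}_{\mathrm{wp}}^j$ on $\mathcal{M}_{g_j, b_j}(\mathbf{L}_j)$, the $d\ell_i$ descend to boundary length parameters on the base, and the $d\tau_i$ descend to twist coordinates on each fiber. The multiplicative factor $\sigma_g(\gamma)$ absorbs the discrepancy between the stabilizer normalization $|\mathrm{Stab}_0(\gamma) \cap K_g|$ entering the definition (\ref{eq:stab_fact}) of $\widetilde{\mu}_{\mathrm{wp}}$ and the product $\prod_j |K_{g_j, b_j}|$ entering the local pushforward on $\prod_j \mathcal{M}_{g_j, b_j}(\mathbf{L}_j)$; this factor is almost everywhere constant because generic points carry no additional stabilizers beyond these kernels.

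The main obstacle is the $2^{-\rho_g(\gamma)}$ in the generic fiber volume. The Dehn twists alone would reduce each $\tau_i$ to $\mathbf{R}/\ell_i \mathbf{Z}$, producing a raw fiber of volume $\ell_1 \cdots \ell_k$. For each component $\gamma_i$ bounding a torus piece $\Sigma_j$, the glued hyperelliptic involution is an element of $\mathrm{Stab}_0(\gamma)$ that further identifies $\tau_i$ with $\tau_i + \ell_i/2$, halving the effective period and contributing one factor of $1/2$. The delicate step is checking that these are the \emph{only} generic identifications, i.e.\ that for a full-measure set of $(\mathbf{X}, \mathbf{L})$ every isometry of the glued surface preserving $\alpha$ with its labeling and orientations must restrict on each non-torus piece to an element of $K_{g_j, b_j}$ and on each torus piece either to the identity or to $\iota_j$. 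This genericity reduces to the absence of extra boundary-preserving automorphisms on a full-measure subset of each $\mathcal{M}_{g_j, b_j}(\mathbf{L}_j)$ with $(g_j, b_j) \neq (1,1)$, together with the known isometry group of a hyperbolic torus with geodesic boundary.
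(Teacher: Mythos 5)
Your proposal is correct and follows essentially the same route as the paper's own discussion preceding Theorem~\ref{theo:cut_glue_fib}: extend $\gamma$ to a pants decomposition compatible with the pieces $\Sigma_j$, factor $v_{\mathrm{wp}}$ via Wolpert's magic formula (\ref{eq:wol1})--(\ref{eq:wol2}), identify the quotient $\mathcal{T}_g/\mathrm{Stab}_0(\gamma)$ with the cut-and-glue picture of Exercise~\ref{ex:quot}, and account for the factor $2^{-\rho_g(\gamma)}$ by the elliptic involution of each once-holed torus piece rotating its boundary by half its length, with $\sigma_g(\gamma)$ absorbing the kernel discrepancies as in the paper. Your explicit identification of the generic twist-shift $\tau_i \mapsto \tau_i + \ell_i/2$ (the square of the boundary-pointwise lift of $\iota_j$ being the Dehn twist $T_{\gamma_i}$) correctly unpacks what the paper phrases only as ``a non-trivial isometric involution preserving its boundary setwise.''
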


\subsection*{Mirzakhani's integration formula.} We are now ready to discuss Mirzakhani's integration formula. Let $\gamma := (\gamma_i)_{i=1}^k$ be an ordered simple closed multi-curve on $S_g$. Consider the stabilizer
\[
\mathrm{Stab}(\gamma) := \bigcap_{i=1}^k \mathrm{Stab}(\gamma_i) \subseteq \mcg.
\]
Notice that $\mathrm{Stab}_0(\gamma)$ is a finite index subgroup of $\mathrm{Stab}(\gamma)$. Let $c > 0$ and $g_j,b_j \geq 0$ with $j \in \{1, \dots, c\}$ be as above. Consider the total Weil-Petersson volumes
\[
V_{g_j,b_j}(\mathbf{x}_j) := \widehat{\mu}^j_{\mathrm{wp}}(\mathcal{M}_{g_j,b_j}(\mathbf{x}_j))
\]
of the moduli spaces $\mathcal{M}_{g_j,b_j}(\mathbf{x}_j)$ as functions of the boundary lengths $\mathbf{x}_j \in \mathbf{R}_+^{b_j}$. Recall the definitions of the constants $\rho_{g}(\gamma) \in \mathbf{N}$ and $\sigma_g(\gamma) \in \mathbf{Q}_{>0}$ introduced above. Consider the measurable function $V_g(\gamma,\cdot) \colon \mathbf{R}_+^k \to \mathbf{R}^+$ which to every vector $\mathbf{x} := (x_i)_{i=1}^k$ with positive entries assigns the value
\begin{equation}
\label{eq:V}
V_g(\gamma,\mathbf{x}) := \frac{\sigma_g(\gamma) \cdot 2^{-\rho_g(\gamma)}}{[\mathrm{Stab}(\gamma):\mathrm{Stab}_0(\gamma)]}  \cdot \prod_{j=1}^c V_{g_j,b_j}(\mathbf{x}_j),
\end{equation}
where the vector $\mathbf{x}_j \in \mathbf{R}_+^{b_j}$ is defined using the correspondence between the components of $\gamma$ and the boundary components of $S_{g_j,b_j}$. By Theorem \ref{theo:vol_pol}, $V_g(\gamma,\mathbf{x})$ is a polynomial on the $\mathbf{x}$ variables.

Let $f \colon \smash{\mathbf{R}_+^k} \to \mathbf{R}$ be a non-negative, measurable function. Given a marked hyperbolic structure $X \in \mathcal{T}_g$ and an ordered simple closed multi-curve $\alpha$ on $S_g$, denote $\smash{\vec{\ell}_\alpha}(X) := (\ell_{\alpha_i}(X))_{i=1}^k \in \mathbf{R}_+^k$. Consider the non-negative, measurable function $f_\gamma \colon \mathcal{T}_g \to \mathbf{R}$ which to every $X \in \mathcal{T}_g$ assigns the value
\[
f_\gamma(X) := \sum_{\alpha \in \mcg \cdot \gamma} f\left(\smash{\vec{\ell}_\alpha}(X)\right).
\]
This function is clearly invariant with respect to the action of $\mcg$ on $\mathcal{T}_g$. Denote by $\smash{\widehat{f}}_\gamma \colon \mathcal{M}_g \to \mathbf{R}$ the corresponding non-negative, measurable function induced on moduli space. Denote by $\widehat{\mu}_{\mathrm{wp}}$ the Weil-Petersson measure on $\mathcal{M}_g$. On $\smash{\mathbf{R}_+^k}$ consider the standard coordinate system $\mathbf{x} := \smash{(x_i)_{i=1}^k}$ and the Lebesgue class measure $\mathbf{x} \cdot d\mathbf{x} := x_1 \cdots x_k \cdot dx_1 \cdots dx_k$. The following exercise corresponds to Mirzakhani's integration formula \cite[Theorem 4.1]{Mir08b}.

\begin{exercise}
	\label{ex:mir_int}
	Using Exercise \ref{ex:local_cv} and Theorem \ref{theo:cut_glue_fib} show that the following integration formula holds,
	\[
	\int_{\mathcal{M}_g} \widehat{f}_\gamma(X) \thinspace d\widehat{\mu}_{\mathrm{wp}}(X) = \int_{\mathbf{R}_+^k} f(\mathbf{x}) \cdot V_g(\gamma,\mathbf{x}) \cdot \mathbf{x} \cdot d\mathbf{x}.
	\]
\end{exercise}

\subsection*{McShane's identity.} We now sketch a proof of Theorem \ref{theo:vol_pol} for the case of tori with one boundary component, i.e., for the case $g=1$ and $b=1$. For the rest of this discussion we fix a torus with one boundary component $S_{1,1}$ and a parameter $L > 0$. To compute the total Weil-Petersson volume $V_{1,1}(L) := \widehat{\mu}_\mathrm{wp}(\mathcal{M}_{1,1}(L))$ one could recall $\mathcal{M}_{1,1}(L) := \mathcal{T}_{1,1}(L)/ \mathrm{Mod}_{1,1}$ and try to find a fundamental domain for the action of $\mathrm{Mod}_{1,1}$ on $\mathcal{T}_{1,1}(L)$ that can be described explicitely in Fenchel-Nielsen coordinates. This happens to be quite a formidable task. In abscence of such a fundamental domain we consider a different approach that relies on Mirzakhani's integration formula.

For the rest of this discussion we fix a simple closed curve $\gamma$ on $S_{1,1}$.  Let $f \colon \mathbf{R}^+ \to \mathbf{R}$ be a non-negative measurable function. Just as in the case of closed surfaces, consider the non-negative, measurable transform $\smash{\widehat{f}_\gamma} \colon \mathcal{M}_{1,1}(L) \to \mathbf{R}$ which to every $X \in \mathcal{M}_{1,1}(L)$ assigns the value
\[
\smash{\widehat{f}_\gamma}(X) = \sum_{\alpha \in \mathrm{Mod}_{1,1} \cdot \gamma} f(\ell_{\gamma}(X)).
\]
An analogue of Mirzakhani's integration formula in Exercise \ref{ex:mir_int} also holds in this setting. Our immediate goal is to find a non-negative, measurable function $f \colon \mathbf{R}^+ \to \mathbf{R}$ for which the transform $\smash{\widehat{f}_\gamma} \colon \mathcal{M}_{1,1}(L) \to \mathbf{R}$ is equal to a constant $c(L) > 0$. Indeed, for such a function we would have
\begin{equation}
\label{eq:int}
c(L) \cdot \widehat{\mu}_\mathrm{wp}(\mathcal{M}_{1,1}(L)) = \int_{\mathcal{M}_{1,1}(L)} \widehat{f}_\gamma(X) \thinspace d\widehat{\mu}_\mathrm{wp}(X) = \frac{1}{2} \cdot \int_{\mathbf{R}^+}f(x) \cdot x \cdot dx.
\end{equation}
Rearranging the terms in this equation would yield
\begin{equation}
\label{eq:trick}
V_{1,1}(L) := \widehat{\mu}_\mathrm{wp}(\mathcal{M}_{1,1}(L)) = \frac{1}{2 \cdot c(L)} \cdot \int_{\mathbf{R}^+}f(x) \cdot x \cdot dx.
\end{equation}

Finding a non-negative, measurable function $f \colon \mathbf{R}^+ \to \mathbf{R}$ for which $\smash{\widehat{f}_\gamma} \colon \mathcal{M}_{1,1}(L) \to \mathbf{R}$ is constant is the content of McShane's identity. Consider the function $D \colon \mathbf{R}^3 \to \mathbf{R}$ given by
\[
D(x,y,z) := 2 \log \left(\frac{e^{\frac{x}{2}} + e^{\frac{y+z}{2}}}{e^{-\frac{x}{2}} + e^{\frac{y+z}{2}}} \right).
\]

\begin{theorem}
	\cite{Mc91}
	\label{theo:mc_shane}
	Let $\gamma$ a simple closed curve on $S_{1,1}$ and $L > 0$. Then, for every $X \in \mathcal{M}_{1,1}(L)$,
	\begin{equation}
	\label{eq:mc}
	\sum_{\alpha \in \mathrm{Mod}_{1,1} \cdot \gamma} D(L,\ell_{\alpha}(X),\ell_{\alpha}(X)) = L.
	\end{equation}
\end{theorem}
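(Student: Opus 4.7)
The plan is to realize McShane's identity as a length decomposition of the boundary geodesic $\beta \subseteq X$ of length $L$. Specifically, I will exhibit a partition of $\beta$, up to a set of one-dimensional Lebesgue measure zero, into measurable subsets $E_\alpha \subseteq \beta$ indexed by $\alpha \in \mathrm{Mod}_{1,1} \cdot \gamma$, and show that $\mathrm{length}(E_\alpha) = D(L, \ell_{\alpha}(X), \ell_{\alpha}(X))$. Summing then yields $L = \mathrm{length}(\beta) = \sum_{\alpha} D(L, \ell_{\alpha}(X), \ell_{\alpha}(X))$, as desired.

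First, for each $p \in \beta$, let $\gamma_p$ denote the unit-speed geodesic ray leaving $p$ orthogonally into the interior of $X$. Classify $p \in \beta$ according to the behavior of $\gamma_p$: either $\gamma_p$ is an injective (simple) infinite ray, or $\gamma_p$ self-intersects after finite time. By the Birman--Series theorem on the sparseness of simple geodesics, the subset $B \subseteq \beta$ of points of the first kind has one-dimensional Lebesgue measure zero.

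Second, for each $p \in \beta \setminus B$, analyze the topological configuration traced by $\gamma_p$ up to its first self-intersection. A topological argument particular to $S_{1,1}$ (using $\chi(S_{1,1}) = -1$ and the classification of essential simple closed curves of $S_{1,1}$) shows that this configuration canonically determines a non-peripheral simple closed geodesic $\alpha(p)$ on $X$ bounding an embedded pair of pants in $X$ that contains the initial segment of $\gamma_p$. By transitivity of $\mathrm{Mod}_{1,1}$ on isotopy classes of non-peripheral simple closed curves of $S_{1,1}$, $\alpha(p)$ ranges over $\mathrm{Mod}_{1,1} \cdot \gamma$. Setting $E_\alpha := \{ p \in \beta \setminus B : \alpha(p) = \alpha \}$ produces the desired partition.

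Third, for each $\alpha \in \mathrm{Mod}_{1,1} \cdot \gamma$, the set $E_\alpha$ consists of finitely many subarcs of $\beta$ whose geometry is entirely captured by the embedded pair of pants $P_\alpha \subseteq X$ obtained by cutting $X$ along $\alpha$. By Exercise \ref{ex:rigid_hex}, $P_\alpha$ is rigid, determined up to isometry by its boundary lengths $(L, \ell_{\alpha}(X), \ell_{\alpha}(X))$. Decomposing $P_\alpha$ into two isometric right-angled hyperbolic hexagons along its three orthogeodesics and applying the standard hexagon side-length identities locates the endpoints of the arcs composing $E_\alpha$ and yields the explicit closed form
\begin{equation*}
\mathrm{length}(E_\alpha) = D(L, \ell_{\alpha}(X), \ell_{\alpha}(X)).
\end{equation*}
The main obstacle is precisely this final hyperbolic trigonometric identification: matching the arc length determined by the hexagon geometry to the somewhat intricate explicit function $D$ requires a careful computation using the hexagon identities, and is the computational heart of the proof. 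The topological classification and the Birman--Series input, while delicate, are classical inputs that support the framework.
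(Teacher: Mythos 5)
Your overall framework matches the paper's sketch exactly: partition the boundary $\beta$ by the behavior of orthogonal geodesic rays, invoke Birman--Series to discard a measure-zero exceptional set, associate a simple closed geodesic $\alpha(p)$ to each generic $p$, and compute the arc lengths from the rigidity of the embedded pair of pants. However, there is a genuine gap in your case analysis. You classify points $p \in \beta$ into exactly two types -- $\gamma_p$ is a simple infinite ray, or $\gamma_p$ self-intersects in finite time -- and treat this as an exhaustive dichotomy. It is not. There is a third possibility, and it is not a measure-zero corner case: $\gamma_p$ may return to the boundary $\beta$ (i.e., exit the surface) \emph{before} it self-intersects. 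The paper explicitly names all three behaviors and highlights this third one as one of the two substantive cases (compare the two subfigures of Figure~\ref{fig:mc}, labeled ``Geodesic intersects boundary first'' and ``Geodesic intersects itself first''). Your second step, where you ``analyze the topological configuration traced by $\gamma_p$ up to its first self-intersection,'' simply does not apply to points of this third type, since they never self-intersect.

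The gap is repairable, because the same mechanism is used to handle the omitted case: whichever happens first (self-intersection or boundary return), the initial arc of $\gamma_p$ together with a piece of $\beta$ determines a simple closed curve $\alpha(p)$, and the union of the two kinds of arcs associated to a given $\alpha$ is what contributes the length $D(L,\ell_\alpha(X),\ell_\alpha(X))$ -- the arcs of both kinds are needed to recover the precise value of $D$, since the function $D$ encodes both the gap where a geodesic self-intersects before leaving the pants $P_\alpha$ and the gap where it reaches $\beta$ first. So you should restate the classification as a trichotomy and define $\alpha(p)$ via the configuration up to the first intersection event (self-intersection \emph{or} return to $\beta$), rather than up to first self-intersection alone. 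With that correction the proposal reproduces the paper's argument.
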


Let us give a quick rundown of the main ideas of the proof of Theorem \ref{theo:mc_shane}. For this we interpret the right hand side of (\ref{eq:mc}) as the length of the boundary component of $X \in \mathcal{M}_{1,1}(L)$. For every point on this boundary component we shoot a geodesic into $X$ in the direction orthogonal to the boundary. Three things can happen at this stage. Either the geodesic remains in $X$ at all times without intersecting itself, the geodesic exits $X$ without intersecting itself, or the geodesic intersects itself. By work of Birman and Series \cite{BS85}, the first case only happens for a measure zero subset of points on the boundary. In each of the two other cases we consider the simple closed curves $\alpha$ described in Figure \ref{fig:mc}. Cutting $X$ along the corresponding geodesic representatives yields a hyperbolic pair of pants with boundary lengths $(L,\ell_\alpha(X),\ell_{\alpha}(X))$. Using the rigidity of such pairs of pants one can show that for points in exactly two arcs of the original boundary component whose lengths add up to $D((L,\ell_\alpha(X),\ell_{\alpha}(X))$, the corresponding orthogonal geodesics either intersect themselves or the original boundary component before exiting the pants. Putting these ideas together finishes the proof.

\begin{figure}[h]
	\centering
	\begin{subfigure}[b]{0.4\textwidth}
		\centering
		\includegraphics[width=.6\textwidth]{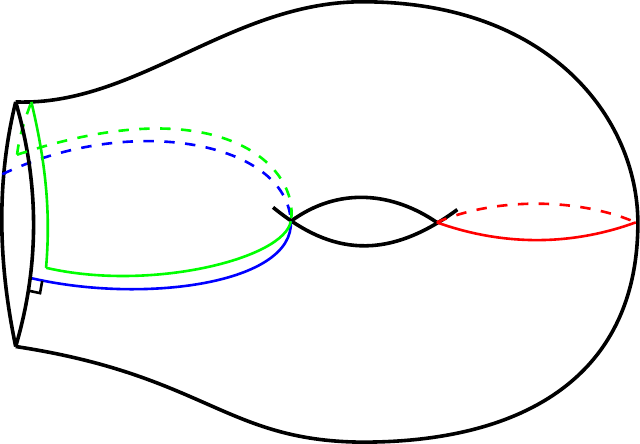}
		\caption{Geodesic intersects boundary first.}
	\end{subfigure}
	\quad \quad \quad
	~ 
	\begin{subfigure}[b]{0.4\textwidth}
		\centering
		\includegraphics[width=.6\textwidth]{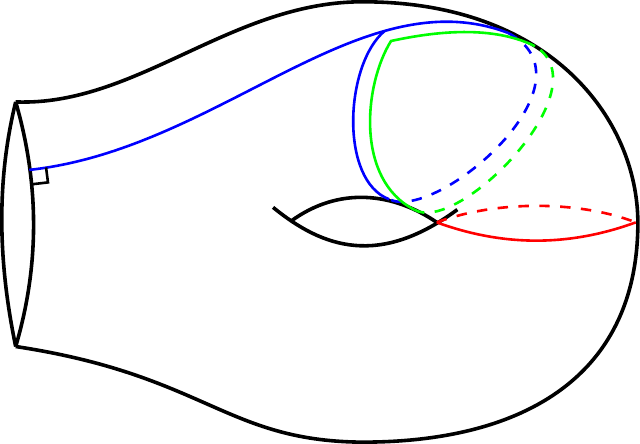}
		\caption{Geodesic intersects itself first.}
	\end{subfigure}
	\caption{The geodesic shot orthogonal to the boundary (in blue) determines a simple closed curve (in green) which tightens to a simple closed geodesic (in red).} 
	\label{fig:mc}
\end{figure}

\begin{exercise}
	Using (\ref{eq:trick}) and Theorem \ref{theo:mc_shane} show that
	\begin{equation}
	\label{eq:f1}
	V_{1,1}(L) = \frac{1}{48} \cdot L^2 + \frac{\pi^2}{12}.
	\end{equation}
	\textit{Hint: To simplify the computations, differentiate (\ref{eq:int}) with respect to $L$ instead of using (\ref{eq:trick}) directly.}
\end{exercise}

In her thesis \cite{Mir04}, Mirzakhani generalized Theorem \ref{theo:mc_shane} to arbitrary closed, orientable surfaces \cite[Theorem 1.3]{Mir07a}. Using this generalization, Mirzakhani proved a recursive formula for the total Weil-Petersson volumes $V_{g,b}(\mathbf{L})$  \cite[\S 5]{Mir07a}. Using this formula she ultimately deduced Theorem \ref{theo:vol_pol}. An alternative proof of Theorem \ref{theo:vol_pol} using symplectic reduction can also be found in Mirzakhani's thesis \cite[Theorem 1.1]{Mir07c}. An excellent reference for all these topics is Do's survey \cite{Do13}.

In subsequent discussions we will use the following explicit volume polynomial,
\begin{equation}
\label{eq:f2}
V_{1,2}(L_1,L_2) = \frac{1}{192} \cdot L_1^4 + \frac{1}{96} \cdot L_1^2  L_2^2 + \frac{1}{192} \cdot L_2^4 + \frac{\pi^2}{12} \cdot L_1^2 + \frac{\pi^2}{12} \cdot L_2^2 + \frac{\pi^2}{4}.
\end{equation}

\begin{exercise}
	\label{ex:VV}
	Recall the definition of the polynomials $V_g(\gamma,\mathbf{x})$ in (\ref{eq:V}). Let $\gamma_1$ and $\gamma_2$ be non-separating and separating simple closed curves on $S_2$, respectively. Using (\ref{eq:f1}) and (\ref{eq:f2}) show that
	\begin{align*}
	V_2(\gamma_1,x) = \frac{1}{96} \cdot x^4, \quad
	V_2(\gamma_2,x) = \frac{1}{4608} \cdot x^4.
	\end{align*}
\end{exercise}

\section{Counting simple closed geodesics on hyperbolic surfaces}

\subsection*{Outline of this section.} In this section we give a complete proof of Theorem \ref{theo:main}, the main result of this survey. We follow the same approach as in the proof of Theorem \ref{theo:main_lec_1} but use the vocabulary and tools introduced in \S 3 -- 4. We encourage the reader to keep in mind the analogies described in Table \ref{tb:1} for the rest of this section.

\subsection*{Counting simple closed geodesics.} Recall that Theorem \ref{theo:main}, the main result of this survey, asserts that the number $s(X,L)$ of unoriented simple closed geodesics of length $\leq L$ on an arbitrary closed, orientable hyperbolic surface $X$ of genus $g \geq 2$ is asymptotic as $L \to \infty$ to a polynomial of degree $6g-6$ on $L$. To prove this result we follow the same approach considered in the proof of Theorem \ref{theo:main_lec_1_red}. We begin by rewriting the counting function of interest $s(X,L)$ using the vocabulary introduced in previous sections. 

For the rest of this section we fix an integer $g \geq 2$ and a connected, oriented, closed surface $S_g$ of genus $g$. Recall that a simple closed curve on $S_g$ is an equivalence class of homotopically non-trivial parametrized simple closed curves on $S_g$ up to free homotopy and orientation reversal. Recall that $\mcg$ denotes the mapping class group of $S_g$ and that this group acts naturally on the set of simple closed curves on $S_g$. Recall that orbits of this action are in one-to-one correspondence with topological types of simple closed curves on $S_g$. The following exercise will be crucial for our approach.

\begin{exercise}
	\label{ex:finite_top_type}
	Show there are only finitely many topological types of simple closed curves on $S_g$. Can you give an exact formula for the number of such equivalence classes?
\end{exercise}

Recall that $\mathcal{T}_g$ denotes the Teichmüller space of marked hyperbolic structures on $S_g$. Recall that if $\gamma$ is a simple closed curve on $S_g$ and $X \in \mathcal{T}_g$ then $\ell_{\gamma}(X) > 0 $ denotes the length of the unique geodesic representative of $\gamma$ with respect to $X$. Let $\gamma$ be a simple closed curve on $S_g$ and $X \in \mathcal{T}_g$ be a marked hyperbolic structure on $S_g$. For every $L > 0$ consider the counting function
\[
s(X,\gamma,L) := \# \{\alpha \in \mcg \cdot \gamma \ | \ \ell_{\alpha}(X) \leq L \}.
\]
As every free homotopy class of simple closed curves on $S_g$ has a unique geodesic representative with respect to $X$, this is exactly the number of unoriented simple closed geodesics on $X$ of the same topological type as $\gamma$ and length $\leq L$. Hence, we can decompose the counting function $s(X,L)$ as
\begin{equation}
\label{eq:sum}
s(X,L) = \sum_{\gamma} s(X,\gamma,L),
\end{equation}
where $\gamma$ runs over all the finitely many different topological types of simple closed curves on $S_g$. See Exercise \ref{ex:finite_top_type}. Thus, it is enough for our purposes to study the asymptotics of $s(X,\gamma,L)$ as $L \to \infty$.

Motivated by this observation we generalize the definition of the counting function $s(X,\gamma,L)$ to general integral simple closed multi-curves on $S_g$. Recall that if $\alpha := \smash{\sum_{i=1}^k a_i \alpha_i}$ is a simple closed multi-curve on $S_g$ and $X \in \mathcal{T}_g$ then $\ell_{\alpha}(X) := \sum_{i=1} a_i \ell_{\alpha_i}(X) > 0$. Let $\gamma := \smash{\sum_{i=1}^k a_i \gamma_i}$ be an integral simple closed multi-curve on $S_g$ and $X \in \mathcal{T}_g$. For every $L > 0$ consider the counting function
\[
s(X,\gamma,L) := \#\{\alpha \in \mcg \cdot \gamma \ | \ \ell_{\alpha}(X) \leq L \}.
\]

Recall that $\mathcal{M}_g := \mathcal{T}_g/\mcg$ denotes the moduli space of hyperbolic structures on $S_g$. Notice that the counting function $s(X,\gamma,L)$ does not depend on the marking of $X \in \mathcal{T}_g$ but only on its underlying hyperbolic structure $X \in \mathcal{M}_g$. We aim to prove the following asymptotic estimate for the counting function $s(X,\gamma,L)$. Theorem \ref{theo:main} will later be deduced as a direct consequence of this estimate. 

\begin{theorem}
	\label{theo:main_5}
	Let $\gamma := \smash{\sum_{i=1}^k a_i \gamma_i}$ be an integral simple closed multi-curve on $S_g$ and $X \in \mathcal{M}_g$. Then, there exists a constant $n(X,\gamma) > 0$ depending only on the topological type of $\gamma$ and the geometry of $X$ such that the following asymptotic estimate holds as $L \to \infty$,
	\[
	s(X,\gamma,L) \sim n(X,\gamma) \cdot L^{6g-6}.
	\]
\end{theorem}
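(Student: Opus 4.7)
The plan is to mimic the strategy used to prove Theorem \ref{theo:main_lec_1_red}, following the analogies in Table \ref{tb:1}. For every $L > 0$, introduce the rescaled counting measure on $\ml$ given by
\[
\mu_{L,\gamma} := \frac{1}{L^{6g-6}} \sum_{\alpha \in \mcg \cdot \gamma} \delta_{\alpha/L}.
\]
Using the homogeneity of the length function, $\ell_\alpha(X) \leq L$ if and only if $\alpha/L \in B_X := \{\lambda \in \ml \colon \ell_\lambda(X) \leq 1\}$, and therefore
\[
\frac{s(X,\gamma,L)}{L^{6g-6}} = \mu_{L,\gamma}(B_X).
\]
The main task is to establish the weak-$\star$ convergence $\mu_{L,\gamma} \to c(\gamma) \cdot \mu_{\mathrm{Thu}}$ as $L \to \infty$ for some positive constant $c(\gamma)$ depending only on the topological type of $\gamma$. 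Since $\partial B_X$ is $\mu_{\mathrm{Thu}}$-null by Exercise \ref{eq:boundary} applied to the continuous, $1$-homogeneous function $\lambda \mapsto \ell_\lambda(X)$, Portmanteau's theorem would then yield Theorem \ref{theo:main_5} with
\[
n(X,\gamma) := c(\gamma) \cdot \mu_{\mathrm{Thu}}(B_X),
\]
a quantity depending only on the topological type of $\gamma$ and on the geometry of $X$.

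The identification of weak-$\star$ limit points of $(\mu_{L,\gamma})_{L>0}$ follows the template of Proposition \ref{prop:prim_meas_1}. Any such limit point is $\mcg$-invariant because $\mcg \cdot \gamma$ is, and by a classical theorem of Masur the Thurston measure is $\mcg$-ergodic. Combined with Exercise \ref{ex:ergodic}, it is thus enough to show that every limit point is absolutely continuous with respect to $\mu_{\mathrm{Thu}}$. For this I would cover an arbitrary $\mu_{\mathrm{Thu}}$-null set by open Dehn-Thurston coordinate boxes of arbitrarily small total Thurston measure and prove a uniform upper bound $\mu_{L,\gamma}(U) \leq C \cdot \mu_{\mathrm{Thu}}(U)$, independent of $L$, for such boxes $U$ — the direct analog of inequality (2.7) — then conclude by countable subadditivity exactly as in Proposition \ref{prop:prim_meas_1}.

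To pin down $c(\gamma)$ and show it is strictly positive and independent of the subsequence, I would run an averaging argument over $\mathcal{M}_g$. Mirzakhani's integration formula (Exercise \ref{ex:mir_int}) applied to $f(\mathbf{x}) := \mathbf{1}_{\sum a_i x_i \leq L}$ gives
\[
\int_{\mathcal{M}_g} s(X,\gamma,L) \, d\widehat{\mu}_{\mathrm{wp}}(X) = \int_{\{\mathbf{x} \in \mathbf{R}_+^k \,:\, \sum a_i x_i \leq L\}} V_g(\gamma, \mathbf{x}) \cdot x_1 \cdots x_k \, dx_1 \cdots dx_k.
\]
By Theorem \ref{theo:vol_pol}, $V_g(\gamma, \mathbf{x})$ is a polynomial of total degree $6g-6-2k$ in $\mathbf{x}$ with strictly positive leading coefficient, so the substitution $x_i = L \cdot y_i$ shows the right-hand side equals $L^{6g-6} \cdot Q(\gamma) + O(L^{6g-7})$ for an explicit constant $Q(\gamma) > 0$. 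On the other hand, if $\mu_{L_n,\gamma} \to c \cdot \mu_{\mathrm{Thu}}$ along a subsequence, the dominated convergence theorem together with Portmanteau's theorem forces
\[
\frac{1}{L_n^{6g-6}} \int_{\mathcal{M}_g} s(X,\gamma,L_n) \, d\widehat{\mu}_{\mathrm{wp}}(X) \longrightarrow c \cdot \int_{\mathcal{M}_g} \mu_{\mathrm{Thu}}(B_X) \, d\widehat{\mu}_{\mathrm{wp}}(X)
\]
as $n \to \infty$. Matching the two expressions uniquely determines $c = c(\gamma) > 0$, and by the convergence criterion of Exercise \ref{ex:conv} this yields the desired weak-$\star$ convergence.

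The hardest step is providing the integrable majorant needed to apply the dominated convergence theorem in the averaging argument: one needs a function $u \colon \mathcal{M}_g \to \mathbf{R}_+$ with $\int u \, d\widehat{\mu}_{\mathrm{wp}} < \infty$ and $s(X,\gamma,L)/L^{6g-6} \leq u(X)$ uniformly in $L$, which is the direct analog of Exercise \ref{eq:dct}. Unlike the lattice setting, simple multi-curve counts blow up as $X$ degenerates in moduli space, so producing such a majorant is genuinely delicate and requires geometric control via the collar lemma, Bers's theorem, and estimates bounding $s(X,\gamma,L)$ in terms of the reciprocals of the lengths of a short pants decomposition of $X$. The uniform box estimate needed for the absolute continuity step is a closely related but more localized geometric input. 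Once these two geometric ingredients are in place, the argument assembles in formal parallel with the proof of Theorem \ref{theo:main_lec_1_red}.
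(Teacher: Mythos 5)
Your proposal is correct and follows essentially the same route as the paper's: rescale the $\mcg\cdot\gamma$ counting measures on $\ml$, show via $\mcg$-invariance, Masur's ergodicity of $\mu_{\mathrm{Thu}}$, and an absolute continuity argument in Dehn–Thurston boxes that every weak-$\star$ limit is a multiple of $\mu_{\mathrm{Thu}}$, then pin down the multiplier by averaging over $\mathcal{M}_g$ via Mirzakhani's integration formula and dominated convergence, with the crucial integrable majorant supplied by the collar lemma and Bers's theorem. The only cosmetic difference is a normalization: your constant $c(\gamma)$ absorbs the factor $1/b_g$ that the paper keeps separate, so your $n(X,\gamma) = c(\gamma)\cdot\mu_{\mathrm{Thu}}(B_X)$ agrees with the paper's $c(\gamma)B(X)/b_g$.
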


\subsection*{Counting measures on the space of measured geodesic laminations.} Inspired by the case of lattices, to prove Theorem \ref{theo:main_5} we introduce appropriate families of counting measures. Recall that $\ml$ denotes the space of measured geodesic laminations on $S_g$, that is, the natural $6g-6$ dimensional completion of the set of simple closed multi-curves on $S_g$. Let $\gamma := \smash{\sum_{i=1}^k a_i \gamma_i}$ be an integral simple closed multi-curve on $S_g$. For every $L > 0$ consider the counting measure on $\mathcal{ML}_g$ given by
\[
\mu^\gamma_L := \frac{1}{L^{6g-6}} \cdot \sum_{\alpha \in \mcg \cdot \gamma} \delta_{\frac{1}{L} \cdot \alpha}.
\]
Recall that the length $\ell_\lambda(X) > 0$ of a measured geodesic lamination $\lambda \in \ml$ with respect to a marked hyperbolic structure $X \in \mathcal{T}_g$ can be defined in a unique continuous way extending the definition on simple closed multi-curves. Let $X \in \mathcal{T}_g$ be a marked hyperbolic structure on $S_g$. Consider the subset
\[
B_X : = \{\lambda \in \ml \ | \ \ell_\lambda(X) \leq 1\}.
\]
A direct computation shows that for every $L > 0$,
\begin{equation}
\label{eq:connection}
\mu^\gamma_L\left(B_X\right) = \frac{s(X,\gamma,L)}{L^{6g-6}}.
\end{equation}
This reduces the original problem of proving an asymptotic estimate for the counting function $s(X,\gamma,L)$ to the problem of understanding the behavior as $L \to \infty$ of the sequence of counting measures $(\smash{\mu^\gamma_L})_{L > 0}$.

\subsection*{Ergodicity of the mapping class group action.} To study the asymptotic behavior of the sequence of counting measures $(\smash{\mu^\gamma_L})_{L > 0}$ we use the dynamics of the action of the mapping class group on $\ml$. Recall the definition of the Thurston measure $\mu_{\mathrm{Thu}}$ on $\ml$ and its relation to Dehn-Thurston coordinates as described in Exercise \ref{ex:thu_1}. The following result of Masur is an analogue of the fact that the Lebesgue measure is ergodic with respect to the action of $\mathrm{SL}(2,\mathbf{Z})$ on $\mathbf{R}^2$. See Theorem \ref{theo:sl_ergodic}.

\begin{theorem}
	\cite{Mas85}
	\label{theo:thu_erg}
	The measure $\mu_\mathrm{Thu}$ is ergodic with respect to the action of $\mcg$ on $\ml$.
\end{theorem}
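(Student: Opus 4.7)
The plan is to deduce ergodicity of the $\mcg$-action on $(\ml, \mu_{\mathrm{Thu}})$ from Masur's theorem that the Teichmüller geodesic flow on the moduli space of unit-area quadratic differentials $\qum$ is ergodic with respect to the Masur-Veech measure $\mu_{\mathrm{MV}}$. The bridge between the two statements is the Hubbard-Masur theorem: on any fixed Riemann surface every measured foliation is realized uniquely as the horizontal foliation of a holomorphic quadratic differential. Applying this simultaneously to horizontal and vertical foliations yields a $\mcg$-equivariant homeomorphism between the open, full-measure subset of $\ml \times \ml$ consisting of jointly filling pairs and the space $\qt$ of quadratic differentials over Teichmüller space, sending a differential $q$ to the pair $(\lambda^+(q), \lambda^-(q))$ of its horizontal and vertical foliations.

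The next step is to match the measures. Under the Hubbard-Masur identification I would verify that the product Thurston measure $\mu_{\mathrm{Thu}} \otimes \mu_{\mathrm{Thu}}$ agrees, up to a universal constant, with the natural symplectic volume form on $\qt$. After quotienting by $\mcg$ and slicing by the intersection pairing $i(\lambda^+, \lambda^-)$, which is precisely the area function preserved by the Teichmüller geodesic flow $g_t$ (the flow scales $\lambda^+$ by $e^t$ and $\lambda^-$ by $e^{-t}$), the disintegration theorem should express the resulting measure on $\qum \times \mathbf{R}$ as a product of $\mu_{\mathrm{MV}}$ with $dt$. Homogeneity of $\mu_{\mathrm{Thu}}$ of degree $6g-6$ together with Fubini is what makes this disintegration explicit.

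Once the measures are identified, the final step is a Hopf-style argument. Suppose for contradiction that $A \subseteq \ml$ is a Borel $\mcg$-invariant set with $\mu_{\mathrm{Thu}}(A) > 0$ and $\mu_{\mathrm{Thu}}(\ml \setminus A) > 0$. Saturating in the second factor, the sets $(A \times \ml) \cap \qt$ and $((\ml \setminus A) \times \ml) \cap \qt$ are $\mcg$-invariant and $g_t$-invariant (since $g_t$ only rescales the two foliations), so they descend to disjoint $g_t$-invariant subsets of $\qum$; by the Fubini step both have positive $\mu_{\mathrm{MV}}$-measure, contradicting Masur's ergodicity theorem.

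The main obstacle is the middle step: carefully verifying the compatibility between $\mu_{\mathrm{Thu}} \otimes \mu_{\mathrm{Thu}}$, the symplectic volume on $\qt$, and the Masur-Veech measure on $\qum$, including handling the measure-zero set of non-binding pairs where the Hubbard-Masur map degenerates. The deepest analytic input, ergodicity of the Teichmüller flow itself, is a black box; what must be gotten right is the geometric bookkeeping among three a priori different measures and the transition from flow-invariance to mapping-class-group-invariance.
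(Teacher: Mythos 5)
The paper does not actually prove this theorem; it cites Masur \cite{Mas85} as a black box, so there is no internal proof to compare against. That said, your high-level route---Hubbard--Masur to identify binding pairs $(\lambda^+,\lambda^-)$ with quadratic differentials, matching $\mu_{\mathrm{Thu}}\otimes\mu_{\mathrm{Thu}}$ with the Masur--Veech measure, and then invoking ergodicity of the Teichm\"uller flow---is indeed the classical strategy behind Masur's theorem, and the measure-compatibility step you flag as delicate is a known fact.

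There is, however, a genuine gap in your final step. You assert that $(A\times\ml)\cap\qt$ is $g_t$-invariant ``since $g_t$ only rescales the two foliations,'' but rescaling the transverse measure \emph{changes} the point of $\ml$: the flow $g_t$ fixes $\lambda^+$ only as an element of $\pml$, not of $\ml$. Concretely $g_t\bigl((A\times\ml)\cap\qt\bigr)=\bigl((e^tA)\times\ml\bigr)\cap\qt$, which equals $(A\times\ml)\cap\qt$ only when $A$ is scale-invariant---and an arbitrary $\mcg$-invariant set has no reason to be. As written, your argument proves ergodicity of $\mcg$ on $\pml$, not on $\ml$: you retain no control over $A$ in the radial/area direction. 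The paper's own discussion of the lattice model (Theorem~\ref{theo:sl_ergodic}) is the tell: ergodicity of $\mathrm{SL}(2,\mathbf{Z})$ on $(\mathbf{R}^2,\mathrm{Leb})$ is deduced from ergodicity of the \emph{horocycle} flow, because $\mathbf{R}^2\setminus\{0\}\cong\mathrm{SL}(2,\mathbf{R})/N$ with $N$ unipotent, whereas the geodesic flow alone only sees $\mathbf{P}^1$. To repair your argument you need an input that couples the projective statement to the scaling direction---a unipotent/horocyclic flow on $\qum$, or a substitute disintegration-along-rays argument---rather than $g_t$-ergodicity alone.
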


In analogy with the case of lattices, the ergodicity of the mapping class group action on $\ml$ can be used to study the weak-$\star$ limit points of the sequence of counting measures $(\smash{\mu_L^\gamma})_{L>0}$. The following crucial exercise is an analogue of Proposition \ref{prop:prim_meas_1}.

\begin{exercise}
	\label{ex:prim_meas_2}
	Let $\gamma := \sum_{i=1}^k a_i \gamma_i$ be an integral simple closed multi-curve on $S_g$. Show that every weak-$\star$ limit point $\mu^\gamma$ of the sequence of counting measures $(\smash{\mu_L^\gamma})_{L>0}$ is of the form $\mu^\gamma = c \cdot \mu_{\mathrm{Thu}}$ for some constant $c\geq 0$. \textit{Hint: Follow the same approach as in the proof of Proposition \ref{prop:prim_meas_1}. To prove $\mu^\gamma$ is absolutely continuous with respect to $\mu_{\mathrm{Thu}}$ use Dehn-Thurston coordinates and Exercise \ref{ex:thu_1}.}
\end{exercise}

\subsection*{Integration and integrability.} Our next goal is to show that the constant $c \geq 0$ in the conclusion of Exercise \ref{ex:prim_meas_2} is positive and independent of the limit point $\mu^\gamma$. Similar to the case of lattices, we achieve this goal by averaging over moduli space and using Mirzakhani's integration formula.

Recall that $\widehat{\mu}_{\mathrm{wp}}$ denotes the Weil-Petersson measure on $\mathcal{M}_g$. Let $\gamma := (\gamma_i)_{i=1}^k$ be an ordered simple closed multi-curve on $S_g$ and $\mathbf{a} := \smash{(a_i)_{i=1}^k} \in \mathbf{R}_+^k$ be a vector of positive weights. On $S_g$ consider the simple closed multi-curve  given by $\mathbf{a} \cdot \gamma := \smash{\sum_{i=1}^k a_i \gamma_i}$. Recall the definition of the polynomial $V_g(\gamma,\mathbf{x})$ introduced in (\ref{eq:V}). Notice that $\mathrm{Stab}(\gamma) \subseteq \mcg$ is a finite index subgroup of $\mathrm{Stab}(\mathbf{a} \cdot \gamma)$. Define
\[
V_g(\mathbf{a} \cdot \gamma, \mathbf{x}) := [\mathrm{Stab}(\mathbf{a} \cdot \gamma):\mathrm{Stab}(\gamma)]^{-1} \cdot V_g(\gamma,\mathbf{x}).
\]
On $\smash{\mathbf{R}_+^k}$ consider the standard coordinate system $\mathbf{x} := \smash{(x_i)_{i=1}^k}$ and the Lebesgue class measure $\mathbf{x} \cdot d\mathbf{x} := x_1 \cdots x_k \cdot dx_1 \cdots dx_k$. The following exercise is an analogue of Proposition \ref{prop:siegel_basic} and Exercise \ref{ex:long}.

\begin{exercise}
	\label{ex:freq}
	Let $\gamma :=(\gamma_i)_{i=1}^k$ be an ordered simple closed multi-curve on $S_g$ and $\mathbf{a}:= (a_i)_{i=1}^k \in \mathbf{R}_+^k$ be a vector of positive weights on the components of $\gamma$. Using Exercise $\ref{ex:mir_int}$ show that for every $L > 0$,
	\[
	\int_{\mathcal{M}_g} s(X,\mathbf{a} \cdot \gamma,L) \thinspace d\widehat{\mu}_\mathrm{wp}(X) = \int_{\mathbf{a} \cdot \mathbf{x} \leq L} V_g(\mathbf{a} \cdot \gamma,\mathbf{x}) \cdot \mathbf{x} \cdot d\mathbf{x}.
	\]
	Using this formula and Theorem \ref{theo:vol_pol} deduce that the function
	\begin{equation}
	\label{eq:P}
	P(\mathbf{a} \cdot \gamma,L) := \int_{\mathcal{M}_g} s(X,\mathbf{a} \cdot \gamma,L) \thinspace d\widehat{\mu}_\mathrm{wp}(X) 
	\end{equation}
	is a polynomial of degree $6g-6$ on $L$ with rational leading coefficient.
\end{exercise}

Let $\gamma:= \smash{\sum_{i=1}^k a_i \gamma_i}$ be a simple closed multi-curve on $S_g$ and $P(\gamma,L)$ be as in (\ref{eq:P}). Following Exercise \ref{ex:freq} we define the frequency of $\gamma$ to be the positive rational number
\begin{equation}
\label{eq:freq}
c(\gamma) := \lim_{L \to \infty} \frac{P(\gamma,L)}{L^{6g-6}}.
\end{equation}
Using Mirzakhani's recursion for the Weil-Petersson volume polynomials $V_{g,b}(\mathbf{L})$ \cite[\S 5]{Mir07a}, the frequency of any simple closed curve can be computed explicitely by means of a recursive algorithm.

\begin{exercise}
	\label{ex:freq_2}
	Let $\gamma_1$ and $\gamma_2$ be non-separating and separating simple closed curves  on $S_2$, respectively. Using Exercise \ref{ex:VV} show that the frequencies of these simple closed curves are given by
	\[
	c(\gamma_1) =  1/576, \quad c(\gamma_2) = 1/27648.
	\]
\end{exercise}

We now discuss an analogue of the integrability bound in Exercise \ref{eq:dct} for the function $s(X,\gamma,L)$. Let us first introduce a more precise statement of the collar lemma discussed in \S 3. A proof of this result can be found in \cite[Lemma 13.6]{FM11}. Consider the width function $w \colon \mathbf{R}^+ \to \mathbf{R}^+$ given by
\[
w(x) := \mathrm{arcsinh}\left( \frac{1}{\sinh \left(\frac x 2 \right)}\right).
\]

\begin{lemma}
	\label{lem:collar_lemma}
	Let $\gamma$ be a simple closed geodesic on a closed, orientable hyperbolic surface $X$. Denote by $d$ the metric on $X$. Then, the subset $N_{\gamma}  \subseteq X$ defined as follows is an embedded annulus in $X$,
	\[
	N_{\gamma} := \{x \in X  \colon  d(x,\gamma) < w(\ell_\gamma(X))\}.
	\] 
\end{lemma}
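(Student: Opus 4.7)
The plan is the classical one via the universal cover, using the rigidity of hyperbolic pairs of pants to extract the precise collar width.

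Let $p\colon \mathbf{H}^2 \to X$ be the universal covering, fix a lift $\tilde\gamma \subseteq \mathbf{H}^2$ of $\gamma$, and let $T \in \pi_1(X)$ be the primitive hyperbolic deck transformation with axis $\tilde\gamma$ and translation length $\ell := \ell_\gamma(X)$. Since $\pi_1(X)$ is torsion-free, $\langle T\rangle$ is the full stabilizer of $\tilde\gamma$. The open hyperbolic strip $\tilde N := \{z \in \mathbf{H}^2 \colon d(z,\tilde\gamma) < w(\ell)\}$ is $T$-invariant, and Fermi coordinates along $\tilde\gamma$ identify $\tilde N$ with $\mathbf{R} \times (-w(\ell), w(\ell))$ on which $T$ acts by $(s,t) \mapsto (s+\ell, t)$. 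Hence $\tilde N / \langle T\rangle$ is an open annulus, and $p$ descends to a surjective local isometry $\bar p \colon \tilde N / \langle T\rangle \to N_\gamma$; the lemma reduces to showing that $\bar p$ is injective.

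Injectivity of $\bar p$ is equivalent to the uniform estimate $d(\tilde\gamma, g\tilde\gamma) \geq 2 w(\ell)$ for every $g \in \pi_1(X) \setminus \langle T\rangle$. For such a $g$, the geodesics $\tilde\gamma$ and $g\tilde\gamma$ are distinct lifts of the simple closed geodesic $\gamma$ and hence disjoint in $\mathbf{H}^2$; they also cannot share an ideal endpoint, since otherwise the primitive-stabilizer generators $T$ and $gTg^{-1}$ would produce a parabolic element in $\pi_1(X)$, contradicting the cocompactness of $\pi_1(X)$ (as $X$ is closed). They are therefore ultraparallel, with a unique common perpendicular of length $d > 0$ which I must show satisfies $d \geq 2 w(\ell)$.

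To prove this bound I would argue via the geometry of hyperbolic pairs of pants. Extend $\gamma$ to a pair of pants decomposition of $X$; two (possibly coincident) pairs of pants $Y$ are adjacent to $\gamma$. By Exercise~\ref{ex:pants_rigid}, each such $Y$ is the double of a right-angled hyperbolic hexagon with three alternating sides given by the half-boundary-lengths of $Y$, and the standard hexagon identities express the length $\sigma$ of the orthogeodesic from $\gamma$ to either other boundary component of $Y$ as an explicit function of the three boundary lengths of $Y$. A direct computation then yields
\[
\sigma \;>\; \mathrm{arccosh}\bigl(\coth(\ell/2)\bigr) \;=\; w(\ell),
\]
uniformly in the lengths of the other two boundaries of $Y$ (the infimum being attained only in the limit as both of these lengths tend to infinity). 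Hence the open $w(\ell)$-neighborhood of $\gamma$ inside each adjacent $Y$ is an embedded half-collar; gluing these half-collars along $\gamma$ produces the embedded annulus $N_\gamma$ and translates into the required distance estimate between lifts. The main obstacle is the final hexagon computation showing $\sigma > w(\ell)$ uniformly in the other boundary lengths, a classical but non-trivial application of the hyperbolic sine/cosine rules for right-angled hexagons; the preceding steps (universal-cover setup, ultraparallel analysis, and gluing of half-collars) are routine.
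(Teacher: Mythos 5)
The paper itself gives no proof of this statement; it cites \cite[Lemma 13.6]{FM11}, so there is nothing internal to compare against. Your setup through the reduction to the estimate $d(\tilde\gamma, g\tilde\gamma) \geq 2w(\ell)$ for all $g \in \pi_1(X)\setminus\langle T\rangle$, and the observation that distinct lifts of a simple closed geodesic on a closed surface are ultraparallel, are all correct.

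The gap is in the final step, and it is not the computation you flag as the obstacle but the step you call routine. You bound $\sigma$, the common perpendicular from $\gamma$ to the \emph{other} boundary components of an adjacent pair of pants $Y$, but this is not the quantity that controls embeddedness of the half-collar. The half-collar in $Y$ fails to embed as soon as there is an \emph{essential arc from $\gamma$ to itself inside $Y$} of length $< 2w(\ell)$. The geodesic representative of such an arc is perpendicular to $\gamma$ at both ends and, by the reflection symmetry of the pants through its seams, crosses the opposite seam $h_{23}$ orthogonally once; its length is $2p$, where $p$ is the common perpendicular distance from the $\gamma$-side of the right-angled hexagon to its opposite side $h_{23}$. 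So what must be shown is $p \geq w(\ell)$, not $\sigma \geq w(\ell)$. The inequality $p > w(\ell)$ is indeed true — dropping that perpendicular splits the hexagon into two right-angled pentagons, and the pentagon identity gives $\sinh(a_1)\sinh(p) = \cosh(\ell_2/2)$ with $a_1 < \ell_1/2$, whence $\sinh p > \cosh(\ell_2/2)/\sinh(\ell_1/2) \geq 1/\sinh(\ell_1/2) = \sinh w(\ell_1)$ — but it is a pentagon identity, not the hexagon identity you invoke, and it is not implied by your bound on $\sigma$: the weaker inequality $\cosh \sigma > \coth(\ell_1/2)$ alone yields only $\cosh p = \sinh\sigma\,\sinh(\ell_2/2) > \sinh(\ell_2/2)/\sinh(\ell_1/2)$, which can be $< \coth(\ell_1/2)$ when $\ell_2$ is small.

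Beyond the pentagon computation, the ``gluing" has further content. First, half-collars defined using $d_Y$ need not patch together to give the set $N_\gamma$ defined with $d_X$. Second, the perpendicular realizing $d(\tilde\gamma, g\tilde\gamma)$ can cross several lifted pants, some of which may have $\gamma$ itself on their boundary, so the single-pants estimates do not immediately cover all $g$. The standard remedy is to pass to a pair $(\tilde\gamma, g\tilde\gamma)$ of minimal distance, observe that its common perpendicular cannot cross another lift of $\gamma$ (or else the minimum would be cut in half), and then apply $\sigma > w(\ell)$ to the first and last pieces if the perpendicular does cross another pants curve, and $p \geq w(\ell)$ if it does not. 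With those two inputs the argument closes, but as written your proof supplies only one of the two, and asserts as routine the combinatorial step where the other one is needed.
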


Let $\mathcal{P} := \smash{(\gamma_i)_{i=1}^{3g-3}}$ be a pair of pants decomposition of $S_g$ and $\smash{(m_i,t_i)_{i=1}^{3g-3}}$ be a set of Dehn-Thurston coordinates of $\ml$ induced by $\mathcal{P}$. Given a measured geodesic lamination $\lambda \in \ml$ and $X \in \mathcal{T}_g$, define the combinatorial length of $\lambda$ with respect $X$ and $\mathcal{P}$ as
\[
L_\lambda(X,\mathcal{P}) := \sum_{i=1}^N (m_i(\gamma) \cdot w(\ell_{\gamma_i}(X)) + |t_i(\gamma)| \cdot \ell_{\gamma_i}(X)).
\]
For a simple closed curve $\alpha$ on $S_g$ and $X \in \mathcal{T}_g$ this definition has a concrete interpretation: add the width $w(\ell_{\gamma_i}(X))$ of the collar given by Lemma \ref{lem:collar_lemma} to the combinatorial length of $\alpha$ every time it intersects $\gamma_i$ and add the length $\ell_{\gamma_i}(X)$ to the combinatorial length of $\alpha$ every time it twists around $\gamma_i$. A pair of pants decomposition $\mathcal{P} := \smash{(\gamma_i)_{i=1}^{3g-3}}$ of $S_g$ is said to be $L$-bounded with respect to $X \in \mathcal{T}_g$ for some $L > 0$ if $\ell_{\gamma_i}(X) \leq L$ for every $i \in \{1,\dots,3g-3\}$. The following result of Mirzakhani shows that combinatorial lengths approximate hyperbolic lengths in a coarse sense.

\begin{proposition}
	\cite[Proposition 3.5]{Mir08b}
	\label{prop:ength_comp}
	For every $L > 0$ there exists $C = C(L) > 0$ such that for every $X \in \mathcal{T}_{g}$ and every pair of pants decomposition $\mathcal{P}$ of $S_{g}$ that is $L$-bounded with respect to $X$ there exist Dehn-Thurston coordinates $\smash{(m_i,t_i)_{i=1}^{3g-3}}$ of $\mathcal{ML}_{g}$ induced by $\mathcal{P}$ such that for every $\lambda \in \mathcal{ML}_{g}$,
	\[
	C^{-1} \cdot L_{\mathcal{P}}(X,\lambda) \leq  \ell_\lambda(X) \leq C \cdot L_{\mathcal{P}}(X,\lambda).
	\]
\end{proposition}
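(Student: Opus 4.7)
The plan is to reduce the comparison to a geometric analysis of the geodesic representative on $X$ pant-by-pant and collar-by-collar. By Thurston's continuity of the hyperbolic length function on $\mathcal{ML}_g$, the fact that $L_{\mathcal{P}}(X,\cdot)$ is piecewise linear (hence continuous) in any fixed set of Dehn-Thurston coordinates, and the homogeneity of both sides under the $\mathbf{R}^+$-scaling of $\mathcal{ML}_g$, it suffices to prove the inequality for integral simple closed multi-curves $\alpha \in \mathcal{ML}_g(\mathbf{Z})$, for which we may work directly with the geodesic representative $\alpha^\ast$ on $X$.

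For the lower bound, decompose $\alpha^\ast$ into its pieces inside each embedded collar $N_{\gamma_i}$ furnished by Lemma \ref{lem:collar_lemma} and its pieces outside. Each $N_{\gamma_i}$ is isometric to a hyperbolic annulus of core length $\ell_{\gamma_i}(X)$ and total width $2w(\ell_{\gamma_i}(X))$. A geodesic collection of arcs in this annulus that crosses the core transversely $m_i := m_i(\alpha)$ times and wraps around it $|t_i| := |t_i(\alpha)|$ times has total length bounded below by a universal constant multiple of $m_i \cdot w(\ell_{\gamma_i}(X)) + |t_i| \cdot \ell_{\gamma_i}(X)$; this follows from an elementary hyperbolic Pythagorean-type estimate in the universal cover of the annulus, where the shortest path with prescribed transverse and longitudinal displacements is comparable to the sum of those displacements. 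Summing over all collars, and ignoring the (non-negative) contribution from outside the collars, yields the lower bound.

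For the upper bound, the strategy is to construct a piecewise geodesic competitor $\alpha'$ isotopic to $\alpha$ whose length is controlled, and to invoke $\ell_\alpha(X) \leq \ell_{\alpha'}(X)$. On each pair of pants $P_j$ of the decomposition (whose three boundary lengths are all at most $L$), there are only finitely many isotopy types of simple arc systems in the sense of the six arcs of Figure \ref{fig:DT}; a representative of each type can be realized by arcs whose lengths are $O(1)$ in the compact thick part of $P_j$ and $O(w(\ell_{\gamma_i}(X)))$ inside the collar adjacent to the $i$-th boundary, with constants depending only on $L$. Within each collar $N_{\gamma_i}$ we then add the prescribed $|t_i|$ twists about the core, contributing at most $|t_i| \cdot \ell_{\gamma_i}(X)$ per strand. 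Summing the pant and collar contributions bounds $\ell_{\alpha'}(X)$ by a constant multiple of $L_{\mathcal{P}}(X,\alpha)$.

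The main obstacle is obtaining a multiplicative constant $C$ that is uniform in both $X$ and $\mathcal{P}$. The $L$-boundedness of $\mathcal{P}$ is essential on two fronts: first, it forces $w(\ell_{\gamma_i}(X)) \geq w(L) > 0$, so that the $O(1)$ arc-length contributions in the thick part of each pant can be absorbed into the intersection weights; second, it constrains each pant to have boundary lengths in $(0,L]$, which (together with Exercise \ref{ex:pants_rigid}) places each pant in a set on which the finitely many topological arc-types enjoy a uniform length comparison with their combinatorial counterparts. A further subtle point, which explains why one must allow a choice of Dehn-Thurston coordinates rather than fixing them a priori, is selecting the zero-twist marking on each $\gamma_i$ so that $|t_i(\alpha)|$ matches the actual hyperbolic winding of $\alpha^\ast$ through $N_{\gamma_i}$ up to an $O(m_i(\alpha))$ error, which is then absorbed into the intersection term by the first observation above.
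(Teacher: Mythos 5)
The survey states this proposition without proof, citing Mirzakhani's original paper. Your sketch follows essentially the standard argument there---a collar-by-collar lower bound using Lemma~\ref{lem:collar_lemma}, a pant-by-pant piecewise-geodesic competitor for the upper bound, and an adaptive choice of twist origins so that $|t_i(\alpha)|$ tracks the actual hyperbolic winding of $\alpha^\ast$ through $N_{\gamma_i}$ up to an $O(m_i(\alpha))$ error---and it correctly isolates where the $L$-boundedness is used (namely $w(\ell_{\gamma_i}(X)) \geq w(L) > 0$ and $\ell_{\gamma_i}(X) \leq L$) to absorb the error terms and make the multiplicative constant depend only on $L$. The outline is sound.
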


To prove the aforementioned integrability bound for the counting function $s(X,\gamma,L)$ we will also use the following stronger version of Bers's theorem. For a proof see
\cite[Theorem 12.8]{FM11}.

\begin{theorem}
	\label{theo:bers}
	For every $\epsilon > 0$ there exists $L = L(\epsilon) > 0$ with the following property. Let $X \in \mathcal{T}_{g}$ be a marked hyperbolic structure and $\gamma := (\gamma_i)_{i=1}^k$ be a simple closed multi-curve on $S_g$ such that
	\[
	\ell_{\gamma_i}(X) < \epsilon, \ \forall i=1,\dots,k.
	\]
	Then, there exists a completion of $\gamma$ to a pair of pants decomposition $\mathcal{P}:= (\gamma_i)_{i=1}^{3g-3}$ of $S_g$ such that
	\[
	\ell_{\gamma_i}(X) < L, \ \forall i=1,\dots,3g-3.
	\]
\end{theorem}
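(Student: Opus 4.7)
The plan is to reduce Theorem \ref{theo:bers} to a version of Bers's theorem for bordered hyperbolic surfaces. Given $X$ and $\gamma := (\gamma_i)_{i=1}^k$ with each $\ell_{\gamma_i}(X) < \epsilon$, I would first cut $X$ along the components of $\gamma$ to obtain a (possibly disconnected) hyperbolic surface $X(\gamma)$ whose connected components $X_1,\dots,X_c$ are bordered hyperbolic surfaces with geodesic boundary, and with every boundary component of length less than $\epsilon$. Their topological types $(g_j,b_j)$ satisfy $\sum_j \chi(S_{g_j,b_j}) = \chi(S_g)$ and therefore range over a finite set depending only on $g$. It suffices to establish the following bordered analogue: for every triple $(\epsilon,g',b')$ with $2-2g'-b'<0$ there exists $L_0 = L_0(\epsilon,g',b')$ such that any hyperbolic structure on $S_{g',b'}$ with all boundary lengths less than $\epsilon$ admits a pair of pants decomposition with every interior cuff of length less than $L_0$. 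Once this is proved, setting $L := \max(\epsilon,\max_j L_0(\epsilon,g_j,b_j))$ and taking the union of $\gamma$ with the internal pants decompositions of each piece $X_j$ yields the required pair of pants decomposition of $S_g$.

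\textbf{Induction on complexity.} The bordered statement is naturally proved by induction on the complexity $\xi := 3g'-3+b'$. The base case $\xi = 0$ is a pair of pants, where the decomposition is empty. For $\xi \geq 1$, the key input is the following length bound: there exists a constant $B = B(\epsilon,g',b')$ such that every hyperbolic structure on $S_{g',b'}$ with boundary lengths less than $\epsilon$ admits a non-peripheral simple closed geodesic of length at most $B$. Cutting along such a geodesic yields one or two hyperbolic pieces of strictly smaller complexity, each with boundary lengths bounded by $\max(\epsilon,B)$. Applying the inductive hypothesis to each piece, with $\max(\epsilon,B)$ in place of $\epsilon$, produces the desired pair of pants decomposition, and unwinding the recursion gives an explicit constant $L_0(\epsilon,g',b')$.

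\textbf{The length bound: the main obstacle.} I expect the real technical work to lie in producing the short non-peripheral simple closed geodesic on a bordered surface with short boundary, and my plan is a standard thick--thin argument based on Lemma \ref{lem:collar_lemma}. By Gauss--Bonnet, the hyperbolic area of $S_{g',b'}$ equals $2\pi|2-2g'-b'|$, independently of the metric. Each boundary geodesic of length less than $\epsilon$ carries a one-sided collar of width $w(\epsilon)$, and by the collar lemma these collars are pairwise disjoint embedded half-annuli of uniformly bounded hyperbolic area. Let $Y^*$ denote the complement of these collars. Fix a two-dimensional Margulis constant $\epsilon_0 > 0$ small enough that the Margulis tubes around any simple closed geodesic of length less than $\epsilon_0$ are disjoint from the boundary collars fixed above. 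Either some point of $Y^*$ has injectivity radius less than $\epsilon_0/2$, in which case the Margulis lemma places it in the standard tube of a simple closed geodesic of length less than $\epsilon_0$ that is non-peripheral by our choice of $\epsilon_0$, producing the desired geodesic; or $Y^*$ has injectivity radius at least $\epsilon_0/2$ everywhere, in which case a ball-packing argument against the area bound forces the diameter of $Y^*$ to be bounded by a constant depending only on $g',b',\epsilon_0$, and any essential non-peripheral closed curve contained in $Y^*$ tightens to a simple closed geodesic of correspondingly bounded length. Either alternative yields $B(\epsilon,g',b')$ and closes the induction.
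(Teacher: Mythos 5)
The paper does not prove Theorem \ref{theo:bers}; it cites \cite[Theorem 12.8]{FM11}. Your framework---reduce to bordered surfaces, induct on complexity $3g'-3+b'$, at each stage produce a short non-peripheral simple closed geodesic, cut, and feed the new (larger but controlled) boundary bound back into the inductive hypothesis---is the standard one and is a sound skeleton.

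The gap is in the length bound, which is the real content of Bers's theorem, and two steps do not hold as written. First, a point of $Y^*$ with injectivity radius below $\epsilon_0/2$ need not lie in a Margulis tube around an \emph{interior} geodesic: the Margulis $\epsilon_0$-thin region around a boundary geodesic of length $\ell<\epsilon$ typically extends past the collar of width $w(\ell)$ you removed when forming $Y^*$, since the collar-lemma width and the $\epsilon_0$-thin-part width are controlled by different parameters and are not nested the way you need. The short homotopy class detected at such a point can be the boundary curve itself, so ``non-peripheral by our choice of $\epsilon_0$'' does not follow. This is repairable (define $Y^*$ by removing the $\epsilon_0$-thin parts of the boundary curves, or split off the case ``an interior $\epsilon_0$-short geodesic exists'' first), but it is a wrong step as written. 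Second, and more seriously, the sentence ``any essential non-peripheral closed curve contained in $Y^*$ tightens to a simple closed geodesic of correspondingly bounded length'' is false on two counts: being contained in a region of diameter $D$ does not bound a curve's \emph{length}, and the geodesic representative of a non-simple closed curve is not simple. What the area and injectivity-radius bounds actually yield is a geodesic \emph{loop} of bounded length based at a point of $Y^*$ (grow an embedded metric ball until the Gauss--Bonnet area bound forces self-overlap). Turning that loop into a short simple non-peripheral closed geodesic requires an extra argument---classically, resolving self-intersections by arc surgery to produce strictly shorter essential loops, together with a check that the output is not boundary-parallel (e.g.\ via a short arc with endpoints on the boundary and a regular-neighborhood construction)---and that surgery step is exactly what you have elided. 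It is the engine of the induction; without it the proof does not close.
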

$ $

By Lemma \ref{lem:collar_lemma}, there exists a constant $\epsilon > 0$ such that on any closed, orientable hyperbolic surface no two closed geodesics of length $< \epsilon$ intersect. For the rest of this section we fix such a constant and denote it by $\epsilon > 0$. Consider the measurable function $u \colon \mathcal{M}_g \to \mathbf{R}^+$ given for every $X \in \mathcal{M}_g$ by
\[
u(X) := \prod_{\gamma \colon \ell_{\gamma}(X) < \epsilon} \frac{1}{\ell_{\gamma}(X)},
\]
where the product runs over all simple closed geodesics $\gamma$ on $X$ of length $\ell_{\gamma}(X) < \epsilon$. We interpret empty products as taking the value $1$. The following exercise is an analogue of Exercise \ref{eq:dct}.

\begin{exercise}
	\label{ex:int_strong}
	Show there exists a constant $C > 0$ such that for every $X \in \mathcal{M}_g$ and every $L > 0$,
	\begin{equation}
	\label{eq:V1}
	s(X,\gamma,L) \leq C \cdot L^{6g-6} \cdot u(X).
	\end{equation}
	Additionally, show that the function $u \colon \mathcal{M}_g \to \mathbf{R}^+$ is integrable with respect to $\widehat{\mu}_\mathrm{wp}$, i.e.,
	\begin{equation}
	\label{eq:V2}
	\int_{\mathcal{M}_g} u(X) \thinspace d\widehat{\mu}_\mathrm{wp}(X) < \infty.
	\end{equation}
	\textit{Hint: To prove (\ref{eq:V1}) use Proposition \ref{prop:ength_comp} and Theorem \ref{theo:bers} to reduce to a lattice point counting problem in Dehn-Thurston coordinates. To prove (\ref{eq:V2}) follow the same approach as in Exercise \ref{ex:wp_vol}.}
\end{exercise}

\subsection*{Equidistribution of counting measures.} We are finally ready to prove that the sequence of counting measure $\smash{(\nu^\gamma_L)_{L > 0}}$ on $\ml$ converges in the weak-$\star$ topology to a constant multiple of the Thurston measure $\mu_\mathrm{Thu}$. The following exercise is an analogue of Theorem \ref{theo:equid_lat}.

\begin{exercise}
	\label{ex:equid}
	Let $\gamma:= \smash{\sum_{i=1}^k a_i \gamma_i}$ be an integral simple closed multi-curve on $S_g$. Show that, with respect to the weak-$\star$ topology for measures on $\ml$,
	\[
	\lim_{L \to \infty} \mu_L^\gamma = c(\gamma) \cdot \mu_{\mathrm{Thu}}.
	\]
	\textit{Hint: Follow the same approach as in the proof of Theorem \ref{theo:equid_lat}.}
\end{exercise}

\subsection*{Counting simple closed multi-curves.} For every marked hyperbolic structure $X \in \mathcal{T}_g$ denote
\begin{equation}
\label{eq:B}
B(X):= \mu_{\mathrm{Thu}}\left( \{\lambda \in \ml \ | \ \ell_\lambda(X) \leq 1\}\right).
\end{equation}
As the Thurston measure $\mu_{\mathrm{Thu}}$ is invariant with respect to the $\mcg$ action on $\ml$, the value $B(X)$ is independent of the marking of $X \in \mathcal{T}_g$ and depends only on the underlying hyperbolic structure $X \in \mathcal{M}_g$. Thus, (\ref{eq:B}) gives rise to a function $B \colon \mathcal{M}_g \to \mathbf{R}^+$ known as the Mirzakahani function.

\begin{exercise}
	\label{ex:int}
	Show there exists a constant $C > 0$ such that for every $X \in \mathcal{M}_g$
	\begin{equation*}
	B(X) \leq C \cdot u(X).
	\end{equation*}
	Conclude that the function $B \colon \mathcal{M}_g \to \mathbf{R}^+$ is integrable with respect to $\widehat{\mu}_\mathrm{wp}$, i.e.,
	\[
	\int_{\mathcal{M}_g} B(X) \thinspace d \widehat{\mu}_\mathrm{wp}(X) < \infty.
	\]
	\textit{Hint: Use Proposition \ref{prop:ength_comp} and interpret $\mu_{\mathrm{Thu}}$ as the Lebesgue measure in Dehn-Thurston coordinates.}
\end{exercise}

Following Exercise \ref{ex:int} we consider the constant $b_g > 0$ defined as
\begin{equation}
\label{eq:bg}
b_g := \int_{\mathcal{M}_g} B(X) \thinspace d \widehat{\mu}_\mathrm{wp}(X).
\end{equation}

We are now ready to prove the following more precise version of Theorem \ref{theo:main_5}.

\begin{exercise}
	\label{ex:main_6}
	Let $\gamma:= \smash{\sum_{i=1}^k a_i \gamma_i}$ be an integral simple closed multi-curve on $S_g$ and $X \in \mathcal{T}_g$ be a marked hyperbolic structure on $S_g$. Show that the following identity holds
	\[
	\lim_{L \to \infty} \frac{s(X,\gamma,L)}{L^{6g-6}} = \frac{c(\gamma) \cdot B(X)}{b_g}.
	\]
	\textit{Hint: Follow the same approach as in the proof of Theorem \ref{theo:main_lec_1_red}. Aside from Exercises \ref{ex:equid} and \ref{ex:int}, it will be useful to recall the identity in (\ref{eq:connection}) as well as Exercise \ref{eq:boundary}. }
\end{exercise}

We are finally ready to prove Theorem \ref{theo:main}, the main result of this survey.

\begin{exercise}
	Using (\ref{eq:sum}) and Exercise \ref{ex:main_6} prove Theorem \ref{theo:main}, i.e., show that for every closed, orientable hyperbolic surface $X$ of genus $g \geq 2$ there exists a constant $n(X) > 0$ such that
	\[
	\lim_{L \to \infty} \frac{s(X,L)}{L^{6g-6}} = n(X).
	\]
\end{exercise}

Let us end this section by proving the following more precise version of Theorem \ref{theo:main_2}.

\begin{exercise}
	Let $\gamma_1$ and $\gamma_2$ be non-separating and separating simple closed curves  on $S_2$. Using Exercises \ref{ex:freq_2} and \ref{ex:main_6} show that on any genus $2$ hyperbolic surface $X \in \mathcal{M}_2$ it is $48$ times more likely for a long random simple closed geodesic to be non-separating rather than separating, i.e., show that
	\[
	\lim_{L \to \infty} \frac{s(X,\gamma_1,L)}{s(X,\gamma_2,L)} = 48.
	\]
\end{exercise}

\section{Beyond simple closed geodesics} 

\subsection*{Outline of this section.} In this section we give a brief overview of several counting results for closed curves on surfaces and other related objects that have been proved since the debut of Mirzakhani's thesis. Rather than exhaustively covering the great amount of material available we aim at giving a landscape picture of the relevance of these results and of the variety of techniques behind their proofs. In particular, although many of the results that follow hold for surfaces with punctures, we focus on the case of closed surfaces. The reader is encouraged to look into the cited references for more details.

\subsection*{General length functions.} For the rest of this section we fix an integer $g \geq 2$ and a connected, oriented, closed surface $S_g$ of genus $g$. Let $\gamma := \smash{\sum_{i=1}^k a_i \gamma_i}$ be a simple closed multi-curve on $S_g$. Recall that $\mathcal{ML}_g$ denotes the space of measured geodesic laminations on $S_g$ and that this space supports a natural $\mathbf{R}_+$ scaling action. Consider a continuous function $\ell \colon \mathcal{ML}_g \to \mathbf{R}_+$ that is homogeneous, i.e., such that $\ell(t \cdot \lambda) = t \cdot \ell(\lambda)$ for every $t >0$ and every $\lambda \in \ml$. Interesting examples of such functions include the extremal length with respect to a given conformal structure \cite{Ker80} and the length of geodesic representatives with respect to an arbitrary negatively curved metric \cite{O90}. For every $L > 0$ consider the counting function
\[
s(\ell,\gamma,L) := \#\{\alpha \in \mcg \cdot \gamma \ | \ \ell(\alpha) \leq L \}.
\]

Recall that $\mu_{\mathrm{Thu}}$ denotes the Thurston measure on $\ml$. Given a continuous, homogeneous function $\ell \colon \ml \to \mathbf{R}_+$ consider the finite, positive constant
\[
B(\ell) := \mu_{\mathrm{Thu}}\left( \left\lbrace \lambda \in \ml \ | \ \ell(\lambda) \leq 1 \right\rbrace\right).
\]
Recall the definition of the constants $c(\gamma) > 0$ and $b_g > 0$ introduced in (\ref{eq:freq}) and (\ref{eq:bg}). A careful consideration of the techniques introduced in the proof of Theorem \ref{theo:main_5} show that the same asymptotic estimates can be proved in this more general setting. Indeed, the following holds.

\begin{exercise}
	Let $\gamma:= \smash{\sum_{i=1}^k a_i \gamma_i}$ be an integral simple closed multi-curve on $S_g$ and $\ell \colon \ml \to \mathbf{R}_+$ be a continuous, homogeneous function. Show that the following identity holds
	\[
	\lim_{L \to \infty} \frac{s(\ell,\gamma,L)}{L^{6g-6}} = \frac{c(\gamma) \cdot B(\ell)}{b_g}.
	\]
	\textit{Hint: Follow the same approach as in Exercise \ref{ex:main_6}.}
\end{exercise}

\subsection*{Non-simple closed geodesics.} One can also consider counting problems for closed geodesics that are not simple. Extending the definition for simple closed curves, we say two closed curves on homeomorphic surfaces have the same topological type if there exists a homeomorphism between the surfaces that identifies the free homotopy classes of the curves.  A closed curve is said to be filling if every homotopically non-trivial closed curve on the surface intersects the curve.

Let $X \in \mathcal{T}_g$ be a marked hyperbolic structure and $\gamma$ be a filling closed curve on $S_g$. For every $L > 0$ consider the counting function
\[
f(X,\gamma,L) := \# \{\alpha \in \mcg \cdot \gamma \ | \ \ell_{\alpha}(X) \leq L\}.
\]
This quantity does not depend on the marking of $X \in \mathcal{T}_g$ and corresponds to the number of closed geodesics on $X$ of the same topological type as $\gamma$ and length $\leq L$. Consider the finite constant
\[
c(\gamma) := \mu_{\mathrm{Thu}}\left(\{\lambda \in \ml \ | \ i(\gamma,\lambda) \leq 1 \} \right) / \#\mathrm{Stab}(\gamma),
\]
where $i(\gamma,\lambda)$ denotes the geometric intersection number between $\gamma$ and $\lambda$ \cite{Bon88} and $\mathrm{Stab}(\gamma) \subseteq \mcg$ denotes the stabilizer of $\gamma$ with respect to the natural mapping class group action. Recall the definition of the Mirzakhani function $B \colon \mathcal{M}_g \to \mathbf{R}_+$ in (\ref{eq:B}). In \cite{Mir16}, Mirzakhani introduced novel techniques, inspired by work of Margulis \cite{Mar04} and relying on previous work of herself on the ergodic theory of the earthquake flow \cite{Mir08a}, to prove asymptotic formulas for the counting function above.

\begin{theorem}
	\label{theo:mir16}
	\cite[Theorem 1.1]{Mir16}
	Let $X \in \mathcal{M}_g$ be a hyperbolic structure and $\gamma$ be a filling closed curve on $S_g$. Then, the following asymptotic formula holds,
	\[
	\lim_{L \to \infty} \frac{f(X,\gamma,L)}{L^{6g-6}} = \frac{c(\gamma) \cdot B(X)}{b_g}.
	\]
\end{theorem}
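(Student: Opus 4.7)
The plan is to adapt the strategy of the proof of Theorem \ref{theo:main_5}: establish a weak-$\star$ equidistribution statement for appropriate counting measures, and then evaluate on a natural ``length ball''. Since $\gamma$ is no longer simple, one cannot work directly inside $\ml$; instead, I would embed the orbit $\mcg \cdot \gamma$ into the space $\mathcal{C}(S_g)$ of geodesic currents on $S_g$, in which both $\ml$ and the set of weighted closed curves sit, and in which the length pairing extends as $\ell_\alpha(X) = i(\alpha, L_X)$ via the Liouville current $L_X$ of $X$. With the counting measures
\[
\mu_L^\gamma := \frac{1}{L^{6g-6}} \sum_{\alpha \in \mcg \cdot \gamma} \delta_{\alpha/L}
\]
on $\mathcal{C}(S_g)$, one has $f(X,\gamma,L)/L^{6g-6} = \mu_L^\gamma(B'_X)$, where $B'_X$ is the current-level unit ball for $\ell(\cdot,L_X)$, in perfect parallel with (\ref{eq:connection}).

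Next, I would show that every weak-$\star$ limit point of $(\mu_L^\gamma)_{L > 0}$ is supported on $\ml$ and is a constant multiple of $\mu_{\mathrm{Thu}}$. Rescaled curves $\alpha/L$ tend to $\ml$ because $i(\alpha/L, \alpha/L) = i(\alpha,\alpha)/L^2 \to 0$, while $\mcg$-invariance of any limit is automatic. Then, paralleling Exercise \ref{ex:prim_meas_2}, Masur's theorem (Theorem \ref{theo:thu_erg}), Dehn--Thurston coordinates, the scaling relation (\ref{eq:thu_meas_scale}), and an absolute-continuity argument against $\mu_{\mathrm{Thu}}$ should force any weak-$\star$ limit to equal $c \cdot \mu_{\mathrm{Thu}}$ for some $c \geq 0$.

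The hard step is pinning down $c$ and showing it equals $c(\gamma) \cdot B(X) / b_g$; crucially, unlike in the simple-curve case, the constant must now depend on $X$. Here the simple-curve proof breaks: Mirzakhani's integration formula (Exercise \ref{ex:mir_int}) relies on the cut-and-glue fibration, which is intrinsically about simple multi-curves. My plan is to use instead the ergodicity of the earthquake flow on the bundle of unit--Thurston-norm measured laminations over $\mathcal{M}_g$, established by Mirzakhani in \cite{Mir08a}. The earthquake flow plays here the role played by the horocycle flow in the lattice setting. Following a Margulis-style thickening argument \cite{Mar04}, one integrates $f(\cdot, \gamma, L)$ against a smooth bump supported on a short earthquake flow box through $X$; equidistribution of long earthquake-flow orbits converts this into a space average against $\widehat{\mu}_{\mathrm{wp}}$, and computing both sides and letting the thickening shrink recovers the desired constant.

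The main obstacle will be executing the thickening argument rigorously: one needs a quantitative form of equidistribution for the earthquake flow sufficient to control the error introduced by smoothing, together with a careful passage to the limit in the thickening scale. One must also verify (a) that the boundary of $B'_X$ is $\mu_{\mathrm{Thu}}$-null, a homogeneity-of-$i(\gamma,\cdot)$ variant of Exercise \ref{eq:boundary}; and (b) an $X$-uniform dominated-convergence bound paralleling Exercise \ref{ex:int_strong}, needed when integrating the equidistribution statement against test functions on moduli space. With these in place, the factor $c(\gamma) = \mu_{\mathrm{Thu}}(\{i(\gamma,\cdot) \leq 1\})/\#\mathrm{Stab}(\gamma)$ should arise directly from the Thurston-measure content of the space average, and the $X$-dependence $B(X)/b_g$ should arise from the Mirzakhani function evaluated at the basepoint of the earthquake flow box.
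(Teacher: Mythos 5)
The paper does not prove this theorem; it cites \cite[Theorem~1.1]{Mir16} and, in a single sentence, indicates the method: a Margulis-style thickening argument combined with Mirzakhani's earlier results on the ergodic theory of the earthquake flow \cite{Mir08a}. Your high-level blueprint matches that description, and you correctly identify the decisive obstruction: the cut-and-glue fibration, hence Mirzakhani's integration formula (Exercise~\ref{ex:mir_int}), is unavailable once $\gamma$ is non-simple, so a new mechanism is required to compute the constant.

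There is, however, a conceptual slip that would derail the details. You write that ``unlike in the simple-curve case, the constant must now depend on $X$.'' This contradicts the setup you have just built and also misdescribes the simple-curve case. The counting measures $\mu_L^\gamma$ depend only on the orbit $\mcg\cdot\gamma$, not on $X$; any subsequential weak-$\star$ limit is therefore $c\cdot\mu_{\mathrm{Thu}}$ with $c$ depending only on $\gamma$. The hyperbolic structure $X$ enters exactly as in the simple case (compare Exercise~\ref{ex:main_6}): through evaluation on the ball $B'_X$, yielding $\mu^\gamma(B'_X)=c\cdot\mu_{\mathrm{Thu}}(B_X)=c\cdot B(X)$. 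The final answer $c(\gamma)B(X)/b_g$ depends on $X$ in \emph{both} the simple and filling cases; the claimed contrast is spurious. The genuine hard step is identifying the $X$-independent coefficient $c=c(\gamma)/b_g$, with $c(\gamma)=\mu_{\mathrm{Thu}}(\{i(\gamma,\cdot)\leq 1\})/\#\mathrm{Stab}(\gamma)$ as introduced just before the theorem's statement, without the integration formula; that is precisely where the earthquake flow and the thickening argument must be inserted, and it is to this $X$-independent quantity, not to $c(\gamma)B(X)/b_g$, that your space-averaging argument should attach.

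A second, substantive gap: to extract weak-$\star$ subsequential limits from $(\mu_L^\gamma)_{L>0}$ you need the measures to be locally uniformly bounded on $\mathcal{C}_g$, that is, an a priori polynomial bound $f(X,\gamma,L)=O_X(L^{6g-6})$. For simple multi-curves this comes from Dehn--Thurston coordinates as in Exercise~\ref{ex:int_strong}, but for filling closed curves such a bound is genuinely nontrivial and is one of the technical achievements of \cite{Mir16}. Your observation that $i(\alpha/L,\alpha/L)\to 0$ correctly shows that any subsequential limit must be carried by $\ml$, but it says nothing about whether such limits exist. Finally, one remark on framing: routing the argument through geodesic currents and weak-$\star$ limits of orbit counting measures is closer in spirit to the later approach of Rafi--Souto \cite{RS19} than to Mirzakhani's own, which stays inside $\ml$ and is organized around a carefully chosen average over flow boxes of the earthquake flow rather than around a compactness-then-rigidity argument.
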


A result analogous to Theorem \ref{theo:mir16} for closed geodesics of any topological type was later proved by Erlandsson and Souto \cite{ES19} using original arguments introduced in their earlier work \cite{ES16}. One of the main ideas of their work is to study how injective the procedure of smoothing a non-simple closed geodesic at its self-intersections is for closed geodesics of a given topological type.

\subsection*{Geodesic currents.} In \cite{Bon88}, Bonahon gave a unified treatment of several seemingly unrelated notions of length for closed curves on closed, orientable surfaces using the concept of geodesic currents. 

To define geodesic currents let us endow the surface $S_g$ with an auxiliary hyperbolic metric. The projective tangent bundle $PTS_g$ admits a $1$-dimensional foliation by lifts of geodesics on $S_g$. A geodesic current on $S_g$ is a Radon transverse measure of the geodesic foliation of $PTS_g$. Equivalently, a geodesic current on $S_g$ is a $\pi_1(S_g)$-invariant Radon measure on the space of unoriented geodesics of the universal cover of $S_g$. Endow the space of geodesic currents on $S_g$ with the weak-$\star$ topology. Different choices of auxiliary hyperbolic metrics on $S_g$ yield canonically identified spaces of geodesic currents \cite[Fact 1]{Bon88}. Denote the space of geodesic currents on $S_g$ by $\mathcal{C}_g$. This space supports a natural $\mathbf{R}^+$ scaling action and a natural $\mcg$ action \cite[\S 2]{RS19}. 

Free homotopy classes of weighted, unoriented closed curves on $S_g$ embed into $\mathcal{C}_g$ by considering their geodesic representatives with respect to any auxiliary hyperbolic metric. By work of Bonahon \cite[Proposition 2]{Bon88}, this embedding is dense. Moreover, the geometric intersection number pairing for closed curves on $S_g$ extends in a unique way to a continuous, symmetric, bilinear pairing $i(\cdot,\cdot)$ on $\mathcal{C}_g$ \cite[Proposition 3]{Bon88}.  In this sense, geodesic currents are to closed curves what measured geodesic laminations are to simple closed curves.

Many different metrics on $S_g$ embed into $\mathcal{C}_g$ in such a way that the geometric intersection number of the metric with any closed curve is equal to the length of the geodesic representatives of the closed curve with respect to the metric. The geodesic current corresponding to any such metric is usually refered to as its Liouville current. Examples of metrics admitting Liouville currents include hypebolic metrics \cite{Bon88} and negatively curved Riemannian metrics \cite{O90}.

A geodesic current $\alpha \in \mathcal{C}_g$ is said to be filling if $i(\alpha,\beta) > 0$ for every non-zero $\beta \in \mathcal{C}_g$. Relevant examples of filling geodesic currents include free homotopy classes of unoriented filling closed curves and the Liouville currents introduced above. Filling geodesic currents $\alpha \in \mathcal{C}_g$ have finite stabilizers $\mathrm{Stab}(\alpha) \subseteq \mcg$ with respect to the natural mapping class group action.

Consider a continuous function $\ell \colon \mathcal{C}_g \to \mathbf{R}_+$ that is homogeneous, i.e., such that $\ell(t \cdot \alpha) = t \cdot \ell(\alpha)$ for every $t > 0$ and every $\alpha \in \mathcal{C}_g$. Let $\alpha \in \mathcal{C}_g$ be a filling geodesic current. For every $L > 0$ consider the counting function
\[
\mathrm{cur}(\ell,\alpha,L) := \# \{\beta \in \mcg \cdot \alpha \ | \ \ell(\beta) \leq L \}.
\]

In \cite{RS19}, Rafi and Souto introduced novel methods for studying the asymptotics of these counting functions. Given any filling geodesic current $\alpha \in \mathcal{C}_g$ consider the finite positive constant
\begin{equation*}
c(\alpha) := \mu_{\mathrm{Thu}}\left(\{\lambda \in \ml \ | \ i(\alpha,\lambda) \leq 1\}\right) / \#\mathrm{Stab}(\alpha).
\end{equation*}

\begin{theorem}\cite[Main Theorem]{RS19}
	\label{theo:rs}
	Let $\ell \colon \mathcal{C}_g \to \mathbf{R}_+$ be a continuous, homogeneous function and $\alpha \in \mathcal{C}_g$ be a filling geodesic current. Then, the following asymptotic estimate holds,
	\[
	\lim_{L \to \infty} \frac{\mathrm{cur}(\ell,\alpha,L)}{L^{6g-6}} = \frac{B(\ell) \cdot c(\alpha)}{b_g}.
	\]
\end{theorem}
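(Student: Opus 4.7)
The plan is to mirror the structure of Exercise \ref{ex:main_6} and Theorem \ref{theo:main_5}, upgrading the counting measures from $\ml$ to $\mathcal{C}_g$ and replacing hyperbolic length by the general homogeneous function $\ell$. For every $L > 0$ define the rescaled counting measure on $\mathcal{C}_g$:
\[
\nu_L^\alpha := \frac{1}{L^{6g-6}} \sum_{\beta \in \mcg \cdot \alpha} \delta_{\frac{1}{L}\beta},
\]
and set $B_\ell := \{\gamma \in \mathcal{C}_g \colon \ell(\gamma) \leq 1\}$, so that $\mathrm{cur}(\ell,\alpha,L)/L^{6g-6} = \nu_L^\alpha(B_\ell)$. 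I will establish the weak-$\star$ convergence
\[
\lim_{L \to \infty} \nu_L^\alpha \;=\; \frac{c(\alpha)}{b_g}\cdot \mu_{\mathrm{Thu}}
\]
with $\mu_{\mathrm{Thu}}$ viewed as a measure on $\mathcal{C}_g$ supported on $\ml$, and then conclude by Theorem \ref{theo:port_1} applied to $B_\ell$. The boundary of $B_\ell$ meets $\ml$ in a $\mu_{\mathrm{Thu}}$-null set by Exercise \ref{eq:boundary} applied to the continuous homogeneous function $\ell|_{\ml}$, which validates the use of Portmanteau.

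\textbf{Limit points are multiples of $\mu_{\mathrm{Thu}}$.}
To extract limit points I first prove the uniform bound $\nu_L^\alpha(K) \leq C_K$ for every compact $K \subseteq \mathcal{C}_g$. Since the intersection pairing is continuous, $K$ is contained in a set of the form $\{\gamma \colon i(\alpha_0, \gamma) \leq M_K\}$ for an auxiliary filling current $\alpha_0$ (for instance a Liouville current), so it suffices to estimate $\#\{\beta \in \mcg \cdot \alpha \colon i(\alpha_0,\beta) \leq M_K L\} = O(L^{6g-6})$; this can be established by unfolding via Proposition \ref{prop:ength_comp} and Theorem \ref{theo:bers} into a Dehn--Thurston lattice-point count, in the spirit of Exercise \ref{ex:int_strong}. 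Theorem \ref{theo:ba_al} then supplies subsequential weak-$\star$ limits $\nu^\alpha$. Each $\nu^\alpha$ is $\mcg$-invariant since each $\nu_L^\alpha$ is. Moreover, $\nu^\alpha$ is supported on $\ml$: the atoms of $\nu_L^\alpha$ satisfy $i(\tfrac{1}{L}\beta, \tfrac{1}{L}\beta) = i(\beta,\beta)/L^2$, so by continuity of self-intersection and Bonahon's characterization of $\ml$ as the zero locus of self-intersection on $\mathcal{C}_g$, no mass can accumulate away from $\ml$ in the limit. An absolute-continuity argument in Dehn--Thurston coordinates analogous to Exercise \ref{ex:prim_meas_2} shows $\nu^\alpha \ll \mu_{\mathrm{Thu}}$, and combining Exercise \ref{ex:ergodic} with Masur's ergodicity theorem (Theorem \ref{theo:thu_erg}) yields $\nu^\alpha = c \cdot \mu_{\mathrm{Thu}}$ for some constant $c \geq 0$.

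\textbf{Identification of $c$ and the main obstacle.}
To pin down $c = c(\alpha)/b_g$ independently of subsequence, I test the convergence against the unit ball $B_{X_0}$ of an auxiliary hyperbolic structure $X_0 \in \mathcal{T}_g$: weak-$\star$ convergence together with Portmanteau gives $\nu_L^\alpha(B_{X_0}) \to c\cdot B(X_0)$, while by definition $\nu_L^\alpha(B_{X_0}) = \mathrm{cur}(\ell_{X_0},\alpha,L)/L^{6g-6}$. It therefore suffices to know, in the special case $\ell = \ell_{X_0}$, that
\[
\lim_{L \to \infty} \frac{\mathrm{cur}(\ell_{X_0},\alpha,L)}{L^{6g-6}} = \frac{c(\alpha)\, B(X_0)}{b_g}.
\]
For a filling closed curve $\alpha$ this is exactly Theorem \ref{theo:mir16}; to upgrade to an arbitrary filling current one approximates $\alpha$ in $\mathcal{C}_g$ by integer-weighted filling simple closed multi-curves (dense in the filling locus) and invokes weak-$\star$ continuity of the normalized count in the current variable, combined with continuity of $\alpha \mapsto c(\alpha)$ on filling currents (which in turn follows from continuity of $i(\alpha,\cdot)$ together with generic triviality of $\mathrm{Stab}(\alpha)$). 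Matching the two computations forces $c = c(\alpha)/b_g$; this identification is subsequence-independent, so by Exercise \ref{ex:conv} the full sequence $\nu_L^\alpha$ converges, and Theorem \ref{theo:rs} follows. \emph{The main obstacle is the continuity/approximation argument in this last step}: Mirzakhani's integration formula (Exercise \ref{ex:mir_int}) applies only to simple closed multi-curves, so promoting Theorem \ref{theo:mir16} from closed curves to filling currents demands a delicate stability argument for the leading coefficient of the counting function under weak-$\star$ perturbation of $\alpha$, which is the technical heart of the Rafi--Souto theorem.
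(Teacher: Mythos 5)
The paper does not prove this statement: Theorem \ref{theo:rs} is stated as a citation of \cite[Main Theorem]{RS19} and the survey offers no argument for it, so there is no ``paper's proof'' to compare your sketch against. What can be assessed is whether your sketch is a viable route.

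Your plan is a sensible transplant of the survey's lattice-style template from $\ml$ to $\mathcal{C}_g$, and you are right that the final approximation step is where the real work is. But several intermediate steps that are routine when the orbit lives in $\ml$ do not carry over to general currents, and these are not flagged. First, the uniform bound $\nu_L^\alpha(K)\leq C_K$ is claimed via ``unfolding via Proposition \ref{prop:ength_comp} and Theorem \ref{theo:bers} into a Dehn--Thurston lattice-point count.'' This does not apply: $\mcg\cdot\alpha$ lies in $\mathcal{C}_g\setminus\ml$ whenever $i(\alpha,\alpha)>0$, Dehn--Thurston coordinates only parametrize $\ml$, and Proposition \ref{prop:ength_comp} compares hyperbolic lengths of \emph{laminations}, not intersection numbers of a fixed current against points of a mapping class group orbit in $\mathcal{C}_g$. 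Some substitute device is needed (for instance, relating $i(L_{X_0},\phi\cdot\alpha)$ to the position of $\phi^{-1}\cdot X_0$ in $\mathcal{T}_g$) and none is supplied. Second, the absolute continuity of subsequential limits with respect to $\mu_{\mathrm{Thu}}$ is asserted ``analogous to Exercise \ref{ex:prim_meas_2},'' but that exercise crucially uses that a simple multi-curve orbit sits inside the integer lattice in Dehn--Thurston coordinates, so the rescaled counting measure is dominated by the full lattice counting measure; for a current orbit there is no such lattice structure, and one would have to derive absolute continuity from the (currently unjustified) uniform bound instead.

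Third, and most importantly, the identification of the constant rests on approximating $\alpha$ by filling integral multi-curves $\alpha_n$ and passing to the limit under Theorem \ref{theo:mir16}. As written this interchanges $L\to\infty$ and $n\to\infty$: you need that $\mathrm{cur}(\ell_{X_0},\alpha_n,L)/L^{6g-6}\to c(\alpha_n)B(X_0)/b_g$ uniformly enough in $n$, and that $c(\alpha_n)\to c(\alpha)$, simultaneously. You correctly call this ``the technical heart,'' but the sketch presents it as following from continuity of $i(\alpha,\cdot)$ and generic triviality of $\mathrm{Stab}(\alpha)$, which at most gives pointwise control at fixed $L$, not the required uniformity. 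So the proposal is a reasonable skeleton that locates the crux correctly, but it leaves both the coarse upper bound and the passage from multi-curves to currents essentially unproved, and these are precisely the points at which the Rafi--Souto argument departs from the Mirzakhani template.
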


Theorem \ref{theo:rs} was later generalized by Erlandsson and Souto for non-filling geodesic currents \cite{ES20}. As a remarkable application of Theorem \ref{theo:rs}, Rafi and Souto proved an asymptotic formula for the number of points in a mapping class group of Teichmüller space $\mathcal{T}_g$ that lie within a ball of given center and large radius with respect to Thurston's asymmetric metric \cite[Theorem 1.1]{RS19}. Thurston's asymmetric metric quantifies the minimal Lipschitz constant among Lipschitz maps between marked hyperbolic structures on $S_g$ \cite{Thu98}.

\subsection*{Square-tiled surfaces.} A square-tiled surface is a closed, connected, oriented surface constructed from finitely many disjoint unit area squares on the complex plane, with sides parallel to the real and imaginary axes, by identifying pairs of sides by translation and/or $180\degree$ rotation. We assume square-tiled surfaces have no points of cone angle $\pi$. The horizontal core multi-curve of a square tiled-surface is the integrally weighted simple closed multi-curve obtained by concatenating the horizontal segments running through the middle of each square. The vertical core multi-curve of a square tiled-surface is defined in an analogous way. See Figure \ref{fig:example} for an example. 

\begin{figure}[h]
	\centering
	\begin{subfigure}[b]{0.285\textwidth}
		\centering
		\includegraphics[width=0.6\textwidth]{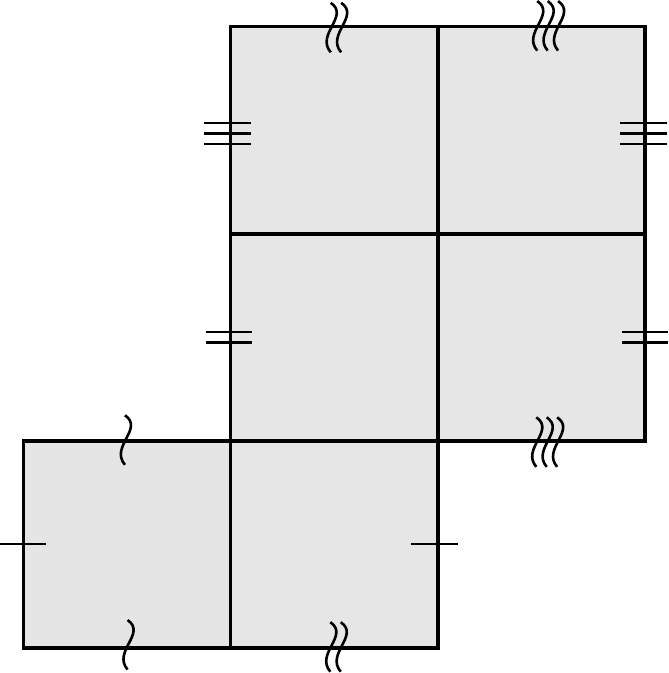}
		\caption{Square-tiled surface.}
		\label{fig:sq_tiled}
	\end{subfigure}
	\quad \quad \quad
	~ 
	\begin{subfigure}[b]{0.31\textwidth}
		\centering
		\includegraphics[width=0.6\textwidth]{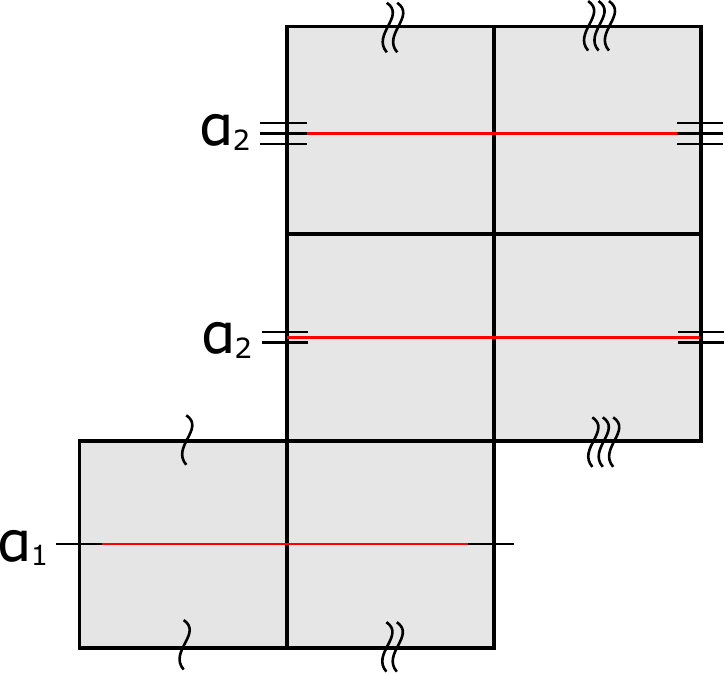}
		\caption{Horizontal core multi-curve.}
		\label{fig:horizontal}
	\end{subfigure}
	\quad \quad \quad
	~ 
	\begin{subfigure}[b]{0.285\textwidth}
		\centering
		\includegraphics[width=0.6\textwidth]{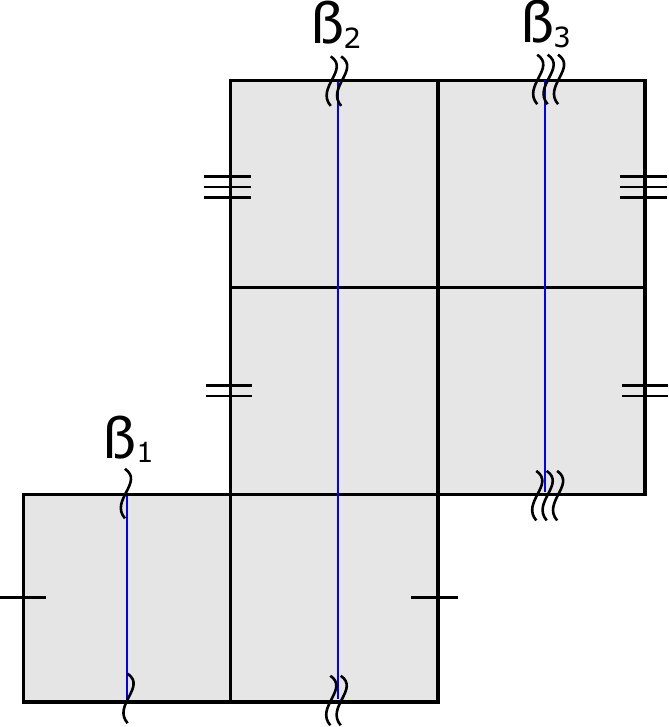}
		\caption{Vertical core multi-curve.}
		\label{fig:vertical}
	\end{subfigure}
	\caption{Example of a square-tiled surface of genus $2$. The horizontal core multi-curve is $\alpha_1 + 2 \alpha_2$. The vertical core multi-curve is $\beta_1 + \beta_2 + \beta_3$.} \label{fig:example}
\end{figure}

Recall that two integrally weighted simple closed multi-curves on homeomorphic surfaces have the same topological type if there exists a homeomorphism between the surfaces mapping one multi-curve to the other preserving the weights. Let $\alpha := \smash{\sum_{i=1}^k a_i \alpha_i}$ be an integral simple closed multi-curve on $S_g$. For every $L > 0$ consider the counting function
\[
sq(\alpha,L) := \#
\left\lbrace 
\begin{array}{c}
\text{square-tiled surfaces with horizontal core multi-curve}\\
\text{of the same topological type as $\alpha$ and $\leq L$ squares} 
\end{array}
\right\rbrace /\sim,
\]
where $\sim$ denotes the equivalence relation induced by cut and paste operations. As in the counting problems above, we are interested in the asymptotics of $\mathrm{sq}(\alpha,L)$ as $L \to \infty$. Denote by
\begin{equation}
	\label{eq:epsilon}
	\epsilon_{g} := \left\lbrace
	\begin{array}{ccl}
		2 & \text{if} & g=2, \\ 
		1 & \text{if} & g \neq 2,\\
	\end{array} \right.
\end{equation}
the number of automorphisms of a generic square-tiled surface of genus $g$. Recall the definition of the constants $c(\gamma) > 0$ introduced in (\ref{eq:freq}).

\begin{theorem}
	\label{theo:square_tiled}
	Let $\alpha := \smash{\sum_{i=1}^k a_i \alpha_i}$ be an integral simple closed multi-curve on $S_g$. Then, 
	\[
	\lim_{L \to \infty} \frac{sq(\alpha,L)}{L^{6g-6}} = \frac{\epsilon_{g} \cdot c(\alpha)}{2^{2g-3}}.
	\]
\end{theorem}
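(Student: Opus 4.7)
\emph{Proof proposal.} The plan is to follow the same ergodic template as in the proof of Theorem~\ref{theo:main_5}, now applied jointly to \emph{both} core multi-curves of a square-tiled surface. The first step is combinatorial: every square-tiled surface is uniquely determined, up to cut-and-paste equivalence, by its pair $(\alpha_h,\alpha_v)\in\mathcal{ML}_g(\mathbf{Z})\times\mathcal{ML}_g(\mathbf{Z})$ of horizontal and vertical core multi-curves, subject to the requirement that the pair be filling; moreover $i(\alpha_h,\alpha_v)$ equals the number of squares. Conversely, every filling integer pair arises this way, with generic automorphism group of order $\epsilon_g$ inside $\mathrm{Mod}_g$. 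This allows us to rewrite $sq(\alpha,L)$ as a weighted count of $\mathrm{Mod}_g$-orbits of filling pairs $(\alpha',\beta')$ with $\alpha'\in\mathrm{Mod}_g\cdot\alpha$, $\beta'\in\mathcal{ML}_g(\mathbf{Z})$, and $i(\alpha',\beta')\leq L$, the weighting being given by the reciprocals of the (generically trivial-modulo-$\epsilon_g$) automorphism groups.

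Second, I would unfold one copy of the $\mathrm{Mod}_g$-action on pairs and rescale both multi-curves by $t:=\sqrt{L}$, so that the constraint $i(\alpha',\beta')\leq L$ becomes $i(\alpha'/t,\beta'/t)\leq 1$. Writing
\[
\sigma_t := t^{-(6g-6)}\!\!\sum_{\alpha'\in\mathrm{Mod}_g\cdot\alpha}\!\!\delta_{\alpha'/t}, \qquad
\tau_t := t^{-(6g-6)}\!\!\sum_{\beta\in\mathcal{ML}_g(\mathbf{Z})}\!\!\delta_{\beta/t}
\]
for the two rescaled counting measures of Exercises~\ref{ex:equid} and~\ref{ex:thu_1}, this rewrites the rescaled count as
\[
\frac{sq(\alpha,L)}{L^{6g-6}} \; = \; C_g\iint_{\mathcal{ML}_g\times\mathcal{ML}_g}\!\mathbf{1}[i(\lambda,\mu)\le 1]\, d\sigma_t(\lambda)\, d\tau_t(\mu)\;+\;o(1),
\]
where $C_g$ is a purely combinatorial constant extracted from the unfolding and the generic automorphism count. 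By those two exercises, $\sigma_t\to c(\alpha)\mu_{\mathrm{Thu}}$ and $\tau_t\to \mu_{\mathrm{Thu}}$ in the weak-$\star$ topology, and the boundary $\{i(\lambda,\mu)=1\}$ is $\mu_{\mathrm{Thu}}\!\otimes\!\mu_{\mathrm{Thu}}$-null by the homogeneity argument of Exercise~\ref{eq:boundary}. Passing to the limit by a Portmanteau-type argument would then identify the right-hand side with $C_g\cdot c(\alpha)\cdot\iint\mathbf{1}[i(\lambda,\mu)\le 1]\, d\mu_{\mathrm{Thu}}(\lambda)\, d\mu_{\mathrm{Thu}}(\mu)$, and a Fubini argument combined with Mirzakhani's integration formula (Exercise~\ref{ex:mir_int}) evaluates the double integral, ultimately producing the claimed factor $\epsilon_g/2^{2g-3}$.

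The principal obstacle is the tail of the double sum: the region $\{(\lambda,\mu):i(\lambda,\mu)\leq 1\}\subseteq\mathcal{ML}_g\times\mathcal{ML}_g$ is \emph{not} relatively compact, since either coordinate can escape to infinity provided the other shrinks correspondingly. A genuine uniform tail estimate, in the spirit of Exercise~\ref{ex:int_strong}, is needed to upgrade the weak-$\star$ convergence above (which a priori only controls integrals against compactly supported continuous functions) to convergence against the unbounded indicator. Such a bound can be extracted by combining the coarse length comparison of Proposition~\ref{prop:ength_comp} with the integrability of the Mirzakhani function $B$ against $\widehat{\mu}_{\mathrm{wp}}$ (Exercise~\ref{ex:int}), which localizes the bulk of the joint counting measure to a compact region where the equidistribution applies directly. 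A secondary technical point is the careful combinatorial bookkeeping needed in the very first step to identify $C_g$ with $\epsilon_g/2^{2g-3}$; the factor $2^{2g-3}$ is reminiscent of the correction factors $2^{-\rho_g(\gamma)}$ appearing in Theorem~\ref{theo:cut_glue_fib}, and should emerge from an analogous analysis applied simultaneously to both multi-curves of the filling pair.
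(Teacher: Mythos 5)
The survey does not prove Theorem~\ref{theo:square_tiled}; it is stated and attributed to \cite{DGZZ19} and \cite{Ara19a}, so there is no in-paper argument to compare against. Evaluated on its own terms, your sketch has the right opening idea (encode a square-tiled surface by its filling pair of core multi-curves, with $i(\alpha_h,\alpha_v)$ equal to the number of squares, and appeal to Mirzakhani's results), but it contains a fatal error at the central step: the displayed identity equating $sq(\alpha,L)/L^{6g-6}$ (a finite quantity) with the double integral of $\mathbf{1}[i(\lambda,\mu)\le 1]$ against $\sigma_t\otimes\tau_t$ fails because the right-hand side is $+\infty$ for every $L$. Unwinding definitions, that double integral equals $L^{-(6g-6)}\cdot\#\{(\alpha',\beta)\in(\mcg\cdot\alpha)\times\ml(\mathbf{Z}):i(\alpha',\beta)\le L\}$; but if $(\alpha_0',\beta_0)$ is any pair with $i\le L$, so is every diagonal translate $(\phi\cdot\alpha_0',\phi\cdot\beta_0)$ with $\phi\in\mcg$, and since the stabilizer of a filling pair is finite while $\mcg$ is not, each $\mcg$-orbit of pairs already contributes infinitely many terms. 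The divergence is therefore built in at the moment you unfold the first factor to the full orbit $\mcg\cdot\alpha$ while leaving the second factor unquotiented by any group. Consistently, the would-be limit $c(\alpha)\cdot(\mu_{\mathrm{Thu}}\otimes\mu_{\mathrm{Thu}})\left(\{i\le 1\}\right)$ is infinite: both $\{i\le 1\}$ and $\mu_{\mathrm{Thu}}\otimes\mu_{\mathrm{Thu}}$ are invariant under $(\lambda,\mu)\mapsto(t\lambda,t^{-1}\mu)$, so that set has product measure $0$ or $\infty$, and it is plainly not null. This is not the ``tail'' problem you flag at the end, and no uniform integrability estimate in the spirit of Exercise~\ref{ex:int_strong} can rescue it: there is no compact region carrying the bulk of the mass, because the mass is spread uniformly along the scaling orbits above.

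The correct route keeps a $\mcg$-quotient in place. A jointly filling pair $(\alpha',\beta')$ determines a square-tiled half-translation surface with $i(\alpha',\beta')$ squares, and the $\mcg$-orbits of such pairs are exactly the lattice points in the relevant stratum of the moduli space of quadratic differentials. The count $sq(\alpha,L)$ is then a lattice-point count over that moduli space subject to a constraint on the topological type of the horizontal core multi-curve, and its asymptotics are controlled by a Masur--Veech volume, which Mirzakhani's integration formula relates to $c(\alpha)$ and $b_g$. The constant $2^{2g-3}$ is a purely genus-dependent normalization arising from comparing the Thurston measure with the Masur--Veech (period-coordinate) measure on the stratum; it is not of the same nature as the cut-and-glue correction $2^{-\rho_g(\gamma)}$ of Theorem~\ref{theo:cut_glue_fib}, which depends on the topological type of the cut and hence could never yield a factor that is independent of $\alpha$.
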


One can also consider more refined counting functions of square-tiled surfaces. Let $\alpha:= \smash{\sum_{i=1}^k \alpha_i \alpha_i}$ and $\beta := \smash{\sum_{j=1}^l b_j \beta_j}$ be integral simple closed multi-curves on $S_g$. For every $L > 0$ denote
\[
sq(\alpha,\beta,L) := \#
\left\lbrace 
\begin{array}{c}
\text{square-tiled surfaces with horizontal core multi-curve of }\\
\text{the same topological type as $\alpha$ and  vertical core multi-curve}\\
\text{ of the same topological type as $\beta$ and $\leq L$ squares} 
\end{array}
\right\rbrace /\sim,
\]
where $\sim$ denotes the equivalence relation induced by cut and paste operations.

\begin{theorem}
	\label{theo:square_tiled_2}
	Let $\alpha := \smash{\sum_{i=1}^k a_i \alpha_i}$ and $\beta := \smash{\sum_{j=1}^l b_j \beta_j}$ be integral simple closed multi-curves on $S_g$. Then, the following asymptotic formula holds,
	\[
	\lim_{L \to \infty} \frac{sq(\alpha,\beta,L)}{L^{6g-6}} = \frac{\epsilon_{g} \cdot c(\alpha) \cdot c(\beta)}{2^{2g-3}}.
	\]
\end{theorem}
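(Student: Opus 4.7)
The strategy parallels the proof of Theorem \ref{theo:square_tiled}, with both the horizontal and vertical cylinder decomposition types constrained. First I would carry out the combinatorial reduction: a square-tiled surface with at most $L$ squares, horizontal core multi-curve of type $\alpha$, and vertical core multi-curve of type $\beta$ is determined up to cut-and-paste equivalence by the pair $(\alpha', \beta')$ of horizontal and vertical core multi-curves (as integral simple closed multi-curves weighted by the cylinder heights), together with a finite twist choice per cylinder whose range is bounded by the cylinder width. The total number of squares equals $i(\alpha', \beta')$. Matching cut-and-paste equivalence with the diagonal $\mcg$-action on pairs and reproducing the $\epsilon_g/2^{2g-3}$ prefactor exactly as in the proof of Theorem \ref{theo:square_tiled} yields
\[
sq(\alpha, \beta, L) = \frac{\epsilon_g}{2^{2g-3}} \cdot \#\bigl\{(\alpha', \beta') \in \mcg \cdot \alpha \times \mcg \cdot \beta : i(\alpha', \beta') \leq L\bigr\}\big/\mcg^{\mathrm{diag}}.
\]

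Next I would estimate this diagonal pair count. Fix $\alpha$ as the representative of its $\mcg$-orbit; then diagonal orbits of pairs correspond to $\mathrm{Stab}(\alpha)$-orbits on $\{\beta' \in \mcg \cdot \beta : i(\alpha, \beta') \leq L\}$. Only jointly-filling pairs $(\alpha', \beta')$ correspond to honest square-tiled surfaces, and this filling condition tames the otherwise infinite $\mathrm{Stab}(\alpha)$-orbits (Dehn twists along components of $\alpha$ get absorbed into cylinder twists that only live modulo the width). Applying the length-function generalization of Exercise \ref{ex:main_6} stated at the beginning of $\S 6$, with $\ell(\lambda) := i(\alpha, \lambda)$ and $\gamma := \beta$, gives
\[
\#\bigl\{\beta' \in \mcg \cdot \beta : i(\alpha, \beta') \leq L\bigr\} \sim \frac{c(\beta)}{b_g} \cdot \mu_{\mathrm{Thu}}\bigl(\{\lambda \in \ml : i(\alpha, \lambda) \leq 1\}\bigr) \cdot L^{6g-6}.
\]

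The final step is a dual Mirzakhani-type identification of the remaining Thurston mass with $b_g \cdot c(\alpha)$ times the appropriate stabilizer factor, so that the $b_g$'s cancel and the symmetric product $c(\alpha) c(\beta)$ emerges. The frequency $c(\alpha)$ admits two equivalent descriptions: on one hand, as the leading coefficient of the Weil-Petersson average $L^{-(6g-6)} \int_{\mathcal{M}_g} s(X, \alpha, L)\, d\widehat{\mu}_{\mathrm{wp}}(X)$ in the definition $(\ref{eq:freq})$, and on the other, via the Thurston mass $\mu_{\mathrm{Thu}}(\{\lambda : i(\alpha, \lambda) \leq 1\})/|\mathrm{Stab}(\alpha)|$ normalized by $b_g$. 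This duality follows by applying Exercise \ref{ex:mir_int} with the roles of the hyperbolic structure $X \in \mathcal{M}_g$ and the integer multi-curve $\alpha$ interchanged: both sides compute the leading coefficient of the same double sum by different routes. Substituting the resulting identity into the previous display yields the claimed $\epsilon_g c(\alpha) c(\beta)/2^{2g-3}$.

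\textbf{Main obstacle.} The most delicate part is the dual identification in the final step, particularly when $\alpha$ is not filling on its own (the generic case for simple closed multi-curves). Then $i(\alpha, \cdot)$ vanishes along positive-codimension directions in $\ml$ and the naive Thurston mass $\mu_{\mathrm{Thu}}(\{i(\alpha, \cdot) \leq 1\})$ is infinite. Restricting the pair count to jointly-filling pairs cures the divergence, but one must verify that the non-jointly-filling contributions are genuinely lower order in $L$, and that the stabilizer and cylinder-twist accounting conspire to produce precisely $c(\alpha)$ rather than a spurious stabilizer-dependent constant. A cleaner alternative I would pursue in parallel is to integrate the diagonal pair count against $\widehat{\mu}_{\mathrm{wp}}$ on $\mathcal{M}_g$ by two applications of Mirzakhani's integration formula (Exercise \ref{ex:mir_int}) — one per factor of the pair — converting $sq(\alpha, \beta, L)$ into a double polynomial integral whose leading $L^{6g-6}$ coefficient can be matched directly against $\epsilon_g c(\alpha) c(\beta)/2^{2g-3}$.
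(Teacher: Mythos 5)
The paper does not actually prove Theorem~\ref{theo:square_tiled_2}; it states the result and refers to \cite{DGZZ19} and \cite{Ara19a}. So your proposal can only be assessed on its own merits, and it has a genuine gap at its central step. In slicing the diagonal orbit count by fixing $\alpha$, you count $\beta'\in\mcg\cdot\beta$ with $i(\alpha,\beta')\leq L$ and then invoke the general-length-function asymptotic from the start of \S 6 with $\ell(\cdot):=i(\alpha,\cdot)$. But that result requires $\ell\colon\ml\to\mathbf{R}_+$, a homogeneous function that is strictly positive on $\ml\setminus\{0\}$. Since $\alpha$ is a simple closed multi-curve it is never filling, so $i(\alpha,\cdot)$ vanishes on a positive-dimensional cone in $\ml$, and both sides of your display are infinite: on the left, Dehn twists along the components of $\alpha$ preserve $i(\alpha,\beta')$ while acting with infinite orbit on $\beta'$, so the set you are counting is not finite; on the right, $\mu_{\mathrm{Thu}}(\{\lambda\in\ml : i(\alpha,\lambda)\leq 1\})$ diverges by scaling in the directions where $i(\alpha,\cdot)=0$. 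Your ``Main obstacle'' paragraph correctly diagnoses exactly this, but diagnosing the obstacle is not the same as overcoming it; as written the key display equates two divergent quantities and yields no information about the finite count you actually need.

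Two further issues. The assertion that a square-tiled surface is determined up to cut-and-paste by the pair $(\alpha',\beta')$ \emph{together with} a finite per-cylinder twist choice overcounts: the vertical core multi-curve already records the twist of every horizontal cylinder, so $(\alpha',\beta')$ alone determines the surface up to cut-and-paste; your reduction formula happens to land in the right form, but the reasoning producing it is off. And the ``dual identification'' in your final step---matching a regularized $\mu_{\mathrm{Thu}}(\{i(\alpha,\cdot)\leq 1\})$ with $b_g\cdot c(\alpha)$ modulo stabilizer factors---is a substantive lemma in its own right, not something that falls out of ``Exercise~\ref{ex:mir_int} with the roles interchanged''; the $\mathrm{Stab}(\alpha)$-quotient required to make the left side finite must be carefully matched with the twist factors in the fiber of Mirzakhani's unfolding, and you do not carry this out. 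Your ``cleaner alternative''---two applications of Mirzakhani's integration formula---is closer in spirit to the argument of \cite{Ara19a}, but it is only gestured at, not executed.
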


Theorems \ref{theo:square_tiled} and \ref{theo:square_tiled_2} were originally proved by Delecroix, Goujard, Zograf, and Zorich using algebro-geometric methods \cite{DGZZ19}. Different proofs of these theorems were later provided in \cite{Ara19a}. These proofs made crucial use of the results of Mirzakhani discussed in this survey.

\subsection*{Individual components of multi-curves.} An ordered simple closed multi-curve on $S_g$ is a tuple $\gamma := (\gamma_1,\dots,\gamma_k)$ with $1 \leq k \leq 3g-3$ of pairwise non-isotopic and non-intersecting simple closed curves. Two ordered simple closed multi-curves on homeomorphic surfaces have the same topological type if there exists a homeomorphism between the surfaces mapping one multi-curve to the other respecting the orders. Multi-geodesics on hyperbolic surfaces are multi-curves all of whose components are geodesics. Inspired by Mirzakhani's simple close geodesic counting theorems, Wolpert conjectured that analogous results should hold for countings of simple closed multi-geodesics that keep track of the hyperbolic length of individual components, rather than just the total hyperbolic length. 

For instance, let $X \in \mathcal{M}_g$ be a closed, connected, oriented hyperbolic surface of genus $g \geq 2$ and $\gamma := (\gamma_1,\dots,\gamma_k)$ be an ordered simple closed multi-curve on $S_g$ with $1 \leq k \leq 3g-3$ components. For every $L > 0$ consider the counting function 
\begin{align}
m(X,\gamma,L) \label{eq:max_count}
:= \# \left\lbrace
\begin{array}{c}
\text{ordered simple closed multi-geodesics $\alpha :=(\alpha_1,\dots,\alpha_k)$ on $X$} \\ \text{of the same topological type as $\gamma$ with $\max_{i=1,\dots,k} \ell_{\alpha_i}(X) \leq L$} \hspace{-0.1cm} \nonumber
\end{array}
\right\rbrace.
\end{align}

\begin{exercise}
	\label{ex:freq_max}
	Let $\gamma := (\gamma_1,\dots,\gamma_k)$ be an ordered simple closed multi-curve on $S_g$. Using Mirzakhani's integration formulas give an explicit expression for
	\begin{equation*}
	\label{eq:Pmax}
	M(\gamma,L) := \int_{\mathcal{M}_g} m(X,\gamma,L) \thinspace d\widehat{\mu}_\mathrm{wp}(X) 
	\end{equation*}
	in terms of Weil-Petersson volume polynomials and conclude that $M(\gamma,L)$ is a polynomial in $L$ of degree $6g-6$. \textit{Hint: Follow the same approach as in Exercise \ref{ex:freq}.}
\end{exercise}

Following Exercise \ref{ex:freq_max} we define 
\[
m(\gamma) := \lim_{L \to \infty} \frac{M(\gamma,L)}{L^{6g-6}}.
\]

\begin{theorem}
	\label{theo:count_max}
	Let $X \in \mathcal{M}_g$ be a hyperbolic structure on $S_g$ and $\gamma := (\gamma_1,\dots,\gamma_k)$ with $1 \leq k \leq 3g-3$ be an ordered simple closed multi-curve on $S_g$. Then, the following asymptotic formula holds,
	\[
	\lim_{L \to \infty} \frac{m(X,\gamma,L)}{L^{6g-6}} = \frac{m(\gamma) \cdot B(X)}{b_g}.
	\]
\end{theorem}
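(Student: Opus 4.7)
The plan is to mimic the measure-theoretic scheme of Exercise \ref{ex:main_6}, replacing the scalar length function $\ell_\lambda(X)$ with the vector-valued length function $\vec{\ell}_\alpha(X) := (\ell_{\alpha_i}(X))_{i=1}^k$, and the counting measures on $\ml$ with counting measures on $\mathbf{R}_+^k$. Concretely, for each $L>0$ I would define
\[
\sigma_{X,L}^\gamma := \frac{1}{L^{6g-6}} \sum_{\alpha \in \mcg \cdot \gamma} \delta_{L^{-1}\vec{\ell}_\alpha(X)}
\]
on $\mathbf{R}_+^k$. Since $\vec{\ell}_\alpha(X)$ depends on $X$ only through its $\mcg$-orbit, $\sigma_{X,L}^\gamma$ descends to $\mathcal{M}_g$, and it satisfies the central identity
\[
\sigma_{X,L}^\gamma([0,1]^k) = \frac{m(X,\gamma,L)}{L^{6g-6}},
\]
recasting the counting problem as a weak-$\star$ convergence problem on $\mathbf{R}_+^k$.

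The main task is to prove that for every $X \in \mathcal{M}_g$ the sequence $(\sigma_{X,L}^\gamma)_{L>0}$ converges in the weak-$\star$ topology to
\[
\sigma_X^\gamma := \frac{B(X)}{b_g}\cdot V_g^{\mathrm{top}}(\gamma,\mathbf{x})\cdot \mathbf{x}\cdot d\mathbf{x},
\]
where $V_g^{\mathrm{top}}(\gamma,\cdot)$ is the top-degree homogeneous part of Mirzakhani's polynomial $V_g(\gamma,\cdot)$, which has degree $6g-6-2k$ in $\mathbf{x}$ by an Euler-characteristic count. The equidistribution is established by the four-step template of Theorem \ref{theo:equid_lat} and Exercise \ref{ex:equid}: (i) \emph{tightness} of $(\sigma_{X,L}^\gamma)_{L>0}$, from a vector-valued upgrade of Exercise \ref{ex:int_strong} yielding $\sigma_{X,L}^\gamma(K) \leq C(K)\cdot u(X)$ uniformly in $L$ on every compact $K \subset \mathbf{R}_+^k$; (ii) \emph{absolute continuity} of every weak-$\star$ subsequential limit with respect to $V_g^{\mathrm{top}}(\gamma,\mathbf{x})\,\mathbf{x}\,d\mathbf{x}$, deduced by lifting back to $\ml$ and invoking the ergodicity of $\mu_{\mathrm{Thu}}$ (Theorem \ref{theo:thu_erg}) in Dehn-Thurston coordinates; (iii) the Radon-Nikodym density is therefore a nonnegative constant $c(X)$ depending only on $X$; (iv) integrating the equidistribution identity against $d\widehat{\mu}_{\mathrm{wp}}$, using dominated convergence and matching with the explicit Mirzakhani integral of Exercise \ref{ex:freq_max}, fixes $c(X)=B(X)/b_g$.

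Theorem \ref{theo:count_max} then follows immediately. The cube $[0,1]^k$ is compact with Lebesgue-null boundary, so Portmanteau (Theorem \ref{theo:port_1}) gives
\[
\lim_{L\to\infty}\frac{m(X,\gamma,L)}{L^{6g-6}} = \sigma_X^\gamma([0,1]^k) = \frac{B(X)}{b_g}\int_{[0,1]^k} V_g^{\mathrm{top}}(\gamma,\mathbf{x})\cdot \mathbf{x}\cdot d\mathbf{x} = \frac{B(X)\cdot m(\gamma)}{b_g},
\]
the last identity coming from Exercise \ref{ex:freq_max} after the substitution $\mathbf{x}=L\mathbf{y}$ and extraction of the coefficient of $L^{6g-6}$ in the resulting polynomial $M(\gamma,L)$.

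The hard part will be step (ii) of the equidistribution. In the scalar setting of Exercise \ref{ex:equid}, the counting measures live on $\ml$ and their weak-$\star$ limits are automatically $\mcg$-invariant, so ergodicity of $\mu_{\mathrm{Thu}}$ at once forces them to be multiples of $\mu_{\mathrm{Thu}}$. Here $\sigma_{X,L}^\gamma$ lives on $\mathbf{R}_+^k$ and mapping-class-group invariance is broken by the $X$-dependent length map. The resolution is to construct an auxiliary $\mcg$-invariant counting measure on $\ml$ whose pushforward under a vector-length functional recovers $\sigma_{X,L}^\gamma$; ergodicity on $\ml$ then forces the lifted weak-$\star$ limits to be multiples of $\mu_{\mathrm{Thu}}$, and the cut-and-glue fibration of Theorem \ref{theo:cut_glue_fib} together with the scaling relation (\ref{eq:thu_meas_scale}) identifies the pushed-forward density as precisely $V_g^{\mathrm{top}}(\gamma,\mathbf{x})\cdot\mathbf{x}$.
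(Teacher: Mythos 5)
The survey does not prove Theorem \ref{theo:count_max} in the text; it only cites \cite{Ara20a} and \cite{Liu19}, which argue by Margulis-style averaging and unfolding on the intermediate cover $\mathcal{T}_g/\mathrm{Stab}_0(\gamma) \to \mathcal{M}_g$, not by equidistribution of counting measures on $\ml$. So the comparison is necessarily indirect, but the shape of the argument that is known to work is quite different from what you sketch.

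Your proposal has a genuine gap at precisely the step you yourself isolate as hard, step (ii). The vector length $\vec{\ell}_\alpha(X)$ is defined only when $\alpha$ is an \emph{ordered} multi-curve; it is not defined on $\ml$, nor on any $\mu_{\mathrm{Thu}}$-full subset of $\ml$. A Thurston-generic measured lamination is minimal and filling: it is not a multi-curve, carries no ordered $k$-tuple of components, and hence has no associated $k$-tuple of lengths. There is therefore no map $\ml \to \mathbf{R}_+^k$ whose pushforward of the $\mcg$-invariant counting measure $\mu_L^\gamma$ recovers your $\sigma_{X,L}^\gamma$; the vector length makes sense only along the discrete, $\mu_{\mathrm{Thu}}$-null orbit. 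Consequently ``lifting back to $\ml$ and invoking ergodicity of $\mu_{\mathrm{Thu}}$'' gives no control whatsoever over the weak-$\star$ limits of $\sigma_{X,L}^\gamma$ on $\mathbf{R}_+^k$. This failure then propagates to step (iv): unlike the scalar case, $\sigma_{X,L}^\gamma$ depends on $X$ from the outset, so the constant $c(X)$ in (iii) genuinely varies with $X$, and integrating against $d\widehat{\mu}_{\mathrm{wp}}$ produces only one number, $\int c(X)\,d\widehat{\mu}_{\mathrm{wp}}(X)$, which cannot determine $c(X)$ pointwise. (In Exercise \ref{ex:main_6} the measure $\mu_L^\gamma$ on $\ml$, and hence its limit, is $X$-independent; $X$ enters only through the test set $B_X$, which is exactly why a single averaged identity suffices there.) The limit measure you write down is the correct one, but identifying it forces a different device than $\ml$-ergodicity: either the unfolding argument of the cited works, or a Cram\'er--Wold-type argument feeding the scalar Exercise \ref{ex:main_6} applied to a spanning family of weight vectors $\mathbf{a}$ through the determinacy of a locally finite, $6g-6$-homogeneous measure on the cone $\mathbf{R}_+^k$ by its values on the half-spaces $\{\mathbf{a}\cdot\mathbf{x}\le 1\}$.
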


Theorem \ref{theo:count_max} was proved independently by the author \cite{Ara20a} and Liu \cite{Liu19} using general averaging and unfolding methods introduced by Margulis in his thesis \cite{Mar04}. In  \cite{Ara20a} a generalization of Theorem \ref{theo:count_max} for ordered filling closed multi-curves was proved using techniques introduced by Mirzakhani in \cite{Mir16}. A result analogous to Theorem \ref{theo:count_max} for all topological types of closed curves has since been proved by Erlandsson and Souto using original methods \cite{ES20}.

\subsection*{Effective results.} The counting results discussed above only provide asymptotic estimates without explicit error terms. The search for effective estimates for counting problems of simple closed geodesics has only seen progress in recent years. In \cite{EMM19}, Eskin, Mirzakhani, and Mohammadi introduced new methods fundamentally based on Teichmüller dynamics to prove the following effective version of Mirzakhani's simple closed geodesic counting theorem.

\begin{theorem}
	\cite{EMM19}
	\label{theo:EMM}
	Let $X \in \mathcal{M}_g$ be a hyperbolic structure on $S_g$ and $\gamma := \sum_{i=1}^k a_i \gamma_i$ be an integral simple closed multi-curve on $S_g$. Then, for every $L > 0$,
	\[
	s(X,\gamma,L) = \frac{c(\gamma) \cdot B(X)}{b_g} \cdot L^{6g-6} + O_{X,\gamma}\left(L^{6g-6-\kappa}\right),
	\]
	where $\kappa = \kappa(g) > 0$ is a positive constant depending only on the genus $g \geq 2$.
\end{theorem}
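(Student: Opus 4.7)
The plan is to promote the qualitative equidistribution arguments in the proof of Exercise \ref{ex:main_6} to quantitative ones, using Teichmüller dynamics to produce the required power-saving. First, I would reduce the counting statement to an effective equidistribution statement for the rescaled counting measures $\mu^\gamma_L$ on $\ml$. More precisely, I would aim to show that for suitable smooth, compactly supported test functions $f \colon \ml \to \mathbf{R}$,
\[
\int_{\ml} f \thinspace d\mu^\gamma_L = \frac{c(\gamma)}{b_g} \cdot \int_{\ml} f \thinspace d\mu_{\mathrm{Thu}} + O_{f,\gamma}\left(L^{-\kappa}\right),
\]
and then apply this with $f$ chosen as a smooth approximation to the indicator function of $B_X$. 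Passing from the smoothed count to the exact count $s(X,\gamma,L) = L^{6g-6} \cdot \mu^\gamma_L(B_X)$ requires a standard thickening argument together with a quantitative bound on the Thurston measure of an $\epsilon$-neighborhood of $\partial B_X$; such a bound follows from the homogeneity property in (\ref{eq:thu_meas_scale}) and the regularity of the length function $\ell_\cdot(X)$ on $\ml$.

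The heart of the argument is the effective equidistribution step, and this is where Teichmüller dynamics enters. An integral simple closed multi-curve $\alpha \in \mcg \cdot \gamma$ of small hyperbolic length on $X$ corresponds, via its realization as the horizontal foliation of a holomorphic quadratic differential on $X$, to a mapping class group orbit point in a stratum of the moduli space of quadratic differentials whose horocycle orbit is short. Under this dictionary, counting elements of $\mcg \cdot \gamma$ of hyperbolic length at most $L$ becomes counting intersections of a mapping class group orbit with a dilated region in the stratum, and the rescaling $\lambda \mapsto L^{-1} \cdot \lambda$ on $\ml$ corresponds to applying the Teichmüller geodesic flow for time $t = \log L$. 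The desired effective equidistribution on $\ml$ is therefore equivalent to an effective equidistribution statement for the pushforwards under Teichmüller flow of certain measures supported on $\mathrm{SO}(2,\mathbf{R})$-orbits inside the stratum.

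The key analytic input is then exponential mixing of the Teichmüller geodesic flow on strata with respect to the Masur--Veech measure, due in this setting to Avila--Gouëzel--Yoccoz. Combined with a thickening of $\mathrm{SO}(2,\mathbf{R})$-orbits into short pieces of unstable manifolds, exponential mixing yields effective equidistribution of circle orbits at rate $e^{-\kappa t}$ for some $\kappa = \kappa(g) > 0$ depending only on the spectral gap. Translating this back through the lamination/differential dictionary produces the power-saving error term in Theorem \ref{theo:EMM}, with the implicit constant absorbing Sobolev-type norms of the test functions and the position of $X$ relative to the thick part of $\mathcal{M}_g$. Matching the leading coefficient produced by this dynamical argument against the prediction $c(\gamma) \cdot B(X)/b_g$ of Exercise \ref{ex:main_6} is essentially automatic once one identifies the Masur--Veech measure in period coordinates with the Thurston measure on the foliation factor.

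The main obstacle is the non-compactness of strata of quadratic differentials. Exponential mixing requires controlled decay of test functions at infinity, and more seriously one must prevent too many of the relevant orbits from spending disproportionately long stretches of time in the cusps, where the mixing rate deteriorates. This demands an effective non-divergence estimate for the Teichmüller geodesic flow in the spirit of Eskin--Masur, uniform over the initial data lying on the circle orbits that arise from the dictionary above. Packaging this non-divergence bound together with exponential mixing to obtain equidistribution of circle orbits uniformly over a sufficiently large compact piece of the stratum, and propagating the resulting error term cleanly through the reduction to $\mu^\gamma_L(B_X)$, is the technically most delicate part of the proof.
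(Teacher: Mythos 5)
The survey does not itself prove Theorem~\ref{theo:EMM}; it only states it and cites \cite{EMM19}, describing the proof as ``fundamentally based on Teichm\"uller dynamics.'' So there is no proof in the paper to compare against step by step, but your sketch is a sensible reconstruction of the cited route and gets the key ingredients right: you correctly identify that the power saving should come from a quantitative form of the equidistribution in Exercise~\ref{ex:equid}; that the mechanism relating the lamination count to Teichm\"uller dynamics is the foliation/quadratic-differential dictionary, under which the rescaling $\lambda \mapsto L^{-1}\lambda$ becomes time-$\log L$ geodesic flow; that the analytic engine is exponential mixing of that flow (Avila--Gou\"ezel--Yoccoz for abelian, Avila--Resende for quadratic differentials); and that a quantitative non-divergence estimate in the spirit of Eskin--Masur is needed to control the cusps of the strata. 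You also handle the passage from smooth test functions to the sharp cutoff $B_X$ correctly, by appealing to the scaling property (\ref{eq:thu_meas_scale}) as in Exercise~\ref{eq:boundary}. As a reassuring cross-check, the normalization $c(\gamma)/b_g$ in your effective equidistribution statement is the correct one; the statement of Exercise~\ref{ex:equid} in the survey appears to be missing the factor $1/b_g$, as one can verify by integrating against $\widehat{\mu}_{\mathrm{wp}}$ over $\mathcal{M}_g$ and comparing with the definition of $c(\gamma)$ in (\ref{eq:freq}).

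The one place you should be more careful is your formulation of the dynamical input as ``effective equidistribution of pushforwards under Teichm\"uller flow of measures supported on $\mathrm{SO}(2,\mathbf{R})$-orbits.'' Identifying precisely which dynamical object equidistributes, and how its time-$t$ pushforward is related to the counting measure $\mu_L^\gamma$, is itself a nontrivial part of \cite{EMM19}: it requires a careful choice of transversal and conditional-measure structure adapted to the flow, not simply a circle average, and propagating the mixing rate through that reduction is where most of the work lies. The phrase ``certain measures supported on $\mathrm{SO}(2,\mathbf{R})$-orbits'' therefore papers over a genuine piece of the argument. This is a matter of incompleteness in the sketch rather than a wrong step, but it is the part of the proposal that, if you tried to fill in all the details, would turn out to carry the bulk of the technical difficulty.
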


Even more recently, in \cite{Ara21a}, novel methods were introduced by the author to prove analogous effective estimates for countings of filling closed geodesics of a given topological type.  These methods rely on recent progress made in the prequels \cite{Ara20b} and \cite{Ara20c} on the study of the effective dynamics of the mapping class group on Teichmüller space and the space of closed curves of a closed, orientable surface. These recent developments in turn rely on the exponential mixing rate, the hyperbolicity, and the renormalization dynamics of the Teichmüller geodesic flow as their main driving forces.

\begin{theorem}
	\cite{Ara21a}
	\label{theo:me}
	Let $X \in \mathcal{M}_g$ be a hyperbolic structure on $S_g$ and $\gamma$ be a filling closed curve on $S_g$. Then, for every $L > 0$,
	\[
	f(X,\gamma,L) = \frac{c(\gamma) \cdot B(X)}{b_g} \cdot L^{6g-6} + O_{X,\gamma}\left(L^{6g-6-\kappa}\right),
	\]
	where $\kappa = \kappa(g) > 0$ is a positive constant depending only on the genus $g \geq 2$.
\end{theorem}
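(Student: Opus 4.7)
The plan is to adapt Mirzakhani's approach to Theorem \ref{theo:mir16} from \cite{Mir16}, which already handles the non-effective case for filling curves, and to upgrade each qualitative step into a quantitative one with polynomial error terms. Concretely, I would first reformulate $f(X,\gamma,L)$ as a lattice-point count in the space $\mathcal{C}_g$ of geodesic currents: viewing $X$ as its Liouville current $L_X$ and $\gamma$ as a filling current with finite $\mathrm{Stab}(\gamma)$, one has $f(X,\gamma,L) = \#\{\alpha \in \mcg\cdot\gamma \colon i(L_X,\alpha)\le L\}$. The Margulis-style unfolding trick reduces this to understanding, as $L\to\infty$, the mass that the pushed-forward orbit $\frac{1}{L^{6g-6}}\sum_{\alpha\in\mcg\cdot\gamma}\delta_{\alpha/L}$ places on the convex body $B_X = \{\lambda\in\ml \colon \ell_\lambda(X)\le 1\}$ in $\ml$, once we have smoothed the filling current $\gamma$ onto a compactly supported measure on $\ml$ via the geometric intersection pairing.

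The technical heart of the argument is an effective equidistribution statement for the $\mcg$-orbit of $\gamma$ in an appropriate quotient of $\ml$ (or of Teichmüller space via the earthquake flow). The idea is to replace the soft ergodicity inputs (Theorem \ref{theo:thu_erg} and the Banach--Alaoglu compactness argument of Proposition \ref{prop:prim_meas_1}) by the quantitative mixing of the Teichmüller geodesic flow on strata of quadratic differentials, which by Avila--Gouëzel enjoys exponential decay of correlations for Hölder observables. Through the Hubbard--Masur correspondence between quadratic differentials and pairs of transverse measured foliations, this mixing transfers to a power-saving error term for the equidistribution of large circles $\{\lambda\in\ml\colon \ell_\lambda(X)=R\}$ with respect to the Thurston measure. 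Feeding the resulting effective equidistribution into Mirzakhani's integration formula from Exercise \ref{ex:mir_int}, together with the fact that $P(\gamma,L)$ is a polynomial in $L$ of degree $6g-6$ with leading coefficient $c(\gamma)$, produces the main term $c(\gamma)B(X)L^{6g-6}/b_g$ plus a quantitative error.

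A delicate subsidiary step is handling the thin part of $\mathcal{M}_g$. The Mirzakhani function $B(X)$ and the counting function $s(X,\gamma,L)/L^{6g-6}$ are only controlled near infinity by the integrable but unbounded function $u(X)$ of Exercise \ref{ex:int_strong}; making the dominated convergence argument of Exercise \ref{ex:main_6} quantitative requires a tail estimate, of power-savings quality, for the Weil--Petersson mass of $\{u\ge T\}$ as $T\to\infty$. This can be done using Bers's theorem \ref{theo:bers} and the collar lemma \ref{lem:collar_lemma} to localize the thin part by short multi-curves, combined with an effective escape-to-the-cusp estimate for the Teichmüller flow of the type developed in \cite{Ara20b}, \cite{Ara20c}.

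I expect the principal obstacle to be the filling, non-simple nature of $\gamma$. For simple multi-curves (as in Theorem \ref{theo:EMM}) one pushes $\gamma$ directly in $\ml$ and the scaling symmetry $\mu_{\mathrm{Thu}}(t\cdot A)=t^{6g-6}\mu_{\mathrm{Thu}}(A)$ converts length bounds into dilations of a fixed set. For a filling $\gamma\in \mathcal{C}_g\setminus\ml$ this scaling no longer matches the orbit, and one must instead use the continuous bilinear intersection pairing $i(\cdot,\cdot)$ to reduce to an equidistribution statement for $\mcg\cdot\gamma$ tested against $L_X$; controlling this pairing effectively, uniformly in the topology of $\gamma$ and in the geometry of $X$, is what forces one to build a general effective mixing theory for the mapping class group action on $\ml$ rather than relying on the ad hoc smoothing techniques that suffice in the non-effective case of \cite{ES19}.
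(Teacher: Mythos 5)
The survey itself does not prove Theorem \ref{theo:me}: it is a cited result from \cite{Ara21a}, accompanied only by a one-sentence description of the toolkit (effective dynamics of the mapping class group on Teichm\"uller space and on the space of closed curves, developed in the prequels \cite{Ara20b,Ara20c}, with the exponential mixing, hyperbolicity, and renormalization dynamics of the Teichm\"uller geodesic flow as the driving mechanisms). Your outline is consistent with that description at a high level: you correctly identify Margulis-style averaging and unfolding as the replacement for the soft weak-$\star$ compactness arguments of \S 5, you correctly locate the source of effectivity in quantitative mixing of the Teichm\"uller flow transferred to $\mathcal{ML}_g$ via the Hubbard--Masur correspondence, and you correctly flag power-saving control in the thin part of $\mathcal{M}_g$ as a separate technical burden.

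That said, the opening reduction as you state it does not make sense. For a filling closed curve $\gamma$ the orbit $\mathrm{Mod}_g\cdot\gamma$ lives in $\mathcal{C}_g\setminus\mathcal{ML}_g$, so the measure $\frac{1}{L^{6g-6}}\sum_{\alpha\in\mathrm{Mod}_g\cdot\gamma}\delta_{\alpha/L}$ is a measure on $\mathcal{C}_g$, not on $\mathcal{ML}_g$, and it places no mass whatsoever on $B_X\subset\mathcal{ML}_g$; ``smoothing $\gamma$ onto a compactly supported measure on $\mathcal{ML}_g$ via the intersection pairing'' is not an operation that repairs this. You recover the right perspective in your final paragraph, where you note that one must dualize via $i(\cdot,\cdot)$ and test $\mathrm{Mod}_g\cdot\gamma$ against $L_X$ --- equivalently, follow Mirzakhani's scheme from \cite{Mir16} of averaging over $\mathrm{Mod}_g$-orbits of $X$ in $\mathcal{T}_g$ --- but as written your first two paragraphs set up a counting measure that does not exist. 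Beyond that correction the proposal remains a roadmap rather than a proof: each ingredient (effective equidistribution of level sets $\{\ell_\lambda(X)=R\}$ in $\mathcal{ML}_g$, the quantitative transfer from flow mixing via Hubbard--Masur, the tail estimate on $\{u \ge T\}$, and the uniformity of all these bounds over the non-simple orbit of $\gamma$) is itself a substantial theorem in \cite{Ara20b,Ara20c,Ara21a}, and none is stated precisely or argued in the sketch. The plan is sound in outline and matches the paper's pointer to the literature, but the gap between outline and proof here is essentially the entire content of \cite{Ara21a}.
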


The methods introduced in \cite{Ara21a} can also be used to prove an effective version of Rafi's and Souto's asymptotic estimate for the number of points in a mapping class group orbit of Teichmüller space that lie within a ball of given center and large radius with respect to Thurston's asymmetric metric. An analogous result for balls in the Teichmüller metric was proved by the author in \cite{Ara20b} building on previous work of Athreya, Bufetov, Eskin, and Mirzakhani \cite{ABEM12}.

\bibliographystyle{amsalpha}


\bibliography{bibliography}

\providecommand{\bysame}{\leavevmode\hbox to3em{\hrulefill}\thinspace}
\providecommand{\MR}{\relax\ifhmode\unskip\space\fi MR }
\providecommand{\MRhref}[2]{%
  \href{http://www.ams.org/mathscinet-getitem?mr=#1}{#2}
}
\providecommand{\href}[2]{#2}
\begin{thebibliography}{{Ara}21b}

\bibitem[ABEM12]{ABEM12}
Jayadev Athreya, Alexander Bufetov, Alex Eskin, and Maryam Mirzakhani,
  \emph{Lattice point asymptotics and volume growth on {T}eichm\"uller space},
  Duke Math. J. \textbf{161} (2012), no.~6, 1055--1111. \MR{2913101}

\bibitem[{Ara}19a]{Ara20a}
Francisco {Arana-Herrera}, \emph{{Counting multi-geodesics on hyperbolic
  surfaces with respect to the lengths of individual components}}, In
  preparation, 2019.

\bibitem[{Ara}19b]{Ara19a}
\bysame, \emph{{Counting square-tiled surfaces with prescribed real and
  imaginary foliations and connections to Mirzakhani's asymptotics for simple
  closed hyperbolic geodesics}}, arXiv e-prints (2019), arXiv:1902.05626.

\bibitem[{Ara}20]{Ara20b}
\bysame, \emph{{Effective mapping class group dynamics {I}: Counting lattice
  points in Teichm{\"u}ller space}}, arXiv e-prints (2020), arXiv:2010.03123.

\bibitem[{Ara}21a]{Ara20c}
\bysame, \emph{{Effective mapping class group dynamics II: Geometric
  intersection numbers}}, arXiv e-prints (2021), arXiv:2104.01694.

\bibitem[{Ara}21b]{Ara21a}
\bysame, \emph{{Effective mapping class group dynamics III: Counting filling
  closed curves on surfaces}}, arXiv e-prints (2021), arXiv:2106.11386.

\bibitem[Ber85]{Ber85}
Lipman Bers, \emph{An inequality for {R}iemann surfaces}, Differential geometry
  and complex analysis, Springer, Berlin, 1985, pp.~87--93. \MR{780038}

\bibitem[Bon88]{Bon88}
Francis Bonahon, \emph{The geometry of {T}eichm\"uller space via geodesic
  currents}, Invent. Math. \textbf{92} (1988), no.~1, 139--162. \MR{931208}

\bibitem[BS85]{BS85}
Joan~S. Birman and Caroline Series, \emph{Geodesics with bounded intersection
  number on surfaces are sparsely distributed}, Topology \textbf{24} (1985),
  no.~2, 217--225. \MR{793185}

\bibitem[Bus92]{Bus92}
Peter Buser, \emph{Geometry and spectra of compact {R}iemann surfaces},
  Progress in Mathematics, vol. 106, Birkh\"{a}user Boston, Inc., Boston, MA,
  1992. \MR{1183224}

\bibitem[DGZZ21]{DGZZ19}
Vincent Delecroix, \'{E}lise Goujard, Peter Zograf, and Anton Zorich,
  \emph{Masur-{V}eech volumes, frequencies of simple closed geodesics, and
  intersection numbers of moduli spaces of curves}, Duke Math. J. \textbf{170}
  (2021), no.~12, 2633--2718. \MR{4305379}

\bibitem[Do13]{Do13}
Norman Do, \emph{Moduli spaces of hyperbolic surfaces and their
  {W}eil-{P}etersson volumes}, Handbook of moduli. {V}ol. {I}, Adv. Lect. Math.
  (ALM), vol.~24, Int. Press, Somerville, MA, 2013, pp.~217--258. \MR{3184165}

\bibitem[EMM19]{EMM19}
Alex {Eskin}, Maryam {Mirzakhani}, and Amir {Mohammadi}, \emph{{Effective
  counting of simple closed geodesics on hyperbolic surfaces}}, arXiv e-prints
  (2019), arXiv:1905.04435.

\bibitem[ES16]{ES16}
Viveka Erlandsson and Juan Souto, \emph{Counting curves in hyperbolic
  surfaces}, Geom. Funct. Anal. \textbf{26} (2016), no.~3, 729--777.
  \MR{3540452}

\bibitem[ES19]{ES19}
Viveka {Erlandsson} and Juan {Souto}, \emph{{Mirzakhani's Curve Counting}},
  arXiv e-prints (2019), arXiv:1904.05091.

\bibitem[ES20]{ES20}
\bysame, \emph{Geodesic currents and {M}irzakhani's curve counting}, In
  preparation, 2020.

\bibitem[EU18]{EU18}
V.~{Erlandsson} and C.~{Uyanik}, \emph{{Length functions on currents and
  applications to dynamics and counting}}, ArXiv e-prints (2018).

\bibitem[FM12]{FM11}
Benson Farb and Dan Margalit, \emph{A primer on mapping class groups},
  Princeton Mathematical Series, vol.~49, Princeton University Press,
  Princeton, NJ, 2012. \MR{2850125}

\bibitem[Ker80]{Ker80}
Steven~P. Kerckhoff, \emph{The asymptotic geometry of {T}eichm\"{u}ller space},
  Topology \textbf{19} (1980), no.~1, 23--41. \MR{559474}

\bibitem[{Liu}19]{Liu19}
Mingkun {Liu}, \emph{{Length statistics of random multicurves on closed
  hyperbolic surfaces}}, arXiv e-prints (2019), arXiv:1912.11155.

\bibitem[LW21]{LW21}
Michael {Lipnowski} and Alex {Wright}, \emph{{Towards optimal spectral gaps in
  large genus}}, arXiv e-prints (2021), arXiv:2103.07496.

\bibitem[Mar70]{Mar04}
Grigoriy~A. Margulis, \emph{On some aspects of the theory of {A}nosov systems},
  Ph.D. Thesis, 1970, Springer-Verlag, Berlin, 2003. \MR{2035655}

\bibitem[{Mar}16]{Mar16}
Bruno {Martelli}, \emph{{An Introduction to Geometric Topology}}, ArXiv
  e-prints (2016).

\bibitem[Mas85]{Mas85}
Howard Masur, \emph{Ergodic actions of the mapping class group}, Proc. Amer.
  Math. Soc. \textbf{94} (1985), no.~3, 455--459. \MR{787893}

\bibitem[McS91]{Mc91}
Greg McShane, \emph{A remarkable identity for lengths of curves}, ProQuest LLC,
  Ann Arbor, MI, 1991, Thesis (Ph.D.)--University of Warwick (United Kingdom).
  \MR{3389436}

\bibitem[Mir04]{Mir04}
Maryam Mirzakhani, \emph{Simple geodesics on hyperbolic surfaces and the volume
  of the moduli space of curves}, ProQuest LLC, Ann Arbor, MI, 2004, Thesis
  (Ph.D.)--Harvard University. \MR{2705986}

\bibitem[Mir07a]{Mir07a}
\bysame, \emph{Simple geodesics and {W}eil-{P}etersson volumes of moduli spaces
  of bordered {R}iemann surfaces}, Invent. Math. \textbf{167} (2007), no.~1,
  179--222. \MR{2264808}

\bibitem[Mir07b]{Mir07c}
\bysame, \emph{Weil-{P}etersson volumes and intersection theory on the moduli
  space of curves}, J. Amer. Math. Soc. \textbf{20} (2007), no.~1, 1--23.
  \MR{2257394}

\bibitem[Mir08a]{Mir08a}
\bysame, \emph{Ergodic theory of the earthquake flow}, Int. Math. Res. Not.
  IMRN (2008), no.~3, Art. ID rnm116, 39. \MR{2416997}

\bibitem[Mir08b]{Mir08b}
\bysame, \emph{Growth of the number of simple closed geodesics on hyperbolic
  surfaces}, Ann. of Math. (2) \textbf{168} (2008), no.~1, 97--125.
  \MR{2415399}

\bibitem[Mir13]{Mir13}
\bysame, \emph{Growth of {W}eil-{P}etersson volumes and random hyperbolic
  surfaces of large genus}, J. Differential Geom. \textbf{94} (2013), no.~2,
  267--300. \MR{3080483}

\bibitem[{Mir}16]{Mir16}
M.~{Mirzakhani}, \emph{{Counting Mapping Class group orbits on hyperbolic
  surfaces}}, ArXiv e-prints (2016).

\bibitem[{Mon}20]{Mo20}
Laura {Monk}, \emph{{Benjamini-Schramm convergence and spectrum of random
  hyperbolic surfaces of high genus}}, arXiv e-prints (2020), arXiv:2002.00869.

\bibitem[MP19]{MP19}
Maryam Mirzakhani and Bram Petri, \emph{Lengths of closed geodesics on random
  surfaces of large genus}, Comment. Math. Helv. \textbf{94} (2019), no.~4,
  869--889. \MR{4046008}

\bibitem[MR95a]{MR95a}
Greg McShane and Igor Rivin, \emph{A norm on homology of surfaces and counting
  simple geodesics}, Internat. Math. Res. Notices (1995), no.~2, 61--69.
  \MR{1317643}

\bibitem[MR95b]{MR95b}
\bysame, \emph{Simple curves on hyperbolic tori}, C. R. Acad. Sci. Paris
  S\'{e}r. I Math. \textbf{320} (1995), no.~12, 1523--1528. \MR{1340065}

\bibitem[MT20]{MT20}
Laura {Monk} and Joe {Thomas}, \emph{{The tangle-free hypothesis on random
  hyperbolic surfaces}}, arXiv e-prints (2020), arXiv:2008.09363.

\bibitem[MZ15]{MZ15}
Maryam Mirzakhani and Peter Zograf, \emph{Towards large genus asymptotics of
  intersection numbers on moduli spaces of curves}, Geom. Funct. Anal.
  \textbf{25} (2015), no.~4, 1258--1289. \MR{3385633}

\bibitem[Ota90]{O90}
Jean-Pierre Otal, \emph{Le spectre marqu\'{e} des longueurs des surfaces \`a
  courbure n\'{e}gative}, Ann. of Math. (2) \textbf{131} (1990), no.~1,
  151--162. \MR{1038361}

\bibitem[PH92]{PH92}
R.~C. Penner and J.~L. Harer, \emph{Combinatorics of train tracks}, Annals of
  Mathematics Studies, vol. 125, Princeton University Press, Princeton, NJ,
  1992. \MR{1144770}

\bibitem[Riv01]{R01}
Igor Rivin, \emph{Simple curves on surfaces}, Geom. Dedicata \textbf{87}
  (2001), no.~1-3, 345--360. \MR{1866856}

\bibitem[RS19]{RS19}
Kasra Rafi and Juan Souto, \emph{Geodesic currents and counting problems},
  Geom. Funct. Anal. \textbf{29} (2019), no.~3, 871--889. \MR{3962881}

\bibitem[Sie45]{Si45}
Carl~Ludwig Siegel, \emph{A mean value theorem in geometry of numbers}, Ann. of
  Math. (2) \textbf{46} (1945), 340--347. \MR{12093}

\bibitem[Thu80]{T80}
William~P. Thurston, \emph{Geometry and topology of three-manifolds}, Lecture
  notes, Princeton University, 1980.

\bibitem[{Thu}98]{Thu98}
William~P. {Thurston}, \emph{{Minimal stretch maps between hyperbolic
  surfaces}}, arXiv Mathematics e-prints (1998), math/9801039.

\bibitem[Wol83]{Wol83}
Scott Wolpert, \emph{On the symplectic geometry of deformations of a hyperbolic
  surface}, Ann. of Math. (2) \textbf{117} (1983), no.~2, 207--234. \MR{690844}

\bibitem[Wol13]{Wol13}
Scott~A. Wolpert, \emph{Mirzakhani's volume recursion and approach for the
  {W}itten-{K}ontsevich theorem on moduli tautological intersection numbers},
  Moduli spaces of {R}iemann surfaces, IAS/Park City Math. Ser., vol.~20, Amer.
  Math. Soc., Providence, RI, 2013, pp.~221--266. \MR{3114687}

\bibitem[Wri20]{Wri19}
Alex Wright, \emph{A tour through {M}irzakhani's work on moduli spaces of
  {R}iemann surfaces}, Bull. Amer. Math. Soc. (N.S.) \textbf{57} (2020), no.~3,
  359--408. \MR{4108090}

\end{thebibliography}

\end{document}